\documentclass[11pt]{amsart}
\usepackage{amssymb}
\usepackage{mathrsfs}
\usepackage{cite}
\usepackage{amscd}
\usepackage{stmaryrd}

{\theoremstyle{definition} 
\newtheorem{dfn}{Definition}
\newtheorem{ex}{Example}
\newtheorem{q}{Question}}
\newtheorem{pro}{Proposition}
\newtheorem{lmm}{Lemma}
\newtheorem{thm}{Theorem}
\newtheorem{cor}{Corollary}

\newcommand{\HHi}{{\mathcal{H}{\rm ilb}}}
\newcommand{\Hi}{{{\rm Hilb}}}

\parskip=10pt plus 1pt
\parindent=0pt

\begin{document} 

\title{Gr\"obner strata in the Hilbert scheme of points}
\author{Mathias Lederer}
\email{mlederer@math.cornell.edu}
\address{Department of Mathematics, Cornell University, Ithaca, New York 14853, USA}
\date{January, 2011}
\keywords{Hilbert scheme of points, Gr\"obner bases, standard sets, locally closed strata}
\subjclass[2000]{14C05; 13F20; 13P10}

\begin{abstract} 
  The present paper shall provide a framework for working with Gr\"obner bases over arbitrary rings $k$
  with a prescribed finite standard set $\Delta$. 
  We show that the functor associating to a $k$-algebra $B$ the set of all reduced Gr\"obner bases 
  with standard set $\Delta$ is representable
  and that the representing scheme is a locally closed stratum in the Hilbert scheme of points.
  We cover the Hilbert scheme of points by open affine subschemes which represent the functor 
  associating to a $k$-algebra $B$ the set of all border bases with standard set $\Delta$
  and give reasonably small sets of equations defining these schemes. 
  We show that the schemes parametrizing Gr\"obner bases are connected;
  give a connectedness criterion for the schemes parametrizing border bases;
  and prove that the decomposition of the Hilbert scheme of points into the locally closed strata 
  parametrizing Gr\"obner bases is not a stratification.
\end{abstract}

\maketitle


\section{Introduction}

Let $k$ be a ring and $S=k[x_{1},\ldots,x_{n}]$ be the polynomial ring over $k$, 
equipped with a term order $\prec$. 
There are various notions of Gr\"obner bases, and of reduced Gr\"obner bases, 
of an ideal $I\subset S$ (see \cite{paueroverview} for an overview). 
We use that notion of a reduced Gr\"obner basis which is 
entirely analogous to the definition in the case where $k$ is a field. 
The definition will be given in Section \ref{notation}. 
The same notion of a reduced Gr\"obner basis is used in \cite{wibmer}, 
a paper which was a significant source of inspiration for the work presented here.
However, not every ideal $I$ has a reduced Gr\"obner basis in this sense;
a reduced Gr\"obner basis exists if, and only if, $I$ is a {\it monic} ideal. 
Attached to a monic ideal is its standard set, 
which is the set of those elements of $\mathbb{N}^n$ which do not occur as the multidegree of an element of $I$. 
For getting a feeling for monic ideals, let us look at some examples. 

\begin{ex}
  Let $n=1$ and $I=(f)\subset k[x]$ be a principal ideal. 
  Then $I$ is a monic ideal if, and only if, it is generated by a monic polynomial. 
  In this case the reduced Gr\"obner basis of $I$ consists of $f$ alone. 
  The standard set of $I$ is $\{0,\ldots,\deg(f)-1\}\subset\mathbb{N}$. 
\end{ex}

\begin{ex}
  Let $n=2$. We equip $S$ with the lexicographic order such that $x_1\succ x_2$. 
  The ideal $I_1\subset \mathbb{Z}[x_1,x_2]$ generated by
  \begin{equation*}
    \begin{split}
      f&=x_2^3-3x_2^2+2x_2\,,\\
      g&=x_1^2x_2-x_1^2+x_1x_2-x_1+x_2-1\,,\\
      h&=x_1^4+2x_1^3x_2^2-4x_1^3x_2+x_1+x_1^2x_2^2-x_1^2x_2-x_1^2\\
      &-50x_1x_2^2+49x_1x_2+x_1+50x_2^2-49x_2-1
    \end{split}
  \end{equation*}
  is monic with standard set 
  \begin{equation*}
    \Delta=\{(0,0),(1,0),(2,0),(3,0),(0,1),(1,1),(0,2),(1,2)\}\subset\mathbb{N}^2\,. 
  \end{equation*}
  The reduced Gr\"obner basis of $I_1$ consists of $f$, $g$ and $\widetilde{h}$, where
  \begin{equation*}
    \begin{split}
      \widetilde{h}&=x_1^4+x_1^3-x_1^2+8x_1x_2^4-46x_1x_2^3+35x_1x_2^2-2x_1x_2+x_1\\
      &+8x_2^4-42x_2^3+20x_2^2+13x_2-1\,.
    \end{split}
  \end{equation*}
\end{ex}

\begin{ex}
  Using the notation of the previous example, the ideal 
  \begin{equation*}
    I_2=(2f,g,h)\subset \mathbb{Z}[x_1,x_2] 
  \end{equation*}
  is not monic, and accordingly, does not have a reduced Gr\"obner basis. 
  The reason for that is the exponent $(0,3)\in\mathbb{N}^2$, which appears as the multidegree of a monic element of $I_1$, 
  but not as the multidegree of any monic element of $I_2$. 
\end{ex}

Before outlining our article, let us briefly summarize its main ideas:
\begin{itemize}
  \item A good notion of Gr\"obner basis over an arbitrary ring is that of the reduced Gr\"obner basis of a monic ideal. 
  \item The reducedness property guarantees functoriality of Gr\"obner bases. 
  \item Functoriality guarantees the existence of a moduli space. 
  \item Some familiar techniques for Gr\"obner bases over fields can be carried over to the setup over rings. 
  \item $S$-pair criteria as such are not needed. 
\end{itemize}

If $B$ is a $k$-algebra and $\Delta$ is a finite standard set, 
we attach to $B$ the set of all monic ideals $I\subset B[x]$ with standard set $\Delta$. 
As the reduced Gr\"obner basis of a monic ideal is unique, 
we may equivalently attach to $B$ the set of all reduced Gr\"obner bases in $B[x]$ with standard set $\Delta$. 
It turns out that this map is functorial in $B$. 
We denote the functor by $\HHi^{\prec\Delta}_{S/k}$. 
Note the dependence of this functor on both $\Delta$ and the term order $\prec$. 
The notation is motivated by the fact that $\HHi^{\prec\Delta}_{S/k}$ 
is a subfunctor of the {\it Hilbert functor of points} $\HHi^{d}_{S/k}$. 
The Hilbert functor of points has been widely studied 
(see \cite{iarrobino}, \cite{huibregtse}, \cite{norge}, \cite{bertin} and references therein).
In particular, it is well-known that this functor is represented by a scheme $\Hi^{d}_{S/k}$.
The notions of Hilbert functor and Hilbert scheme were introduced by Grothendieck in \cite{grothendieck}; 
see \cite{nitsure} for an introductory account of the subject.
In our paper we will show that $\HHi^{\prec\Delta}_{S/k}$ is a locally closed subfunctor of $\HHi^{d}_{S/k}$, 
hence representable by a locally closed subscheme of the Hilbert scheme. 
We will study an intermediate functor $\HHi^{\Delta}_{S/k}$, which is also representable, 
such that in the chain of representing objects
\begin{equation}\label{chain}
  \Hi^{\prec\Delta}_{S/k}\subset\Hi^{\Delta}_{S/k}\subset\Hi^{d}_{S/k}\,,
\end{equation}
the first inclusion is a closed immersion and the second inclusion is an open immersion. 

The moduli spaces $\Hi^{\Delta}_{S/k}$ and $\Hi^{\prec\Delta}_{S/k}$ have been studied by numerous authors, 
at least in the case where $k$ is a field. 
In the article \cite{krarticle}, the scheme $\Hi^{\Delta}_{S/k}$ is called {\it border basis scheme}, 
and in the article \cite{robbiano}, the scheme $\Hi^{\prec\Delta}_{S/k}$
is called {\it Gr\"obner basis scheme}. 
In the cited papers, and in \cite{kk1}, \cite{kk2}, \cite{kkr}, \cite{krbook},
a theory of border bases, which generalizes the theory of Gr\"obner basis, is developed. 
In the present paper we use border bases as well, in studying the functor $\Hi^{\Delta}_{S/k}$. 
Some of the results of the cited papers are parallel to those of our article here. 
Each time we state one such result, we will indicate its relation to the cited papers. 
However, the two major differences between the cited papers and our treatment here are that firstly
our treatment is more general, as our $k$ is an arbitrary ring, 
and secondly we use the functorial language. 

In fact, the starting point for the work presented here was the desire to obtain a {\it relative}, 
i.e. {\it functorial} notion of Gr\"obner bases. 
The desire for functoriality is what motivates the use of arbitrary rings rather than fields. 
However, functoriality only holds if the leading terms of a Gr\"obner basis are stable under arbitrary tensor products. 
Therefore we have to use monic ideals, and correspondingly, reduced Gr\"obner bases. 
Moreover, several of our results are novel, or stronger than previous results, 
even if we specialize to the case where $k$ is a field. 

The paper which bears the closest relationship to our paper here is \cite{huibregtse}.
Huibregtse studies the functors $\HHi^{\Delta}_{S/k}$ and schemes $\Hi^{\Delta}_{S/k}$, as we do here. 
(A standard set $\Delta$ in our notation corresponds to a {\it basis set} $\beta$ in his, 
and $\Hi^{\Delta}_{S/k}$ in our notation is $U_\beta$ in his.)
In this sense Huibretse also covers the functorial properties of $\Hi^{\Delta}_{S/k}$. 
However, his viewpoint is different from ours:
In Lemma 7 of \cite{huibregtse}, he shows how to glue the schemes $\Hi^{\Delta}_{S/k}$, 
for all standard sets $\Delta\subset\mathbb{N}^n$ of size $d$, to obtain the Hilbert scheme $\Hi^d_{S/k}$. 
This implies in particular that each $\HHi^{\Delta}_{S/k}$ is an open subfunctor of $\HHi^{d}_{S/k}$, 
and that the various functors $\HHi^{\Delta}_{S/k}$ form an open cover of $\HHi^{d}_{S/k}$. 
These two statements are Lemma \ref{opensubfunctor} and Proposition \ref{cover}, resp., in the present paper. 
In contrast to Huibregtse's approach, 
we show these statements directly at the level of functors, without using the representing schemes. 
Moreover, Huibretse's construction of $\Hi^{\Delta}_{S/k}$ is entirely different from ours. 
His is based on {\it pseudosyzygies} and {\it syzygies}, 
whereas ours avoids the use of $S$-pair criteria altogether, as was mentioned above. 
Furthermore, the notion of reduced Gr\"obner bases and monic ideals over a ring $k$ do not appear in \cite{huibregtse}. 
Accordingly, the schemes $\Hi^{\prec\Delta}_{S/k}$, which we are mostly interested in here, 
do not appear in the cited paper. 
In this sense our results on the functorial properties of $\Hi^{\prec\Delta}_{S/k}$ are original. 

Another line of work is to study strata analogous to ours in Grothendieck's classical Hilbert scheme 
${\rm Hilb}^{p(z)}_{\mathbb{P}^n_{k}}$, where $p(z)$ is a polynomial. 
The paper \cite{notari} is devoted to this project;
the equations defining the strata are derived from Buchberger's $S$-pair criterion. 
However, the cited paper contains a few inconsistencies, as is indicated in \cite{robbiano} and \cite{lella}. 
In particular, in the latter paper, 
the embedding of the strata in ${\rm Hilb}^{p(z)}_{\mathbb{P}^n_{k}}$ is elaborated upon with care. 
Also, it appears to be the first paper in which the term {\it Gr\"obner stratum} is used. 
Other papers in which related ideas appear are \cite{carra} and \cite{roggero}. 

As was mentioned above, the research presented here was largely inspired by the paper \cite{wibmer}. 
In that paper, $k$ is a noetherian ring. Wibmer considers an arbitrary ideal $I\subset S$. 
The canonical map $k\to S/I$ corresponds to a morphism of affine schemes $\phi:{\rm Spec}\,S/I\to{\rm Spec}\,k$. 
The main theorem of \cite{wibmer} (Theorem 11) states the existence of a unique decomposition of ${\rm Spec}\,k$ 
into a finite number of locally closed strata such that on each stratum the reduction of $I$ to each point of the stratum
has a reduced Gr\"obner basis of a prescribed shape. 
It is striking to note the analogy of that theorem to Theorem \ref{coprod} of our paper here. 
However, in Wibmer's setting $k$ has to be noetherian, 
whereas our setting requires no restriction on $k$. 

Our article is organized as follows. 
In Section \ref{notation}, we introduce the basic notions of monic ideals, reduced Gr\"obner bases and standard sets. 
In Section \ref{subfunctors}, we define the Hilbert functor $\HHi^{d}_{S/k}$ and the open subfunctors $\HHi^{\Delta}_{S/k}$, 
where $\Delta$ runs through all standard sets of size $d$. 
In Section \ref{standardcovering}, we thoroughly prove that these subfunctors cover the whole functor $\HHi^{d}_{S/k}$. 
That gives us the key to defining the subfunctor $\HHi^{\prec\Delta}_{S/k}$ of $\HHi^{\Delta}_{S/k}$
in Section \ref{bases}. 
In Section \ref{groebnersubfunctors}, we show that from representability of $\HHi^{\Delta}_{S/k}$, 
representability of $\HHi^{\prec\Delta}_{S/k}$ follows. 
In Section \ref{decomposition}, we show that the Hilbert scheme $\Hi^{d}_{S/k}$ 
is the disjoint union of the representing schemes $\Hi^{\prec\Delta}_{S/k}$. 
Thus far the techniques we use are non-explicit in the sense that we use abstract representability criteria for functors 
rather than explicit descriptions of representing schemes. 
Once functoriality is proved, we turn to more concrete questions. 
In Section \ref{affines}, we write down a set of equations defining the affine schemes 
$\Hi^{\Delta}_{S/k}$ and $\Hi^{\prec\Delta}_{S/k}$. 
In Section \ref{examples}, we study a few examples and improve the result of the previous section
in shrinking the set of equations defining the affine schemes. 
In Section \ref{universal}, we write down the universal objects of the functors $\HHi^{\Delta}_{S/k}$ and 
$\HHi^{\prec\Delta}_{S/k}$, 
which are affine schemes over $\Hi^{\Delta}_{S/k}$ and $\Hi^{\prec\Delta}_{S/k}$, resp. 
In Section \ref{homogeneous}, we show that $\HHi^{\prec\Delta}_{S/k}$ is connected, 
we present a homogeneous variant of what we have done so far, and we give a connectedness criterion for $\HHi^{\Delta}_{S/k}$. 
In Section \ref{changingcharts}, we explore the transition maps between $\Hi^{\Delta}_{S/k}$ and $\Hi^{\Pi}_{S/k}$, 
we track $\Hi^{\prec\Delta}_{S/k}$ in $\Hi^{\Pi}_{S/k}$, 
and we show that the decomposition of Theorem \ref{coprod} in general is not a stratification. 


\section{Notation}\label{notation}

We start by collecting the relevant definitions and facts concerning elements 
and ideals in $S$. 
Throughout, a monomial order $\prec$ on $S$ will be fixed. 
This is, in particular, a total order on the set of monomials $x^\alpha=x_{1}^{\alpha_{1}}\ldots x_{n}^{\alpha_{n}}$, 
where $\alpha=(\alpha_{1},\ldots,\alpha_{n})\in\mathbb{N}^n$. 
Let $f\in S$, then the monomial order gives the well-known definitions of
\begin{itemize}
  \item coefficient ${\rm coef}(f,x^\alpha)$ of $f$ at $x^\alpha$;
  \item support ${\rm supp}(f)$, which is the set of all $x^\alpha$ such that ${\rm coef}(f,x^\alpha)\neq0$;
  \item leading monomial ${\rm LM}(f)$;
  \item leading coefficient ${\rm LC}(f)$;
  \item leading term ${\rm LT}(f)$, which equals ${\rm LC}(f){\rm LM}(f)$;
  \item leading exponent (or multidegree) ${\rm LE}(f)$, for which ${\rm LM}(f)=x^{{\rm LE}(f)}$;
  \item the non-leading exponents, which are those $\alpha$ such that $x^\alpha$ lies in 
  ${\rm supp}(f)$ but does not equal ${\rm LM}(f)$.
\end{itemize}

If $I\subset S$ is an ideal, we let ${\rm LM}(I)$ be the set of all ${\rm LM}(f)$, where $f$ runs through $I-\{0\}$. 
This set is closed with respect to multiplication by arbitrary monomials. 
Analogously, we let ${\rm LT}(I)$ be the set of all ${\rm LT}(f)$, where $f$ runs through $I-\{0\}$. 
This set is also closed with respect to multiplication by arbitrary monomials. 
Clearly if $k$ is a field, then ${\rm LT}(I)$ carries the same information as ${\rm LM}(I)$ does, 
but if $k$ is a ring, then in general ${\rm LT}(I)$ carries more information than ${\rm LM}(I)$ does. 

If $I$ is an ideal in $S$ and $x^\alpha$ is a monomial, the set
\begin{equation*}
  {\rm LC}(I,x^\alpha)=\{{\rm LC}(f);f\in I-\{0\},{\rm LM}(f)=x^\alpha\}\cup\{0\}
\end{equation*}
is an ideal in $k$. 
\begin{dfn}
  An ideal $I$ is called {\it monic} (see \cite{pauer}, Definition 3.3 or \cite{wibmer}, Definition 4)
  if the following equivalent conditions are satisfied:
  \begin{itemize} 
    \item For all monomials $x^\alpha$, the ideal ${\rm LC}(I,x^\alpha)$ is either the zero ideal or the unit ideal;
    \item each element of ${\rm LM}(I)$ arises as the leading monomial of a monic $f\in I$;
    \item ${\rm LT}(I)$ is a monomial ideal;
    \item the sets ${\rm LM}(I)$ and ${\rm LT}(I)$ carry the same information.
  \end{itemize}
\end{dfn}

Note that if $k$ is a field, then each ideal in $S$ is generated by finitely many monic polynomials, hence in particular monic. 
Also note that if $k$ is an arbitrary ring and $I$ is a monic ideal, then $I$ is finitely generated. 

We will mostly be working with leading exponents, more precisely, 
with the set ${\rm LE}(I)$, which is the set of all ${\rm LE}(f)$, where $f$ runs through $I$. 
Clearly ${\rm LE}(I)$ carries the same information as ${\rm LM}(I)$. 
Therefore it carries the same information as ${\rm LT}(I)$ if, and only if, $I$ is monic. 
In fact, we will not be working with ${\rm LE}(I)$ itself but rather with its complement in $\mathbb{N}^n$:

\begin{dfn}
  \begin{itemize}
    \item A {\it standard set} (or {\it staircase}, or {\it Gr\"obner escalier}) in $\mathbb{N}^n$ 
    is a subset $\Delta\subset\mathbb{N}^n$ such that
    its complement in $\mathbb{N}^n$ is closed with respect to addition with elements of $\mathbb{N}^n$. 
    (Equivalently, standard sets are precisely the complements of the sets ${\rm LE}(I)$, 
    where $I$ runs through all ideals in $S$.)
    \item If for a given $I$ we have ${\rm LE}(I)=\mathbb{N}^n-\Delta$, 
    we say that $\Delta$ is the {\it standard set attached to $I$}.
    \item If $\Delta$ is a standard set, the set $\mathscr{C}(\Delta)$ of {\it corners} of $\Delta$ is the set of all 
    $\alpha\in\mathbb{N}^n-\Delta$ such that for all $i$, $\alpha-e_{i}\notin\mathbb{N}^n-\Delta$,
    where $e_{i}$ is the $i$-standard basis vector. 
    \item If $\Delta$ is a standard set, 
    the {\it border} of $\Delta$ is the set $\mathscr{B}(\Delta)=\cup_{i=1}^n(\Delta+e_{i})-\Delta$. 
    \item (Note that if $\Delta$ is a standard set, then $\Delta\cup\mathscr{B}(\Delta)$ is a standard set as well.)
    \item An {\it edge point} of a standard set $\Delta$ is an $\epsilon\in\Delta$ such that 
    there exist $\lambda$ and $\lambda^\prime$ in $\{e_{1},\ldots,e_{n}\}$ having the property that 
    $\epsilon+\lambda$ and $\epsilon+\lambda^\prime$ both lie in $\mathscr{B}(\Delta)$
    and $\epsilon+\lambda+\lambda^\prime$ lies in $\mathscr{C}(\Delta\cup\mathscr{B}(\Delta))$. 
    \item If $\epsilon\in\Delta$ is an edge point, the vectors $\lambda$ and $\lambda^\prime$ are, however, 
    not uniquely determined. Therefore, in the situation of the last bulleted item, 
    we call $(\epsilon,\lambda,\lambda^\prime)$ an {\it edge triple}.
  \end{itemize}
\end{dfn}

Figure \ref{fig} shows an example of a standard set in $\mathbb{N}^2$, along with its corners, its border and its edge points. 
The standard set is drawn in thick lines, the corners are marked by bullets, 
all other points in the border are marked by circles, 
and the edge points are marked by boxes.

\begin{center}
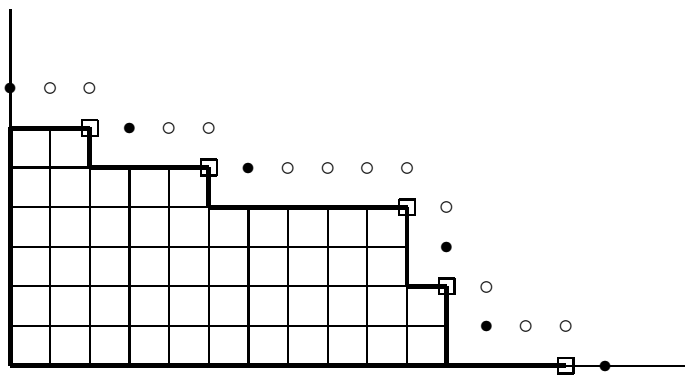
\begin{figure}[ht]
  \begin{picture}(270,155)
    \put(240,10){\line(1,0){45}}
    \put(30,100){\line(0,1){45}}
    \put(30,25){\line(1,0){165}}
    \put(30,40){\line(1,0){150}}
    \put(30,55){\line(1,0){150}}
    \put(30,70){\line(1,0){75}}
    \put(30,85){\line(1,0){30}}
    \put(45,10){\line(0,1){90}}
    \put(60,10){\line(0,1){75}}
    \put(75,10){\line(0,1){75}}
    \put(90,10){\line(0,1){75}}
    \put(105,10){\line(0,1){60}}
    \put(120,10){\line(0,1){60}}
    \put(135,10){\line(0,1){60}}
    \put(150,10){\line(0,1){60}}
    \put(165,10){\line(0,1){60}}
    \put(180,10){\line(0,1){30}}
    \put(30,115){\circle*{4}}
    \put(45,115){\circle{4}}
    \put(60,115){\circle{4}}
    \put(75,100){\circle*{4}}
    \put(90,100){\circle{4}}
    \put(105,100){\circle{4}}
    \put(120,85){\circle*{4}}
    \put(135,85){\circle{4}}
    \put(150,85){\circle{4}}
    \put(165,85){\circle{4}}
    \put(180,85){\circle{4}}
    \put(195,70){\circle{4}}
    \put(195,55){\circle*{4}}
    \put(210,40){\circle{4}}
    \put(210,25){\circle*{4}}
    \put(225,25){\circle{4}}
    \put(240,25){\circle{4}}
    \put(255,10){\circle*{4}}
    \put(57,103){\line(1,0){6}}
    \put(57,97){\line(1,0){6}}
    \put(57,97){\line(0,1){6}}
    \put(63,97){\line(0,1){6}}
    \put(102,88){\line(1,0){6}}
    \put(102,82){\line(1,0){6}}
    \put(102,82){\line(0,1){6}}
    \put(108,82){\line(0,1){6}}
    \put(177,73){\line(1,0){6}}
    \put(177,67){\line(1,0){6}}
    \put(177,67){\line(0,1){6}}
    \put(183,67){\line(0,1){6}}
    \put(192,43){\line(1,0){6}}
    \put(192,37){\line(1,0){6}}
    \put(192,37){\line(0,1){6}}
    \put(198,37){\line(0,1){6}}
    \put(237,13){\line(1,0){6}}
    \put(237,7){\line(1,0){6}}
    \put(237,7){\line(0,1){6}}
    \put(243,7){\line(0,1){6}}
    \linethickness{.5mm}
    \put(30,10){\line(1,0){165}}
    \put(30,10){\line(0,1){90}}
    \put(180,40){\line(1,0){15}}
    \put(105,70){\line(1,0){75}}
    \put(60,85){\line(1,0){45}}
    \put(30,100){\line(1,0){30}}
    \put(60,85){\line(0,1){15}}
    \put(105,70){\line(0,1){15}}
    \put(180,40){\line(0,1){30}}
    \put(195,10){\line(0,1){30}}
    \put(195,10){\line(1,0){45}}
  \end{picture}
\caption{A standard set, $\circ$ its border, $\bullet$ its corners, and $\boxempty$ its edge points.}
\label{fig}
\end{figure}
\end{center}

The standard set of Figure \ref{fig} has the property that for each edge point $\epsilon$,
there exist unique $\lambda$ and $\lambda^\prime$ such that $(\epsilon,\lambda,\lambda^\prime)$ 
is an edge triple. 
Figure \ref{figedgetriples} shows a standard set $\Delta$, again drawn in thick lines, 
such that each edge point $\epsilon$ admits multiple $\lambda$ and $\lambda^\prime$
making an edge triple $(\epsilon,\lambda,\lambda^\prime)$. 
The border of $\Delta$, which is at the same time the set of corners of $\Delta$, is marked by bullets. 
The border of $\Delta\cup\mathscr{B}(\Delta)$, 
which is at the same time the set of corners of $\Delta\cup\mathscr{B}(\Delta)$, is marked by diamonds. 
The edge points are marked by boxes. 
The edge triples of $\Delta$ are presented in the table below. 

\begin{center}
  \begin{tabular}{|c|c|c|}
    \hline
    $\epsilon$ & $\lambda$ & $\lambda^\prime$ \\
    \hline
    (1,0,0) & (1,0,0) & (0,1,0) \\
    (1,0,0) & (1,0,0) & (0,0,1) \\
    (1,0,0) & (0,1,0) & (0,0,1) \\
    (0,1,0) & (0,1,0) & (1,0,0) \\
    (0,1,0) & (0,1,0) & (0,0,1) \\
    (0,1,0) & (1,0,0) & (0,0,1) \\
    (0,0,1) & (0,0,1) & (1,0,0) \\
    (0,0,1) & (0,0,1) & (0,1,0) \\
    (0,0,1) & (1,0,0) & (0,1,0) \\
    \hline
  \end{tabular}
\end{center}

\begin{center}
\begin{figure}
  \begin{picture}(120,135)
    \put(75,50){\line(1,0){60}}
    \put(60,65){\line(0,1){60}}
    \put(49.8,43.2){\line(-3,-2){40}}
    \put(90,50){\circle*{4}}
    \put(75,65){\circle*{4}}
    \put(60,80){\circle*{4}}
    \put(49.8,58.2){\circle*{4}}
    \put(39.8,36.6){\circle*{4}}
    \put(64.8,43.2){\circle*{4}}
    \put(102.4,47.5){$\diamond$}
    \put(87.4,62.5){$\diamond$}
    \put(72.4,77.5){$\diamond$}
    \put(57.4,92.5){$\diamond$}
    \put(47.2,70.7){$\diamond$}
    \put(37.2,49.1){$\diamond$}
    \put(27.2,27.5){$\diamond$}
    \put(52.2,34.1){$\diamond$}
    \put(77.2,40.7){$\diamond$}
    \put(62.2,55.7){$\diamond$}
    \put(72,53){\line(1,0){6}}
    \put(72,47){\line(1,0){6}}
    \put(72,47){\line(0,1){6}}
    \put(78,47){\line(0,1){6}}
    \put(57,68){\line(1,0){6}}
    \put(57,62){\line(1,0){6}}
    \put(57,62){\line(0,1){6}}
    \put(63,62){\line(0,1){6}}
    \put(46.9,46.3){\line(1,0){6}}
    \put(46.9,40.3){\line(1,0){6}}
    \put(46.9,40.3){\line(0,1){6}}
    \put(52.9,40.3){\line(0,1){6}}
    \linethickness{.5mm}
    \put(60,50){\line(1,0){15}}
    \put(60,50){\line(0,1){15}}
    \put(60,50){\line(-3,-2){10}}
    \put(60,50.1){\line(-3,-2){10}}
    \put(60,50.2){\line(-3,-2){10}}
    \put(60,50.3){\line(-3,-2){10}}
    \put(60,50.4){\line(-3,-2){10}}
    \put(60,49.9){\line(-3,-2){10}}
    \put(60,49.8){\line(-3,-2){10}}
    \put(60,49.7){\line(-3,-2){10}}
    \put(60,49.6){\line(-3,-2){10}}
    \put(60,49.5){\line(-3,-2){10}}
    \put(60,49.4){\line(-3,-2){10}}
  \end{picture}
\caption{A standard set $\Delta$ together with $\bullet$ elements of $\mathscr{B}(\Delta)$ and 
$\diamond$ elements of $\mathscr{C}(\Delta\cup\mathscr{B}(\Delta))$.}
\label{figedgetriples}
\end{figure}
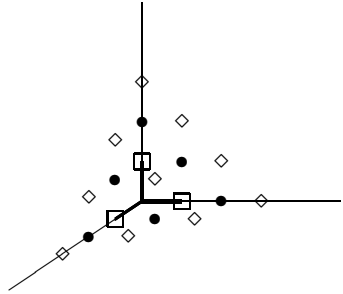
\end{center}

\begin{dfn}
  A {\it Gr\"obner basis} of an ideal $I$ is a finite subset $G$ of $I$ such that the ideals in $S$ 
  generated by ${\rm LT}(I)$ and ${\rm LT}(g)$, for $g\in G$, agree. 
  Note that not every ideal $I$ necessarily admits a Gr\"obner basis, 
  since $k$ was not assumed to be noetherian.
  A Gr\"obner basis $G$ is called {\it reduced} if 
  $\{{\rm LE}(g);\,g\in G\}=\mathscr{C}(\Delta)$, where ${\rm LE}(I)=\mathbb{N}^n-\Delta$; 
  each $g\in G$ is monic; 
  and all non-leading exponents of $g$ lie in the standard set attached to $I$. 
  An ideal $I$ admits a reduced Gr\"obner basis if, and only if, $I$ is monic. 
  (See \cite{aschenbrenner}, Theorem 2.11 and \cite{wibmer}, Theorem 4.)
\end{dfn}


\section{The Hilbert functor of points and its standard subfunctors}\label{subfunctors}

Fix a positive integer $d$. We consider the {\it Hilbert functor of points} 
\begin{equation}\label{functor}
  \begin{split}
    \HHi^d_{S/k}:(k\text{-Alg})&\to(\text{Sets})\\
    B&\mapsto\left\lbrace
      \begin{array}{c}
        \phi:B[x]\to Q\text{ such that }\\
        \phi\text{ is surjective and }\\
        Q\text{ is a locally free $B$-module of rank $d$}
      \end{array}
    \right\rbrace/\sim\,,
  \end{split}
\end{equation}
where $\phi:B[x]\to Q$ and $\phi^\prime:B[x]\to Q^\prime$ are equivalent if there 
exists a $B$-algebra isomorphism $\psi:Q\to Q^\prime$ such that the following diagram commutes:
\begin{equation*}
  \begin{CD}
    B[x]@>\phi>>Q\\
    @V{\rm id}VV         @VV\psi V\\
    B[x]@>\phi^\prime>>Q^\prime\,.
  \end{CD}
\end{equation*}
Therefore $\phi$ and $\phi^\prime$ are equivalent if, and only if, their kernels agree. 
In this sense, the functor $\HHi^d_{S/k}$ parametrizes all ideals in the polynomial ring $S$ 
which are locally free of codimension $d$. 

At this point a remark on local freeness is in order. 
In the literature, one can find at least two definitions of when a $B$-module is locally free 
(see \cite{eisenbud}, p.137). 
The first is to demand that for each prime ideal $\mathfrak{p}\subset B$, 
the localized module $M_{\mathfrak{p}}$ is free over the localized ring $B_{\mathfrak{p}}$. 
The second is to demand that there exist $f_{1},\ldots,f_{t}\in B$ generating the unit ideal such that
each localization $M[f_{i}^{-1}]$ is a free $R[f_{i}^{-1}]$-module. 
The second definition (which is used for instance in \cite{haimanbernd}) is stronger.
However, if the module $M$ is locally free of a finite rank $d$, both definitions agree, 
and are equivalent to the following statement: For each prime ideal $\mathfrak{p}\subset B$, 
there exists an $f\in B-\mathfrak{p}$ such that the localized module $M_f$ is free over the localized ring $B_f$. 
We will use this definition in the proof of Proposition \ref{cover} below. 

Our first goal is to cover the functor $\HHi^d_{S/k}$ by a finite collection of open subfunctors,
indexed by all standard sets of size $d$. 
We shall now define these subfunctors.
Given a standard set $\Delta$, we use the shorthand notation 
$x^\Delta$ for the family $(x^\beta)_{\beta\in\Delta}$ and $kx^\Delta=\oplus_{\beta\in\Delta}kx^\beta$. 
We consider the canonical inclusion 
\begin{equation*}
    \iota_{\Delta}:kx^\Delta\to S\,.
\end{equation*}

\begin{dfn}\label{defdelta}
  Let $\Delta$ be a standard set of size $d$. 
  We define $\HHi^\Delta_{S/k}$ to be the subfunctor of $\HHi^d_{S/k}$ 
  which associates to each $k$-algebra $B$ the set of equivalence classes of all 
  $\phi:B[x]\to Q$ as in \eqref{functor} such that the composition
  \begin{equation*}
    \begin{CD}
      Bx^\Delta=B\otimes_{k}kx^\Delta
      @>{\rm id}\otimes\iota_{\Delta}>>B[x]@>\phi>>Q
    \end{CD}
  \end{equation*}
  is surjective, and therefore an isomorphism.
\end{dfn}

(The same functors are studied in \cite{huibregtse}, as was mentioned in the Introduction.)
In particular, all $Q$ appearing in $\HHi^\Delta_{S/k}(B)$ are free $B$-modules of rank $d$. 
Evidently there are the following alternative descriptions of the subfunctor,
\begin{equation}\label{delta1}
  \HHi^\Delta_{S/k}(B)=
  \left\lbrace
  \begin{array}{c}
      \phi:B[x]\to Q\text{ such that }\\
      \phi\text{ is surjective and }\\
      (x^\beta+\ker\phi)_{\beta\in\Delta}\text{ is a }B\text{-basis of }B[x]/\ker\phi
    \end{array}
  \right\rbrace/\sim\,,
\end{equation}
and also 
\begin{equation}\label{delta2}
  \HHi^\Delta_{S/k}(B)=
  \left\lbrace
  \begin{array}{c}
      \phi:B[x]\to Q\text{ such that }\\
      \phi\text{ is surjective and }\\
      (\phi(x^\beta))_{\beta\in\Delta}\text{ is a }B\text{-basis of }Q
    \end{array}
  \right\rbrace/\sim\,.
\end{equation}
Upon fixing an isomorphism $Q=Bx^\Delta$ and requiring that $\phi\circ({\rm id}\otimes\iota_{\Delta})={\rm id}$, 
we can rephrase the functor $\HHi^\Delta_{S/k}$ as follows:
\begin{equation}\label{delta3}
  \HHi^\Delta_{S/k}(B)=
  \left\lbrace
  \begin{array}{c}
    \phi:B[x]\to Bx^\Delta;\\
    \phi\text{ is a $k$-algebra homomorphism such that }\\
    \phi\circ({\rm id}\otimes\iota_{\Delta})={\rm id}\,.
  \end{array}
  \right\rbrace\,.
\end{equation}
The multiplicative structure on $Bx^\Delta$, making this module a $B$-algebra, 
is induced by that on $B[x]$ by the equation $B[x]/\ker\phi=Bx^\Delta$. 
Note that by fixing the isomorphism, we pick one representative of the equivalence class modulo $\sim$. 
In what follows, we will shift freely between the descriptions \eqref{delta1}, \eqref{delta2} and \eqref{delta3}. 

One can replace the homomorphism $\iota_{\Delta}$ by an arbitrary 
$k$-module homomorphism $\phi:k^d\to S$ and define a functor $\HHi^\phi_{S/k}$ analogous to the above. 
Such functors have been used in \cite{norge}, Section 5.1. 
The authors state that $\HHi^\phi_{S/k}$ is an open subfunctor of $\HHi^d_{S/k}$
and give a sketch of proof for this. 
For preparing the ground for the next sections, 
we carefully prove openness of the subfunctor $\HHi^\Delta_{S/k}$ of $\HHi^d_{S/k}$ here.
The proof for a functor $\HHi^\phi_{S/k}$ as in \cite{norge} is entirely analogous. 

\begin{lmm}\label{opensubfunctor}
  The canonical inclusion $i:\HHi^\Delta_{S/k}\to\HHi^d_{S/k}$ is an open embedding of functors. 
\end{lmm}

\begin{proof}
  Given a $k$-scheme $X$, we denote by $h_{X}$ the ${\rm Hom}$ functor which sends 
  a $k$-scheme $Y$ to the set ${\rm Mor}_{k}(Y,X)$. 
  By \cite{eisenbudharris}, Definition VI-5, we have to check that for each $k$-algebra $B$ 
  and each morphism of functors $\psi:h_{{\rm Spec}\,B}\to\HHi^d_{S/k}$, 
  the above horizontal arrow in the cartesian diagram 
  \begin{equation}\label{cartesian}
    \begin{CD}
      \mathscr{G}@>>>h_{{\rm Spec}\,B}\\
      @VVV @VV\psi V\\
      \HHi^\Delta_{S/k}@>i>>\HHi^d_{S/k}
    \end{CD}
  \end{equation}
  is isomorphic to the the inclusion of functors $h_{U}\to h_{{\rm Spec}\,B}$ 
  induced by the inclusion of schemes $U\to{\rm Spec}\,B$,
  where $U$ is an open subscheme of ${\rm Spec}\,B$. 
  So let an arrow $\psi:h_{{\rm Spec}\,B}\to\HHi^d_{S/k}$ be given. 
  By Yoneda's Lemma (see \cite{eisenbudharris}, Lemma VI-1), this is an element of $\HHi^d_{S/k}(B)$, 
  therefore the equivalence class of a surjective $\phi:B[x]\to Q$. 
  After localizing in $B$ at $f_{1},\ldots,f_{s}\in B$ which generate the unit ideal, 
  we may assume that $Q$ is a free $B$-module of rank $d$. 
  Further, let $\rho:k\to B$ be the structure morphism of the $k$-algebra $B$.
  The functor $\mathscr{G}$ in the cartesian diagram \eqref{cartesian}
  associates to each $k$-algebra $A$ the set of all pairs 
  $(g,h)$ in $h_{{\rm Spec}\,B}({\rm Spec}\,A)\times\HHi^\Delta_{S/k}(A)$ such that 
  $\psi(g)=i(h)$ in $\HHi^d_{S/k}$. 
  However, $g$ is nothing but a $k$-algebra homomorphism $\gamma:B\to A$, 
  and $h$ is nothing but (the equivalence class of) a $k$-algebra homomorphism $\eta:A[x]\to Q^\prime$. 
  Therefore, the condition $\psi(g)=i(h)$ says that the morphisms 
  \begin{equation*}
    \phi\otimes\gamma:A[x]\otimes_{B}A=B[x]\to Q\otimes_{B}A
  \end{equation*}
  and
  \begin{equation*}
    \eta:A[x]\to Q^\prime
  \end{equation*}
  are in the same equivalence class. 
  After localizing also at certain elements of $A$, we may assume that $Q^\prime$ is free of rank $d$. 
  We now fix isomorphisms $Q\otimes_{B}A=Ax^\Delta$ and $Q^\prime=Ax^\Delta$
  and accordingly demand that $\phi\otimes\gamma=\eta$. 
  Then the condition making the diagram cartesian is that 
  $\eta$ lies in $\HHi^\Delta_{S/k}(A)$. 
  In other words, we have reformulated the functor $\mathscr{G}$ as follows:
  $\mathscr{G}(A)$ is the set of all $\gamma:B\to A$ such that 
  $\phi\otimes\gamma:A[x]\to Ax^\Delta$ is an $A$-algebra homomorphism and 
  \begin{equation}\label{composition}
    (\phi\otimes\gamma)\circ(\iota_{\Delta}\otimes(\gamma\circ\rho)):Ax^\Delta\to A[x]\to Ax^\Delta
  \end{equation}
  is an isomorphism. Consider the special case $B=B$, $\gamma={\rm id}:B\to B$ and the composition 
  \begin{equation*}
    \phi\circ(\iota_{\Delta}\otimes\rho):Bx^\Delta\to B[x]\to Bx^\Delta\,.
  \end{equation*}
  Let $M$ be the matrix of this $B$-module homomorphism, and $J\subset B$ be the ideal 
  generated by $\det(M)$. Then clearly for any $\gamma:B\to A$, 
  the composition \eqref{composition} is an isomorphism if, and only if, $A=A\gamma(J)$. 
  By Exercise VI-6 of \cite{eisenbudharris}, we are done.
\end{proof}

As was mentioned in the Introduction, the statement of Lemma \ref{opensubfunctor} is implicit in \cite{huibregtse},
but not explicitly proved there. 


\section{The standard cover}\label{standardcovering}

In Section 5.2 of \cite{norge}, the authors show with a very quick argument 
that their functors $\HHi^\phi_{S/k}$, 
where $\phi$ runs through all homomorphisms $B^d\to B[x]$, 
form an open cover of the functor $\HHi^d_{S/k}$, 
and also that there exists a finite set of such subfunctors which covers $\HHi^d_{S/k}$. 
We now show, in a constructive way, that our subfunctors $\HHi^\Delta_{S/k}$, 
which are also finite in number, suffice to cover $\HHi^d_{S/k}$.
Our covering family of subfunctors is a subfamily of the family of \cite{norge}, and is minimal. 

\begin{pro}\label{cover}
  The functors $\HHi^\Delta_{S/k}$, where $\Delta$ runs through all standard sets of size $d$, 
  form an open cover of the functor $\HHi^d_{S/k}$. 
  Moreover, this cover is minimal in the sense that when removing any member of it, 
  the result is no longer a cover. 
\end{pro}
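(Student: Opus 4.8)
The plan is to prove the two assertions separately. For the covering assertion, I would fix a $k$-algebra $B$ and an element of $\HHi^r_{S/k}(B)$, represented by a surjection $\phi:B[x]\to Q$ with $Q$ locally free of rank $r$; I must produce a standard set $\delta$ of size $r$ and, after passing to a suitable Zariski-open cover of $\operatorname{Spec}B$, exhibit the given class as lying in $\HHi^\delta_{S/k}$. By the openness result of Section \ref{subfunctors} it suffices to work locally, so I may assume $Q$ is free of rank $r$. The key point is that the monomials $x^\alpha$ span $Q$ as a $B$-module, and I want to select a standard set $\delta$ of cardinality $r$ whose images $(\phi(x^\beta))_{\beta\in\delta}$ form a basis of $Q$; by \eqref{delta2} this is exactly what membership in $\HHi^\delta_{S/k}(B)$ demands. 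The natural mechanism is a Gröbner-basis–type argument: order the monomials by $\prec$, and greedily build $\delta$ by adding $x^\alpha$ (in increasing $\prec$-order) whenever $\phi(x^\alpha)$ is not in the $B$-span of the $\phi(x^\beta)$ already chosen. One must check that the resulting $\delta$ is a standard set — i.e. its complement is closed under addition — which follows because if $\phi(x^\alpha)$ lies in the span of earlier $\phi(x^\beta)$, then multiplying a witnessing relation by $x_i$ shows $\phi(x^{\alpha+e_i})$ also lies in the span of $\phi$-images of monomials $\prec x^{\alpha+e_i}$, hence $\alpha+e_i\notin\delta$. The delicate part is the locality: "not in the span" is not an open condition unless one phrases it via an invertible minor, so I would instead argue at a point (a prime $\mathfrak p$ of $B$, with residue field $\kappa(\mathfrak p)$), where $Q\otimes\kappa(\mathfrak p)$ is a genuine $\kappa(\mathfrak p)$-vector space of dimension $r$ and the greedy construction over a field produces a standard set $\delta_{\mathfrak p}$ of size exactly $r$; then Nakayama, together with the openness of $\HHi^{\delta_{\mathfrak p}}_{S/k}$ inside $\HHi^r_{S/k}$, shows the class lies in $\HHi^{\delta_{\mathfrak p}}_{S/k}(B_g)$ for some $g\notin\mathfrak p$. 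Letting $\mathfrak p$ range over $\operatorname{Spec}B$ gives the open cover by finitely many of the $\HHi^\delta_{S/k}$ (finiteness is automatic because there are only finitely many standard sets of size $r$).

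I expect the main obstacle to be precisely this passage from the fibrewise statement to an honest open subscheme of $\operatorname{Spec}B$: one has to be careful that the standard set attached to a fibre is locally constant, which is false in general, but is salvaged by the fact that the functors $\HHi^\delta_{S/k}$ are already known to be open, so one only needs existence of \emph{some} $\delta$ working near each point, not a single $\delta$ working globally. I would phrase this using exercise VI-6 of \cite{eisenbudharris} (an element of $\HHi^r_{S/k}(B)$ lies in the open subfunctor $\HHi^\delta_{S/k}$ after restricting to the open locus where a certain determinant is invertible) combined with the observation that at $\mathfrak p$ the relevant determinant — the $r\times r$ minor of $\phi$ indexed by $\delta_{\mathfrak p}$ in the rows and some basis in the columns — is a unit, hence nonzero in $\kappa(\mathfrak p)$, hence invertible in some $B_g$.

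For minimality, fix a standard set $\delta_0$ of size $r$ and exhibit an element of $\HHi^r_{S/k}(k)$ (or of $\HHi^r_{S/\mathbb Z}(\mathbb Z)$, or over a field, which is enough) that lies in $\HHi^{\delta_0}_{S/k}$ but in no other $\HHi^\delta_{S/k}$. The canonical candidate is the \emph{monomial ideal} $I_0=(x^\alpha:\alpha\in\mathbb N^n-\delta_0)$, with $Q=k[x]/I_0=kx^{\delta_0}$ and $\phi$ the quotient map: here $\operatorname{LE}(I_0)=\mathbb N^n-\delta_0$, so the monomials $x^\beta$, $\beta\in\delta_0$, form a basis of $Q$, and for any \emph{other} standard set $\delta$ of size $r$ there is some $\beta\in\delta\setminus\delta_0$ with $x^\beta\in I_0$, so $\phi(x^\beta)=0$ and $(\phi(x^\gamma))_{\gamma\in\delta}$ cannot be a basis; hence this class is not in $\HHi^\delta_{S/k}(k)$ for $\delta\neq\delta_0$. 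Since $\HHi^\delta_{S/k}(k)\subset\HHi^r_{S/k}(k)$, removing $\HHi^{\delta_0}_{S/k}$ from the collection loses this point, so the cover is minimal. The only subtlety here is to make sure $\delta\neq\delta_0$ with $|\delta|=|\delta_0|=r$ forces $\delta\not\subset\delta_0$, which is immediate since two sets of equal finite cardinality with one contained in the other must be equal; and to confirm $\operatorname{LM}(I_0)$, equivalently $\operatorname{LE}(I_0)$, is exactly $\mathbb N^n-\delta_0$ for the monomial ideal, which is standard. I would present the covering argument first and the minimality argument as a short closing paragraph.
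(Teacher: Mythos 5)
Your proposal follows essentially the same route as the paper: for the covering statement you pass to the residue field at a point, extract the standard set from the Gröbner basis of $\ker\phi_\kappa$ (your ``greedy'' selection of monomials is just the usual construction of the staircase of $\ker\phi_\kappa$), lift a basis via Nakayama, and spread out to an open neighborhood using the openness already proved; and for minimality you use the same monomial ideal $I_{0}=(x^{\alpha};\alpha\in\mathbb{N}^{n}-\delta_{0})$ to produce a point lying in $\HHi^{\delta_{0}}_{S/k}$ only. The only cosmetic differences are that the paper localizes at maximal ideals rather than at general primes (equivalent here), and your direction of the set difference $\beta\in\delta\setminus\delta_{0}$ in the minimality step is the correct one (the paper's printed ``$\beta\in\epsilon-\delta$'' is a typo for ``$\beta\in\delta-\epsilon$'').
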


\begin{proof}
  We will show that for all $B\in(k\text{-Alg})$, for all prime ideals $\mathfrak{p}\subset B$
  and for all $\phi\in\HHi^d_{S/k}(B)$, 
  there exist a $g\in B-\mathfrak{p}$ and a standard set $\Delta$ of size $d$ such that the localization 
  \begin{equation*}
    (\phi\otimes{\rm id}_{B_{g}})\circ(\iota_{\Delta}\otimes{\rm id}_{B_{g}}):B_{g}x^\Delta\to B_{g}[x]\to Q_{g}
  \end{equation*}
  is an isomorphism. 
  This will prove that the various $\HHi^\Delta_{S/k}$ cover $\HHi^d_{S/k}$. 

  Let $B$ be a $k$-algebra and $\phi:B[x]\to Q$ be a $B$-algebra homomorphism representing an element of 
  $\HHi^d_{S/k}(B)$, and let $\mathfrak{p}\subset B$ be a prime ideal. 
  We use the localization $B_{\mathfrak{p}}$ and its residue field 
  $\kappa=B_{\mathfrak{p}}/\mathfrak{p}B_{\mathfrak{p}}$. 
  Upon tensoring $\phi$ with $B_{\mathfrak{p}}$ and $\kappa$, respectively, we obtain the extensions
  \begin{equation}\label{extensions}
    \begin{split}
      \phi_{\mathfrak{p}}:B_{\mathfrak{p}}[x]&\to Q_{\mathfrak{p}}\,,\\
      \phi_{\kappa}:\kappa[x]&\to Q_{\kappa}\,.
    \end{split}
  \end{equation}
  By assumption, $Q$ is locally free of rank $d$, i.e., there exist an $f\in B-\mathfrak{p}$ such that
  $Q_{f}=\oplus_{i=1}^dB_{f}\epsilon_{i}$.
  Localizing further, we get $Q_{\mathfrak{p}}=\oplus_{j=1}^dB_{\mathfrak{p}}\epsilon_{j}$. 
  Taking residue classes, we get $Q_{\kappa}=\oplus_{j=1}^d\kappa\epsilon_{j}$.
  Local freeness of $Q$ and surjectivity of $\phi$ imply that both maps in \eqref{extensions} are surjective. 
  Since $\kappa$ is a field, the ideal $\ker\phi_{\kappa}$ has a Gr\"obner basis w.r.t. $\prec$, 
  with a standard set $\Delta$ attached to it. 
  As $Q_{\kappa}$ has dimension $d$, the standard set has size $d$.
  The family $x^\beta+\ker\phi_{\kappa}$, where $\beta$ runs through $\Delta$, 
  is a $\kappa$-basis of $\kappa[x]/\ker\phi_{\kappa}$. 
  Therefore the family $\phi_{\kappa}(x^\beta)$, where $\beta$ runs through $\Delta$, 
  is a $\kappa$-basis of $Q_{\kappa}$.
  From the commutative diagram
  \begin{equation*}
    \begin{CD}
      B_{\mathfrak{p}}[x]@>\phi_{\mathfrak{p}}>>Q_{\mathfrak{p}}\\
      @V{\rm can}VV @VV{\rm can}V\\
      \kappa[x]@>\phi_{\kappa}>>Q_{\kappa}\,,
    \end{CD}
  \end{equation*}
  where the vertical arrows are the canonical maps, we see that
  $\phi_{\mathfrak{p}}(x^\beta)$ is a lift of $\phi_{\kappa}(x^\beta)$
  w.r.t. the canonical map. Nakayama's Lemma (see \cite{eisenbud}, Corollary 4.8) implies that the family 
  $\phi_{\mathfrak{p}}(x^\beta)$, where $\beta$ runs through $\Delta$, 
  generates the $B_{\mathfrak{p}}$-module $Q_{\mathfrak{p}}$. 
  As the rank of $Q_{\mathfrak{p}}$ is $d=\#\Delta$, this family is even a $B_{\mathfrak{p}}$-basis.
  
  Therefore the composition 
  \begin{equation*}
    \phi_{\mathfrak{p}}\circ\iota_{\Delta}:B_{\mathfrak{p}}x^\Delta
    \to B_{\mathfrak{p}}[x]\to Q_{\mathfrak{p}}=\oplus_{i=1}^dB_{\mathfrak{p}}\epsilon_{i}
  \end{equation*}
  is an isomorphism. Going from left to right, we write the image of the basis element 
  $x^\gamma$ under the composition as
  \begin{equation}\label{there}
    (\phi_{\mathfrak{p}}\circ\iota_{\Delta})(x^\beta)=\sum_{i=1}^d\frac{c_{\beta,i}}{g_{\beta,i}}\epsilon_{i}\,.
  \end{equation}
  Going from right to left, we write the image of the basis element $\epsilon_{i}$ as
  \begin{equation}\label{back}
    (\phi_{\mathfrak{p}}\circ\iota_{\Delta})^{-1}(\epsilon_{i})
    =\sum_{\beta\in\Delta}\frac{d_{i,\beta}}{h_{i,\beta}}x^\beta\,.
  \end{equation}
  Here all $g_{\beta,i}$ and all $h_{i,\beta}$ lie in $B-\mathfrak{p}$. We set 
  \begin{equation*}
    h=(\prod_{\beta\in\Delta}\prod_{i=1}^dg_{\beta,i})\cdot
    (\prod_{i=1}^d\prod_{\beta\in\Delta}h_{i,\beta})
  \end{equation*}
  and $g=fh$. (Remember that $f$ is the element of $B-\mathfrak{p}$ with respect to which we localized earlier.)
  Then $B_{g}=(B_{f})_{h}$ and therefore $Q_{g}=\oplus_{i=1}^dB_{g}\epsilon_{i}$. 
  Formulas \eqref{there} and \eqref{back} define homomorphisms 
  \begin{equation*}
    B_{\mathfrak{p}}x^\Delta\to\oplus_{i=1}^dB_{\mathfrak{p}}\epsilon_{i}
  \end{equation*}
  and
  \begin{equation*}
    \oplus_{i=1}^dB_{\mathfrak{p}}\epsilon_{i}\to B_{\mathfrak{p}}x^\Delta\,,
  \end{equation*}
  resp., which are obviously inverses of each other. 
  The first assertion of the proposition is proved. 
    
  As for the second assertion, we fix a standard set $\Pi$ of size $d$ and 
  consider the ideal $I=(x^\alpha;\alpha\in\mathbb{N}^n-\Pi)\subset S$. 
  Then clearly $S/I$ is an element of $\HHi^\Pi_{S/k}$. 
  For all standard sets $\Delta\neq\Pi$ of the same size, 
  there exists an element $\beta\in\Pi-\Delta$. Therefore $x^\beta+I$ is zero in $S/I$, 
  and the family $(x^\beta+I)_{\beta\in\Delta}$ is not a $k$-basis of $S/I$. 
  It follows that the functors $\HHi^\Delta_{S/k}$, for all $\Delta\neq\Pi$, 
  do not suffice to cover all of $\HHi^d_{S/k}$.
\end{proof}

As was mentioned in the Introduction, the statement of Proposition \ref{cover} is implicit in \cite{huibregtse}, 
but not explicitly proved there. 


\section{Gr\"obner bases in the standard subfunctors}\label{bases}

Let us further investigate the functor $\HHi^\Delta_{S/k}$. 
Let $B$ be a $k$-algebra, $\mathfrak{p}\subset B$ a prime ideal and $\phi\in\HHi^\Delta_{S/k}(B)$. 
In the course of the proof of Proposition \ref{cover}, 
we made use of polynomials lying in the ideal $\ker\phi_{\kappa}$. 
Since $\Delta$ is the standard set attached to the ideal $\ker\phi_{\kappa}$, 
each element of the reduced Gr\"obner basis of $\ker\phi_{\kappa}$ can be expressed as 
\begin{equation}\label{kernel}
  f_{\alpha}=x^\alpha+\sum_{\beta\in\Delta}c_{\alpha,\beta}x^\beta\,,
  \text{ where }c_{\alpha,\beta}=0\text{ if }\alpha\prec\beta\,.
\end{equation}
The latter condition guarantees that ${\rm LE}(f_{\alpha})=\alpha$. 
A priori a polynomial as in \eqref{kernel} exists only for all $\alpha\in\mathscr{C}(\Delta)$.
The collection $\{f_{\alpha};\alpha\in\mathscr{C}(\Delta)\}$ is the reduced Gr\"obner basis, which is unique. 
Therefore the polynomial of \eqref{kernel} is unique for all $\alpha\in\mathscr{C}(\Delta)$.
The following lemma (applied to $R=\kappa$, $I=\ker\phi_{\kappa}$) 
implies that a unique polynomial as in \eqref{kernel} exists for all $\alpha\in\mathbb{N}^n-\Delta$. 

\begin{lmm}\label{favouritelemma}
  Let $R$ be a ring and $\Delta$ a standard set. 
  Assume that for all $\xi\in\mathscr{C}(\Delta)$, there exists a monic $f_{\xi}\in R[x]$ such that
  ${\rm LE}(f_{\xi})=\xi$ and all non-leading exponents of $f_{\xi}$ lie in $\Delta$. 
  Define $I$ to be the ideal $(f_{\xi};\,\xi\in\mathscr{C}(\Delta))$ in $R[x]$.
  Then the following statements hold.
  \begin{enumerate}
    \item[(i)] For all $\alpha\in\mathbb{N}^n-\Delta$, there exists a unique $f_{\alpha}\in I$ 
      such that ${\rm LE}(f_{\alpha})=\alpha$ and all non-leading exponents of $f_{\alpha}$ lie in $\Delta$.
    \item[(ii)] All coefficients of all $f_{\alpha}$ are polynomial expressions with coefficients in $\mathbb{Z}$
      of ${\rm coef}(f_{\alpha},x^\beta)$, for $\alpha\in\mathscr{C}(\Delta)$, 
      $x^\beta\in{\rm supp}(f_{\alpha})$. 
    \item[(iii)] If ${\rm LE}(I)=\mathbb{N}^n-\Delta$, then $I$ is monic with reduced Gr\"obner basis
      $(f_{\xi})_{\xi\in\mathscr{C}(\Delta)}$. 
      Moreover, the family $(f_{\alpha})_{\alpha\in\mathbb{N}^n-\Delta}$ is an $R$-basis of the module $I$.
  \end{enumerate}
\end{lmm}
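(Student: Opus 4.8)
The plan is to construct the polynomials $f_\alpha$ for $\alpha \in \mathbb{N}^n - \delta$ by induction on a suitable well-ordering of $\mathbb{N}^n - \delta$, and simultaneously read off the integrality and basis statements from the construction. First I would fix a total order on $\mathbb{N}^n - \delta$ refining the partial order by divisibility of monomials (for instance the restriction of $\prec$, which is a well-order on $\mathbb{N}^n - \delta$ since for each $\alpha$ there are only finitely many $\beta \prec \alpha$ that also dominate one of the finitely many corners; alternatively, order by total degree and break ties by $\prec$). The base cases are the corners $\alpha \in \mathscr{C}(\delta)$, where $f_\alpha$ is given by hypothesis. For a non-corner $\alpha \in \mathbb{N}^n - \delta$, pick some $i$ with $\alpha - e_i \in \mathbb{N}^n - \delta$; by induction $f_{\alpha - e_i}$ has been constructed, so set $\tilde f = x_i f_{\alpha - e_i} \in I$, which is monic with ${\rm LE}(\tilde f) = \alpha$. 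Its non-leading exponents are either already in $\delta$ or of the form $\beta + e_i$ with $\beta \in \delta$, $\beta + e_i \notin \delta$, hence $\beta + e_i \in \mathscr{B}(\delta) \subset \mathbb{N}^n - \delta$ and $\beta + e_i \prec \alpha$ (since $\beta \prec \alpha - e_i$ forces $\beta + e_i \prec \alpha$); subtracting the appropriate multiple of the already-constructed $f_{\beta + e_i}$ for each such exponent clears it, and one iterates. Because each subtraction replaces an offending exponent by exponents strictly smaller in the well-order, the process terminates, yielding $f_\alpha \in I$ with ${\rm LE}(f_\alpha) = \alpha$ and all other exponents in $\delta$. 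Uniqueness is immediate: the difference of two such polynomials lies in $I$ and, if nonzero, has leading exponent in $\delta$; but an element of $I$ whose support lies entirely in $\delta$ must be zero, because reducing it against the $f_\gamma$ shows its image in $R[x]/I$ — where the classes of $x^\beta$, $\beta \in \delta$, are $R$-linearly independent once we know this, or more directly: its leading exponent would have to be in ${\rm LE}(I)$, contradiction in the case ${\rm LE}(I) = \delta$, and in general one argues that any element of $I$ has leading exponent in ${\rm LE}(I) \subseteq \mathbb{N}^n - \delta$. Let me restate uniqueness cleanly: if $g \in I$ with ${\rm LE}(g) \in \delta$, then $\delta \ni {\rm LE}(g) \in {\rm LE}(I)$, and since $\mathbb{N}^n - \delta$ is always contained in... — more carefully, one shows directly that the only element of $I$ supported on $\delta$ is $0$ by a division argument using the monic $f_\gamma$ with ${\rm LE}(f_\gamma) = \gamma \in \mathbb{N}^n-\delta$, since any nonzero such element would have its leading monomial divisible by no $x^\gamma$, hence could not lie in the ideal generated by polynomials with those leading monomials — this needs the monic hypothesis and a Gröbner-type argument, which is exactly what part (iii) packages.

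For part (ii), I would observe that the inductive construction above only ever multiplies previously constructed polynomials by a single variable $x_i$ and subtracts integer-coefficient combinations of the coefficients appearing; unwinding the recursion, every coefficient of every $f_\alpha$ is a polynomial with $\mathbb{Z}$-coefficients in the quantities ${\rm coef}(f_\gamma, x^\beta)$ for $\gamma \in \mathscr{C}(\delta)$. The only subtlety is to check that no division occurs — and indeed it does not, because at each step the exponent we clear appears in $\tilde f$ with some coefficient $c$, and we subtract $c \cdot f_{\beta + e_i}$, whose leading coefficient is $1$ (monicity), so $c$ itself is the multiplier and it is already a polynomial in the base data by induction. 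This is precisely where monicity is essential. For part (iii), assuming ${\rm LE}(I) = \delta$ — I mean ${\rm LE}(I) = \mathbb{N}^n - \delta$, matching the paper's convention that $\delta$ is the \emph{complement} — the polynomials $f_\alpha$ for $\alpha \in \mathscr{C}(\delta)$ are monic with the correct leading exponents and reduced-form supports, so by definition they form the reduced Gröbner basis (using that ${\rm LE}(I) = \mathbb{N}^n - \delta$ is generated as a monoid ideal by $\mathscr{C}(\delta)$), and in particular $I$ is monic. That $(f_\alpha)_{\alpha \in \mathbb{N}^n - \delta}$ is an $R$-basis of $I$ follows from the standard fact that, via division with respect to a Gröbner basis, every element of $I$ reduces to $0$ through the $f_\alpha$'s, giving a unique expression as a finite $R$-linear combination of the $f_\alpha$ with the degree bookkeeping controlled by leading exponents; the $f_\alpha$ are $R$-linearly independent because their leading exponents are distinct.

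The main obstacle I anticipate is \emph{uniqueness} in part (i) and, relatedly, the linear independence and spanning in part (iii): both rest on the assertion that the only element of $I$ with support contained in $\delta$ is $0$. Over a field this is the familiar statement that the standard monomials form a basis of the quotient, but over an arbitrary ring $R$ it requires care, because one cannot freely invert leading coefficients — this is exactly why the monicity hypothesis (leading coefficient $1$ for the generators, and hence, by part (ii)'s construction, for all $f_\alpha$) is built into the statement. The clean way to handle it is to set up a division algorithm in $R[x]$ using the monic $f_\gamma$: given $g \in R[x]$, one repeatedly cancels the leading term of $g$ against $x^{\alpha - {\rm LE}(f_\gamma)} f_\gamma$ for a suitable $\gamma \in \mathbb{N}^n - \delta$ dividing ${\rm LE}(g)$, which is always possible when ${\rm LE}(g) \in \mathbb{N}^n - \delta$ precisely because $f_\gamma$ is monic; this terminates (again by well-ordering) with a remainder supported on $\delta$. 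Then $g \in I$ together with ${\rm LE}(I) = \mathbb{N}^n - \delta$ forces the remainder to be $0$. Once this division algorithm is in place, uniqueness in (i), the Gröbner basis property and the $R$-basis claim in (iii) all fall out, and the only remaining routine verification is to track that the construction in (i) is consistent with the one used in the proof of Proposition \ref{cover}, which it is by the uniqueness just established.
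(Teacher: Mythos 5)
The paper does not actually prove this lemma; it refers to the inductive construction appearing later in the proof of Proposition~\ref{primeconnected} as ``essentially the whole proof.'' Your existence construction in part~(i) --- work in $\prec$-order, multiply $f_{\alpha-e_i}$ by $x_i$ and clear the offending border exponents using earlier $f_{\beta+e_i}$, with monicity guaranteeing no divisions --- matches that construction (and your claim that the recursion has $\mathbb{Z}$-polynomial coefficients, hence part~(ii), is right for the same reason). One minor remark: a single pass of subtractions already lands the support in $\{\alpha\}\cup\delta$, so no iteration is needed. Your treatment of part~(iii) via the unitriangular change of basis is also fine.

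The real problem is uniqueness in part~(i). You correctly isolate the crux --- one must show the only element of $I$ supported on $\delta$ is $0$ --- but you then resolve it by invoking ${\rm LE}(I)=\mathbb{N}^n-\delta$, which is the hypothesis of part~(iii), \emph{not} of part~(i). That gap is not cosmetic: without that hypothesis the uniqueness claim is actually false. Take $R=\mathbb{Q}$, $n=2$, $\delta=\{(0,0),(1,0),(0,1)\}$, so $\mathscr{C}(\delta)=\{(2,0),(1,1),(0,2)\}$, and set $f_{(2,0)}=x_1^2$, $f_{(1,1)}=x_1x_2$, $f_{(0,2)}=x_2^2-1$. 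All three are monic with leading exponent a corner and non-leading exponents in $\delta$, so the hypothesis of the lemma holds. But $x_1=x_2\cdot f_{(1,1)}-x_1\cdot f_{(0,2)}\in I$, hence both $x_1x_2$ and $x_1x_2+x_1$ lie in $I$, are monic with leading exponent $(1,1)$, and have all non-leading exponents in $\delta$. Uniqueness fails; equivalently, $I\cap\bigoplus_{\beta\in\delta}Rx^\beta\neq 0$ and ${\rm LE}(I)\supsetneq\mathbb{N}^n-\delta$. (One can even see the non-uniqueness directly from the recursion: computing $f_{(1,2)}$ via $x_1f_{(0,2)}$ gives $x_1x_2^2-x_1$, while via $x_2f_{(1,1)}$ it gives $x_1x_2^2$.) So the argument you offer cannot be completed as it stands; either the uniqueness in~(i) must be relativized to the situations in which the paper actually applies it (where an ambient ideal with free $\delta$-quotient is already in hand, supplying $I\cap\bigoplus_{\beta\in\delta}Rx^\beta=0$ for free), or part~(i) needs the hypothesis of part~(iii) moved up into the general assumptions. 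A correct write-up should flag this explicitly rather than fold it into a ``once this is in place'' remark.
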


This lemma is apparently well-known, at least in the case where $R$ is field. 
However, it is hard to find a reference for it in the literature, 
as was mentioned in the discussion after Lemma 15 in \cite{dplanes}.
Its proof boils down to an inductive construction of the polynomials $f_{\alpha}$. 

We have seen that by Nakayama's Lemma the family of all $x^\beta$, where $\beta$ runs through $\Delta$, 
is a $B_{\mathfrak{p}}$-basis of $B_{\mathfrak{p}}[x]/\ker\phi_{\mathfrak{p}}$. 
Therefore each polynomial $f_{\alpha}\in\ker\phi_{\kappa}$ as in \eqref{kernel}, 
for $\alpha\in\mathbb{N}-\Delta$, has a unique lift to an element
\begin{equation*}
  \widehat{f}_{\alpha}=x^\alpha+\sum_{\beta\in\Delta}\widehat{c}_{\alpha,\beta}x^\beta
\end{equation*}
of $\ker\phi_{\mathfrak{p}}$. However, though $c_{\alpha,\beta}=0$ for $\alpha\prec\beta$, 
the coefficients $\widehat{c}_{\alpha,\beta}$ need not be zero for $\alpha\prec\beta$. 

\begin{pro}\label{grobner}
  The ideal $\ker\phi_{\mathfrak{p}}$ is monic with Gr\"obner basis $\widehat{f}_{\alpha}$, 
  for $\alpha\in\mathscr{C}(\Delta)$, if, and only if, 
  $\widehat{c}_{\alpha,\beta}=0$ for all $\alpha\in\mathscr{C}(\Delta)$ 
  and for all $\beta\in\Delta$ such that $\alpha\prec\beta$. 
\end{pro}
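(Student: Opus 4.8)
The plan is to first reformulate the statement and record two facts, then treat the two implications separately; the ``only if'' direction is the subtle one. Write $R=B_{\mathfrak{m}}$ (a local ring with maximal ideal $\mathfrak{m}R$ and residue field $\kappa$) and $I=\ker\phi_{\mathfrak{m}}$. Since $\widehat{f}_{\alpha}=x^{\alpha}+\sum_{\beta\in\delta}\widehat{c}_{\alpha,\beta}x^{\beta}$ and $\alpha\notin\delta$, the condition ``$\widehat{c}_{\alpha,\beta}=0$ for all $\beta\in\delta$ with $\alpha\prec\beta$'' holds if and only if $\widehat{f}_{\alpha}$ is monic with ${\rm LE}(\widehat{f}_{\alpha})=\alpha$ and all non-leading exponents in $\delta$; so I shall prove that $I$ is monic with Gr\"obner basis $(\widehat{f}_{\alpha})_{\alpha\in\mathscr{C}(\delta)}$ if and only if ${\rm LE}(\widehat{f}_{\alpha})=\alpha$ for every corner $\alpha$. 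The first fact I shall use is that $R[x]=Rx^{\delta}\oplus I$, so that $I$ is the free $R$-module on $(\widehat{f}_{\mu})_{\mu\in\mathbb{N}^{n}-\delta}$ and each $\widehat{f}_{\mu}$ is the unique element of $I$ of the form $x^{\mu}+(\text{element of }Rx^{\delta})$. The second, which powers the hard direction, is that $\widehat{f}_{\mu}$ is a lift of $f_{\mu}\in\ker\phi_{\kappa}$, whose coefficient at $x^{\beta}$ vanishes for $\mu\prec\beta$ by \eqref{kernel}; hence $\widehat{c}_{\mu,\beta}\in\mathfrak{m}R$ whenever $\mu\prec\beta$, and in particular ${\rm LC}(\widehat{f}_{\mu})\in\mathfrak{m}R$ as soon as ${\rm LE}(\widehat{f}_{\mu})\neq\mu$.

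For the ``if'' direction, suppose ${\rm LE}(\widehat{f}_{\alpha})=\alpha$ for all $\alpha\in\mathscr{C}(\delta)$, so that $(\widehat{f}_{\alpha})_{\alpha\in\mathscr{C}(\delta)}$ satisfies the hypotheses of Lemma \ref{favouritelemma} over $R$; put $I'=(\widehat{f}_{\alpha};\,\alpha\in\mathscr{C}(\delta))\subseteq I$. By part (i) of the lemma, for each $\mu\in\mathbb{N}^{n}-\delta$ there is a monic $g_{\mu}\in I'$ with ${\rm LE}(g_{\mu})=\mu$ and non-leading exponents in $\delta$; being of the form $x^{\mu}+(\text{element of }Rx^{\delta})$ and lying in $I'\subseteq I$, it coincides with $\widehat{f}_{\mu}$. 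Thus $\widehat{f}_{\mu}\in I'$ and ${\rm LE}(\widehat{f}_{\mu})=\mu$ for all $\mu\in\mathbb{N}^{n}-\delta$; consequently $Rx^{\delta}+I'=R[x]$, and since $Rx^{\delta}\cap I'\subseteq Rx^{\delta}\cap I=0$ we obtain $R[x]=Rx^{\delta}\oplus I'$, whence $I'=I$ by comparison with $R[x]=Rx^{\delta}\oplus I$. Expanding an arbitrary nonzero element of $I$ in the basis $(\widehat{f}_{\mu})_{\mu\notin\delta}$ and reading off its leading exponent (the largest index $\mu$ that occurs, by ${\rm LE}(\widehat{f}_{\mu})=\mu$) gives ${\rm LE}(I)=\mathbb{N}^{n}-\delta$, so Lemma \ref{favouritelemma}(iii) applies and shows that $I$ is monic with reduced Gr\"obner basis $(\widehat{f}_{\alpha})_{\alpha\in\mathscr{C}(\delta)}$, a fortiori a Gr\"obner basis.

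For the ``only if'' direction, suppose $I$ is monic with Gr\"obner basis $\{\widehat{f}_{\alpha};\,\alpha\in\mathscr{C}(\delta)\}$, and assume towards a contradiction that ${\rm LE}(\widehat{f}_{\alpha_{0}})=\beta_{0}\succ\alpha_{0}$ for some $\alpha_{0}\in\mathscr{C}(\delta)$. The exponents appearing in $\widehat{f}_{\alpha_{0}}$ are $\alpha_{0}$ and certain elements of $\delta$, so $\beta_{0}\in\delta$ and ${\rm LC}(\widehat{f}_{\alpha_{0}})=\widehat{c}_{\alpha_{0},\beta_{0}}\in\mathfrak{m}R$. Because $I$ is monic, the ideal generated by ${\rm LT}(I)$ is the monomial ideal generated by $\{x^{\mu};\,\mu\in{\rm LE}(I)\}$, and by the Gr\"obner basis property it equals the ideal generated by $\{{\rm LT}(\widehat{f}_{\alpha});\,\alpha\in\mathscr{C}(\delta)\}$; since $x^{\beta_{0}}={\rm LM}(\widehat{f}_{\alpha_{0}})$ lies in it, there is an identity
\begin{equation*}
  x^{\beta_{0}}=\sum_{\alpha\in\mathscr{C}(\delta)}p_{\alpha}(x)\,{\rm LC}(\widehat{f}_{\alpha})\,x^{{\rm LE}(\widehat{f}_{\alpha})}\qquad(p_{\alpha}\in R[x])\,.
\end{equation*}
Comparing coefficients of $x^{\beta_{0}}$ shows that $1$ lies in the ideal of $R$ generated by those ${\rm LC}(\widehat{f}_{\alpha})$ for which $x^{{\rm LE}(\widehat{f}_{\alpha})}$ divides $x^{\beta_{0}}$. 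Now ${\rm LC}(\widehat{f}_{\alpha})=1$ when ${\rm LE}(\widehat{f}_{\alpha})=\alpha$, and ${\rm LC}(\widehat{f}_{\alpha})\in\mathfrak{m}R$ otherwise; as $\mathfrak{m}R$ is proper, some corner $\alpha$ must satisfy ${\rm LE}(\widehat{f}_{\alpha})=\alpha$ and $x^{\alpha}\mid x^{\beta_{0}}$, i.e.\ $\alpha\le\beta_{0}$ componentwise. But $\beta_{0}\in\delta$ and standard sets are closed under passing to componentwise-smaller exponents, so $\alpha\in\delta$, contradicting $\alpha\in\mathscr{C}(\delta)\subseteq\mathbb{N}^{n}-\delta$. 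Hence ${\rm LE}(\widehat{f}_{\alpha})=\alpha$ for all $\alpha\in\mathscr{C}(\delta)$, which is the claim.

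The main obstacle is locating, in the ``only if'' direction, the right use of the local ring $B_{\mathfrak{m}}$: the coefficients $\widehat{c}_{\alpha,\beta}$ with $\alpha\prec\beta$ are generally nonzero, but they do lie in the maximal ideal, and it is precisely this smallness that, after reducing the monomial relation among leading terms modulo $\mathfrak{m}B_{\mathfrak{m}}$, forces one of the leading terms to be the expected one and yields the contradiction. A minor point to watch in the ``if'' direction is that the hypothesis ${\rm LE}(I)=\mathbb{N}^{n}-\delta$ of Lemma \ref{favouritelemma}(iii) genuinely has to be derived (from $R[x]=Rx^{\delta}\oplus I$ and the leading-exponent identities for the $\widehat{f}_{\mu}$) rather than assumed.
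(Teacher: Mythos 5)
Your proof is correct. The paper disposes of this proposition in one line (``This is a consequence of Lemma~\ref{favouritelemma}''), so it is worth spelling out how your argument relates to that pointer. Your ``if'' direction is exactly the intended application: once the $\widehat{f}_{\alpha}$ are monic with ${\rm LE}(\widehat{f}_{\alpha})=\alpha$, Lemma~\ref{favouritelemma}(i) produces the $\widehat{f}_{\mu}$ for all $\mu\in\mathbb{N}^{n}-\delta$ inside $I'=(\widehat{f}_{\alpha};\alpha\in\mathscr{C}(\delta))$, the decomposition $B_{\mathfrak m}[x]=B_{\mathfrak m}x^{\delta}\oplus\ker\phi_{\mathfrak m}$ forces $I'=\ker\phi_{\mathfrak m}$ and ${\rm LE}(\ker\phi_{\mathfrak m})=\mathbb{N}^{n}-\delta$, and part (iii) finishes. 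Your ``only if'' direction, however, is genuinely \emph{not} a direct consequence of the lemma (the lemma's hypotheses already assume the $f_{\alpha}$ are monic of the right shape, which is what one must prove here), and you correctly identify and use the decisive extra input: $B_{\mathfrak m}$ is local and the coefficients $\widehat c_{\alpha,\beta}$ with $\alpha\prec\beta$ reduce to zero in $\kappa$, hence lie in $\mathfrak m B_{\mathfrak m}$. Your argument via comparing coefficients in a monomial relation among the ${\rm LT}(\widehat f_{\alpha})$ is sound; an alternative, perhaps closer to the spirit of the rest of the section, is to note that monicity of $\ker\phi_{\mathfrak m}$ together with $\ker\phi_{\mathfrak m}\otimes_{B_{\mathfrak m}}\kappa=\ker\phi_{\kappa}$ (the sequence $0\to\ker\phi_{\mathfrak m}\to B_{\mathfrak m}[x]\to Q_{\mathfrak m}\to 0$ is split since $Q_{\mathfrak m}$ is free) forces the standard set of $\ker\phi_{\mathfrak m}$ to be exactly $\delta$, whence by uniqueness in Lemma~\ref{favouritelemma}(i) the $\widehat f_{\alpha}$ coincide with the reduced Gr\"obner basis and therefore have ${\rm LE}(\widehat f_{\alpha})=\alpha$. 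Either route works; yours is more elementary in that it stays entirely at the level of leading terms and the local ring.
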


\begin{proof}
  This is a consequence of Lemma \ref{favouritelemma}. 
\end{proof}

In the complementary case, the set $\{f_{\alpha};\alpha\in\mathscr{B}(\Delta)\}$ is still the {\it border basis}
of $\ker\phi_{\mathfrak{p}}$, in the terminology of \cite{krbook}, Section 6.4.  
In our context, border bases are best described as follows. 
Take a $k$-algebra $B$ and a $\phi:B[x]\to Q$ in $\HHi^{\Delta}_{S/k}$. 
Let $\alpha\in\mathbb{N}^n-\Delta$, then by \eqref{delta1}, there exist unique $d_{\alpha,\beta}\in B$, 
for $\beta\in\Delta$, such that 
\begin{equation*}
  x^\alpha+\sum_{\beta\in\Delta}d_{\alpha,\beta}x^\beta=0\in B[x]/\ker\phi\,,
\end{equation*}
or equivalently, 
\begin{equation}\label{border}
  f_{\alpha}=x^\alpha+\sum_{\beta\in\Delta}d_{\alpha,\beta}x^\beta\in\ker\phi\,.
\end{equation}
The collection $\{f_{\alpha};\alpha\in\mathscr{B}(\Delta)\}$ is the border basis of $\ker\phi$. 
If in addition $\ker\phi$ is monic with standard set $\Delta$, 
then Lemma \ref{favouritelemma} implies that 
the collection $\{f_{\alpha};\alpha\in\mathscr{C}(\Delta)\}$ is the reduced Gr\"obner basis of $\ker\phi$.
In this sense the notion of border bases is a generalization of the notion of Gr\"obner bases. 
The goal of the next section is to exhibit that observation in the language of Hilbert functors.


\section{The Gr\"obner subfunctors}\label{groebnersubfunctors}

In \cite{huibregtse}, Theorem 37, \cite{norge}, Theorem 5.4 and \cite{bertin}, Theorem 2.8, 
the authors show that the functor $\HHi^\Delta_{S/k}$ is representable by an affine scheme. 
We make use of this fact in this section, 
denoting by $\Hi^\Delta_{S/k}$ the representing scheme. 
(We will give explicit descriptions of the coordinate ring of this scheme in Sections \ref{affines} and \ref{examples}.)
Proposition \ref{grobner} suggests to consider the following elements of $\HHi^\Delta_{S/k}(B)$:

\begin{dfn}
  For each $k$-algebra $B$, let $\HHi^{\prec\Delta}_{S/k}(B)$ be the set of equivalence classes of 
  surjective $B$-algebra homomorphisms $\phi:B[x]\to Q$ such that $\ker\phi$ 
  has a reduced Gr\"obner basis of the form 
  \begin{equation}\label{eltsofkernel}
    f_{\alpha}=x^\alpha+\sum_{\beta\in\Delta,\,\beta\prec\alpha}d_{\alpha,\beta}x^\beta\,,
  \end{equation}
  where $\alpha$ runs through $\mathscr{C}(\Delta)$. 
\end{dfn}

As was mentioned in Section \ref{notation}, an ideal admits a reduced Gr\"obner basis if, and only if, it is monic. 
Moreover, the equivalence class of a surjective $B$-algebra homomorphism $\phi:B[x]\to Q$ is determined by its kernel, 
and as a monic ideal is determined by its reduced Gr\"obner basis. 
This gives us the following alternative characterizations of $\HHi^{\prec\Delta}_{S/k}(B)$ as: 
\begin{itemize}
  \item the set of equivalence classes of surjective $\phi:B[x]\to Q$ 
  such that $\ker\phi$ is a monic ideal with standard set $\Delta$. 
  \item the set of all monic ideals in $B[x]$ with standard set $\Delta$.
  \item the set of all reduced Gr\"obner bases in $B[x]$ with standard set $\Delta$.
\end{itemize}

\begin{lmm}\label{subfunctor}
  $\HHi^{\prec\Delta}_{S/k}$ is a subfunctor of $\HHi^{\Delta}_{S/k}$. 
\end{lmm}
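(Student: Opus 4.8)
The plan is to verify two things: that $\HHi^{\prime\,\delta}_{S/k}(B)$ is genuinely a subset of $\HHi^{\delta}_{S/k}(B)$ for every $k$-algebra $B$, and that this inclusion is compatible with the functorial structure, i.e. that $\HHi^{\prime\,\delta}_{S/k}$ is stable under the pullback maps $\HHi^{\delta}_{S/k}(B)\to\HHi^{\delta}_{S/k}(B')$ induced by $k$-algebra homomorphisms $B\to B'$. The first point is essentially recorded already in the discussion preceding the lemma, so the real content is the second.

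For the inclusion $\HHi^{\prime\,\delta}_{S/k}(B)\subset\HHi^{\delta}_{S/k}(B)$: let $\phi:B[x]\to Q$ represent an element of $\HHi^{\prime\,\delta}_{S/k}(B)$, so $\ker\phi$ is a monic ideal with standard set $\delta$ and reduced Gr\"obner basis $\{f_\alpha;\,\alpha\in\mathscr{C}(\delta)\}$ of the shape \eqref{eltsofkernel}. I would invoke Lemma \ref{favouritelemma}(iii), applied with $R=B$ and $I=\ker\phi$ (the hypotheses ${\rm LE}(I)=\mathbb{N}^n-\delta$ holding since $\delta$ is the standard set attached to $I$): it gives that $(f_\alpha)_{\alpha\in\mathbb{N}^n-\delta}$ is a $B$-basis of $I$, hence that the residue classes $(x^\beta+\ker\phi)_{\beta\in\delta}$ form a $B$-basis of $B[x]/\ker\phi\cong Q$. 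By the description \eqref{delta1} this says precisely that $\phi\in\HHi^{\delta}_{S/k}(B)$.

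For functoriality, let $\lambda:B\to B'$ be a $k$-algebra homomorphism and $\phi\in\HHi^{\prime\,\delta}_{S/k}(B)$ with reduced Gr\"obner basis $\{f_\alpha\}$ as in \eqref{eltsofkernel}. The pullback $\lambda_*\phi\in\HHi^{\delta}_{S/k}(B')$ is represented by $\phi\otimes_B B':B'[x]\to Q\otimes_B B'$, and its kernel contains the elements $f_\alpha^{\,\lambda}:=x^\alpha+\sum_{\beta\in\delta,\,\beta\prec\alpha}\lambda(d_{\alpha,\beta})x^\beta$, which still have the triangular shape \eqref{eltsofkernel} with ${\rm LE}(f_\alpha^{\,\lambda})=\alpha$. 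Since $\lambda_*\phi$ lies in $\HHi^{\delta}_{S/k}(B')$, the classes $(x^\beta+\ker(\phi\otimes B'))_{\beta\in\delta}$ are a $B'$-basis of $B'[x]/\ker(\phi\otimes B')$, so the monomials $x^\alpha$ for $\alpha\notin\delta$ all lie in ${\rm LM}(\ker(\phi\otimes B'))$; combined with the existence of the monic $f_\alpha^{\,\lambda}$, this forces $\ker(\phi\otimes B')$ to be monic with standard set exactly $\delta$ and, by Lemma \ref{favouritelemma}(iii) again, to have reduced Gr\"obner basis $\{f_\alpha^{\,\lambda};\,\alpha\in\mathscr{C}(\delta)\}$ of the required form. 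Hence $\lambda_*\phi\in\HHi^{\prime\,\delta}_{S/k}(B')$.

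The main obstacle is the bookkeeping in the last step: one must be careful that applying $\lambda$ to the coefficients of the $f_\alpha$ does not enlarge the leading exponent set or destroy monicity, and the clean way to see it is to note that membership in $\HHi^{\delta}_{S/k}(B')$ already pins down ${\rm LE}$ of the kernel (it must be $\mathbb{N}^n-\delta$, since $x^\delta$ is a basis of the quotient), after which Lemma \ref{favouritelemma}(iii) does the rest. Everything else is a direct unwinding of the definitions, so the proof is short once this point is phrased correctly.
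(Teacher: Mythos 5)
Your treatment of the inclusion $\HHi^{\prime\,\delta}_{S/k}(B)\subset\HHi^{\delta}_{S/k}(B)$ is fine (the paper uses the division algorithm, you use Lemma \ref{favouritelemma}(iii); they amount to the same thing), and for functoriality you correctly identify the reduction: define $f_\alpha^\lambda$ by applying $\lambda$ to the coefficients and show $\ker(\phi\otimes_B B')$ is monic with standard set exactly $\delta$. But the decisive step — that ${\rm LE}(\ker(\phi\otimes_B B'))\subseteq\mathbb{N}^n-\delta$, i.e.\ that no nonzero element of the kernel has leading exponent in $\delta$ — is only asserted (``this forces\dots''), and the justification you offer in the final paragraph is false. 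It is \emph{not} true that membership in $\HHi^{\delta}_{S/k}(B')$ alone pins down ${\rm LE}$ of the kernel. For a counterexample take $B'$ a field, $n=2$, the lexicographic order with $x_1\succ x_2$, $I=(x_1-x_2^2,\,x_2^3)$ and $\delta=\{(0,0),(1,0),(0,1)\}$: then $(1,\,x_1,\,x_2)$ is a $B'$-basis of $B'[x]/I$ (since $x_1\equiv x_2^2$), so this $I$ defines a point of $\HHi^{\delta}_{S/k}(B')$, yet ${\rm LE}(I)=\{(a,b):a\geq 1\text{ or }b\geq 3\}$ and the standard set attached to $I$ is $\{(0,0),(0,1),(0,2)\}$, not $\delta$.

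What actually closes the gap is the reduction argument the paper gives, and it uses the \emph{triangular shape} of the $f_\alpha^\lambda$, not merely the basis property. By Lemma \ref{favouritelemma}(i), for every $\alpha\in\mathbb{N}^n-\delta$ there is a unique monic $g_\alpha\in\ker(\phi\otimes_B B')$ with ${\rm LE}(g_\alpha)=\alpha$ and all non-leading exponents in $\delta$. Now take any nonzero $g\in\ker(\phi\otimes_B B')$ with leading term $cx^\mu$ and form
\begin{equation*}
  g' \;=\; g-\sum_{\beta\in\mathbb{N}^n-\delta,\ \beta\prec\mu}{\rm coef}(g,x^\beta)\,g_\beta\,.
\end{equation*}
Then $g'$ still lies in the kernel, has the same leading term $cx^\mu$, and has support contained in $\delta\cup\{\mu\}$. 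If $\mu$ were in $\delta$, this $g'$ would be a nontrivial $B'$-linear relation among the residue classes $(x^\beta+\ker(\phi\otimes_B B'))_{\beta\in\delta}$, contradicting that they form a $B'$-basis; hence $\mu\in\mathbb{N}^n-\delta$, and only then does Lemma \ref{favouritelemma}(iii) apply. Without this reduction your argument is incomplete, and the shortcut you propose in its place does not hold.
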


\begin{proof}
  Let $\phi:B[x]\to Q$ be an element of $\HHi^{\prec\Delta}_{S/k}(B)$. 
  The division algorithm (see \cite{cox}, Section 2, \S3) shows that the family $(x^\beta+\ker\phi)$, 
  where $\beta$ runs through $\Delta$, is a $B$-basis of $B[x]/\ker\phi$. 
  Therefore the family $\phi(x^\beta)$, where $\beta$ runs through $\Delta$, is a $B$-basis of $Q$. 
  Hence $\phi:B[x]\to Q$ is also an element of $\HHi^{\Delta}_{S/k}(B)$. 
  In particular, we may assume that $Q=Bx^\Delta$. 
  
  We show that $\HHi^{\prec\Delta}_{S/k}$ is a functor. Let
  \begin{equation*}
    \phi:B[x]\to Bx^\Delta
  \end{equation*}
  be an element of $\HHi^{\prec\Delta}_{S/k}(B)$ and
  $\psi:B\to A$ be a $k$-algebra homomorphism. 
  Tensoring is right exact, hence a surjective homomorphism
  \begin{equation*}
    \phi\otimes{\rm id}:A[x]\to Ax^\Delta\,.
  \end{equation*}
  We have to show that $\ker\phi\otimes{\rm id}$ is monic with standard set $\Delta$. For this, 
  we write the elements of the reduced Gr\"obner basis of $\ker\phi$ as in formula \eqref{eltsofkernel}.
  We define
  \begin{equation*}
    g_{\alpha}=x^\alpha+\sum_{\beta\in\Delta,\,\beta\prec\alpha}\psi(d_{\alpha,\beta})x^\beta\,,
  \end{equation*}
  for all $\alpha\in\mathscr{C}(\Delta)$. 
  Then clearly all $g_{\alpha}$ lie in $\ker(\phi\otimes{\rm id})$. By Lemma \ref{favouritelemma} (i), 
  we get a unique polynomial of the form 
  \begin{equation*}
    g_{\alpha}=x^\alpha+\sum_{\beta\in\Delta,\,\beta\prec\alpha}e_{\alpha,\beta}x^\beta
  \end{equation*}
  even for all $\alpha\in\mathbb{N}^n-\Delta$, 
  and in particular, all these $g_{\alpha}$ lie in $\ker(\phi\otimes{\rm id})$. 
  Now let $g$ be an arbitrary element of $\ker(\phi\otimes{\rm id})$. 
  Denote the leading term of $g$ by $cx^\mu$. 
  We have to show that $\mu$ lies in $\mathbb{N}^n-\Delta$, 
  as in this case, Lemma \ref{favouritelemma} (iii) guarantees that 
  $\ker(\phi\otimes{\rm id})$ is monic with standard set $\Delta$.
  Consider the polynomial 
  \begin{equation*}
    g^\prime=g-\sum_{\beta\in\mathbb{N}^n-\Delta,\,\beta\prec\mu}{\rm coef}(g,x^\beta)g_{\beta}\,.
  \end{equation*}
  Then $g^\prime$ lies in $\ker(\phi\otimes{\rm id})$;
  its support is contained in $\Delta\cup\{\mu\}$; and its leading term is $cx^\mu$. 
  However, as $\phi\otimes{\rm id}$ lies in $\HHi^{\prec\Delta}_{S/k}(A)$, 
  we know that the family $x^\beta+\ker(\phi\otimes{\rm id})$, 
  where $\beta$ runs through $\Delta$, is a basis of $A[x]/\ker(\phi\otimes{\rm id})$. 
  This shows that if $\mu\in\Delta$, then $c=0$, a contradiction. 
  Hence $\mu\in\mathbb{N}^n-\Delta$. 
\end{proof}

\begin{lmm}\label{zariskisheaf}
  $\HHi^{\prec\Delta}_{S/k}$ is a Zariski sheaf. 
\end{lmm}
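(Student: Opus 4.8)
The plan is to verify the sheaf axiom directly on the Zariski site: given a $k$-algebra $B$ together with elements $g_{1},\ldots,g_{m}\in B$ generating the unit ideal, I must show that
\[
  \HHi^{\prime\,\delta}_{S/k}(B)\longrightarrow\prod_{i}\HHi^{\prime\,\delta}_{S/k}(B_{g_{i}})\rightrightarrows\prod_{i,j}\HHi^{\prime\,\delta}_{S/k}(B_{g_{i}g_{j}})
\]
is an equalizer. Since $\HHi^{r}_{S/k}$ is representable it is in particular a Zariski sheaf, and by Lemma \ref{subfunctor} together with the openness of $\HHi^{\delta}_{S/k}$ (proved in Section \ref{subfunctors}), the functor $\HHi^{\delta}_{S/k}$ is a Zariski sheaf as well; since $\HHi^{\prime\,\delta}_{S/k}$ is a subfunctor of $\HHi^{\delta}_{S/k}$, injectivity of the first map is immediate. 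So the only real content is descent: given a compatible family $\phi_{i}\in\HHi^{\prime\,\delta}_{S/k}(B_{g_{i}})$, it glues to a unique $\phi\in\HHi^{\delta}_{S/k}(B)$ by the sheaf property of $\HHi^{\delta}_{S/k}$, and I must check $\phi$ actually lies in the subfunctor $\HHi^{\prime\,\delta}_{S/k}(B)$, i.e. that $\ker\phi$ is monic with standard set $\delta$.

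To do this I would use the characterization from Proposition \ref{grobner}: writing $\phi:B[x]\to Bx^{\delta}$ as in \eqref{delta3} and letting, for each corner $\alpha\in\mathscr{C}(\delta)$,
\[
  f_{\alpha}=x^{\alpha}+\sum_{\beta\in\delta}d_{\alpha,\beta}x^{\beta}\in\ker\phi
\]
be the border-basis element given by \eqref{border}, the ideal $\ker\phi$ is monic with standard set $\delta$ if and only if $d_{\alpha,\beta}=0$ whenever $\alpha\prec\beta$. Now the coefficients $d_{\alpha,\beta}\in B$ are determined by $\phi$ and their images in each $B_{g_{i}}$ are the corresponding border-basis coefficients of $\ker\phi_{i}$; since each $\phi_{i}$ lies in $\HHi^{\prime\,\delta}_{S/k}(B_{g_{i}})$, those images vanish for $\alpha\prec\beta$. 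As $B\to\prod_{i}B_{g_{i}}$ is injective (the $g_{i}$ generating the unit ideal), it follows that $d_{\alpha,\beta}=0$ in $B$ for all $\alpha\prec\beta$, hence $\phi\in\HHi^{\prime\,\delta}_{S/k}(B)$. Uniqueness of the gluing is inherited from $\HHi^{\delta}_{S/k}$.

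The one point requiring a little care — and the place I expect to spend most of the argument — is the claim that the border-basis coefficients are \emph{compatible with localization}, i.e. that $d_{\alpha,\beta}\otimes 1 = d^{(i)}_{\alpha,\beta}$ in $B_{g_{i}}$. This is where uniqueness in Lemma \ref{favouritelemma}(i) does the work: both $f_{\alpha}\otimes 1$ and $f^{(i)}_{\alpha}$ are elements of $\ker\phi_{i}$ of the form $x^{\alpha}+\sum_{\beta\in\delta}(\cdots)x^{\beta}$ with leading exponent $\alpha$ and all non-leading exponents in $\delta$, and $\ker\phi_{i}$ has $\mathrm{LE}=\mathbb{N}^{n}-\delta$ because $\phi_{i}\in\HHi^{\delta}_{S/k}(B_{g_{i}})$ forces the monomials $x^{\beta}$, $\beta\in\delta$, to be a basis modulo the kernel; hence Lemma \ref{favouritelemma}(i) gives $f_{\alpha}\otimes 1 = f^{(i)}_{\alpha}$, so the coefficients agree. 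Once this bookkeeping is set up, the rest is formal, and the proof closes.
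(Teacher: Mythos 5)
Your proof is correct, and it takes a genuinely different route from the paper's. The paper's proof glues the reduced Gr\"obner basis polynomials $f^{(i)}_{\alpha}$ directly via the quasi-coherent sheaf axiom for $B[x]\,\widetilde{}\,$ on ${\rm Spec}\,B$, produces $\phi:B[x]\to B[x]/(f_{\alpha})$ from scratch, and then re-verifies that the resulting kernel is monic with standard set $\delta$ by reprising the closing argument of Lemma \ref{subfunctor}. You instead leverage the fact that $\HHi^{\delta}_{S/k}$ is already known to be a Zariski sheaf (being a representable functor, or an open subfunctor of the representable $\HHi^{r}_{S/k}$), which hands you the glued $\phi\in\HHi^{\delta}_{S/k}(B)$ and its uniqueness for free; the content is then reduced to checking that the closed condition ``$d_{\alpha,\beta}=0$ for $\alpha\prec\beta$'' descends, which you do by injectivity of $B\to\prod_{i}B_{g_{i}}$ once you know the border-basis coefficients localize correctly. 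This is a cleaner conceptual division of labor (a closed subcondition of a sheaf is again a sheaf), and in fact it is the same philosophy that the paper uses via Proposition 2.9 of Haiman--Sturmfels in Theorem \ref{locallyclosed1} --- the student's proof essentially inlines that observation. The only slight imprecision is in the phrase ``with leading exponent $\alpha$'' applied to $f_{\alpha}\otimes 1$: before you have concluded $f_{\alpha}\otimes1=f^{(i)}_{\alpha}$, you do not yet know that $f_{\alpha}$'s leading exponent is $\alpha$ (some $d_{\alpha,\beta}$ with $\beta\succ\alpha$ could a priori be nonzero over $B$). But the uniqueness you need is just linear algebra: both $f_{\alpha}\otimes1$ and $f^{(i)}_{\alpha}$ have the form $x^{\alpha}+\sum_{\beta\in\delta}(\cdots)x^{\beta}$ and lie in $\ker\phi_{i}$, so their difference lies in the free $B_{g_{i}}$-span of $\{x^{\beta};\beta\in\delta\}$ and is killed by $\phi_{i}$, hence vanishes; invoking Lemma \ref{favouritelemma}(i) is stronger than necessary but does no harm.
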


\begin{proof}
  Let $B$ be a $k$-Algebra, $(U_{i}={\rm Spec}\,B_{g_{i}})_{i\in I}$ an open cover of ${\rm Spec}\,B$
  by distinguished open sets and $\phi_{i}\in\HHi^{\prec\Delta}_{S/k}(B_{g_{i}})$ such that for all $i,j$, 
  \begin{equation*}
    \phi_{i}\otimes{\rm id}:B_{g_{i}}\otimes_{B_{g_{i}}}B_{g_{j}}\to Q_{i}\otimes_{B_{g_{i}}}B_{g_{j}}
  \end{equation*}
  and 
  \begin{equation*}
    \phi_{j}\otimes{\rm id}:B_{g_{j}}\otimes_{B_{g_{j}}}B_{g_{i}}\to Q_{j}\otimes_{B_{g_{j}}}B_{g_{i}}
  \end{equation*}
  agree, i.e., define the same map
  \begin{equation*}
    \phi_{ij}:B_{g_{i}g_{j}}\to Q_{ij}=B_{g_{i}g_{j}}x^\Delta\,.
  \end{equation*}
  We write the elements of the reduced Gr\"obner basis of $\ker\phi_{i}$ and $\ker\phi_{j}$, resp., as
  \begin{equation*}
    \begin{split}
      f^{(i)}_{\alpha}&=x^\alpha+\sum_{\beta\in\Delta,\,\beta\prec\alpha}d^{(i)}_{\alpha,\beta}x^\beta\,,\\
      f^{(j)}_{\alpha}&=x^\alpha+\sum_{\beta\in\Delta,\,\beta\prec\alpha}d^{(j)}_{\alpha,\beta}x^\beta\,,
    \end{split}
  \end{equation*}
  resp., where $\alpha$ runs through $\mathscr{C}(\Delta)$. 
  From Lemma \ref{subfunctor} we know that 
  $\ker\phi_{ij}=\ker\phi_{i}\otimes{\rm id}=\ker\phi_{j}\otimes{\rm id}$ 
  is monic with standard set $\Delta$. 
  The images of $f^{(i)}_{\alpha}$ and $f^{(j)}_{\alpha}$, resp., in $B_{g_{i}g_{j}}[x]$ have the following properties:
  \begin{itemize}
    \item They lie in $\ker\phi_{ij}$. 
    \item Their leading exponent is $\alpha$.
    \item Their non-leading exponents lie in $\Delta$. 
  \end{itemize}
  Therefore they are the reduced Gr\"obner basis of $\ker\phi_{ij}$. 
  In particular, $f^{(i)}_{\alpha}$ and $f^{(j)}_{\alpha}$ agree on ${\rm Spec}\,B_{g_{i}g_{j}}$. 
  The sheaf axiom for the quasi-coherent sheaf $B[x]\,\,\widetilde{}\,$ on ${\rm Spec}\,B$ 
  provides a polynomial $f_{\alpha}\in B[x]$ whose image in $B_{g_{i}}[x]$ is $f^{(i)}_{\alpha}$ for all $i$. 
  It is clear that this polynomial takes the shape \eqref{eltsofkernel}. 
  Upon defining $I=(f_{\alpha};\alpha\in\mathscr{C}(\Delta))$ and $\phi:S\to Q=S/I$ to be the canonical map, 
  we have lifted the various homomorphisms $\phi_{i}$ to a homomorphism $\phi$. 
  The same line of arguments as at the end of the proof of Lemma \ref{subfunctor} shows that
  $I$ is monic with Gr\"obner basis $f_{\alpha}$, where $\alpha$ runs through $\mathscr{C}(\Delta)$. 
  Therefore $\phi$ lies in $\HHi^{\prec\Delta}_{S/k}(B)$.  
\end{proof}

\begin{thm}\label{locallyclosed1}
  $\HHi^{\prec\Delta}_{S/k}$ is represented by a closed subscheme 
  $\Hi^{\prec\Delta}_{S/k}$of $\Hi^{\Delta}_{S/k}$.
\end{thm}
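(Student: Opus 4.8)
The plan is to realize $\HHi^{\prime\,\delta}_{S/k}$ as a closed subfunctor of $\HHi^{\delta}_{S/k}$ and then invoke the standard representability machinery. By Lemma \ref{subfunctor} we already have an inclusion of functors $j:\HHi^{\prime\,\delta}_{S/k}\to\HHi^{\delta}_{S/k}$, and by Lemma \ref{zariskisheaf} the source is a Zariski sheaf. Since $\HHi^{\delta}_{S/k}$ is represented by the affine scheme $\Hi^{\delta}_{S/k}$, it suffices to show that $j$ is a \emph{closed} subfunctor, i.e.\ that for every $k$-algebra $B$ and every $\psi:h_{\operatorname{Spec}B}\to\HHi^{\delta}_{S/k}$ the fiber product $h_{\operatorname{Spec}B}\times_{\HHi^{\delta}_{S/k}}\HHi^{\prime\,\delta}_{S/k}$ is represented by a closed subscheme of $\operatorname{Spec}B$ (cf.\ \cite{eisenbudharris}, Definition VI-5 and the criterion used in Section \ref{subfunctors} for open subfunctors). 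Once this is established, applying it to $B$ the coordinate ring of $\Hi^{\delta}_{S/k}$ and $\psi$ the universal element yields the representing closed subscheme $\Hi^{\prime\,\delta}_{S/k}$.

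First I would unwind the fiber product concretely, exactly as in Section \ref{subfunctors}. By Yoneda, $\psi$ corresponds to an element of $\HHi^{\delta}_{S/k}(B)$, i.e.\ a $B$-algebra homomorphism $\phi:B[x]\to Bx^\delta$ with $\phi\circ({\rm id}\otimes\iota_\delta)={\rm id}$. For each $\alpha\in\mathscr{B}(\delta)$ write the border-basis element $f_\alpha=x^\alpha+\sum_{\beta\in\delta}d_{\alpha,\beta}x^\beta\in\ker\phi$ as in \eqref{border}, with $d_{\alpha,\beta}\in B$. For a $k$-algebra homomorphism $\gamma:B\to A$, the base-changed element $\phi\otimes\gamma$ lies in $\HHi^{\prime\,\delta}_{S/k}(A)$ precisely when $\ker(\phi\otimes\gamma)$ is monic with standard set $\delta$; by Proposition \ref{grobner} (applied over $A$, with the border basis in place of the reduced Gr\"obner basis) and Lemma \ref{favouritelemma}, this happens if and only if $\gamma(d_{\alpha,\beta})=0$ for every corner $\alpha\in\mathscr{C}(\delta)$ and every $\beta\in\delta$ with $\alpha\prec\beta$. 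Hence, letting $J\subset B$ be the ideal generated by the finite set $\{d_{\alpha,\beta};\ \alpha\in\mathscr{C}(\delta),\ \beta\in\delta,\ \alpha\prec\beta\}$, the fiber product functor sends $A$ to the set of $\gamma:B\to A$ with $\gamma(J)=0$, which is exactly $h_{\operatorname{Spec}(B/J)}$. This is the inclusion of a closed subscheme, as required.

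The step I expect to require the most care is the ``if and only if'' in the previous paragraph: showing that vanishing of the $d_{\alpha,\beta}$ for $\alpha$ a \emph{corner} (and $\beta\succ\alpha$) already forces $\ker(\phi\otimes\gamma)$ to be monic with standard set exactly $\delta$, and conversely that monicity with standard set $\delta$ forces precisely those vanishings and no others. The forward direction is essentially Proposition \ref{grobner} together with Lemma \ref{favouritelemma}(iii): from the corner data one reconstructs, over $A$, polynomials $g_\alpha$ of the shape \eqref{eltsofkernel} for all $\alpha\in\mathbb{N}^n-\delta$, lying in $\ker(\phi\otimes\gamma)$, and the argument at the end of the proof of Lemma \ref{subfunctor} shows no leading exponent can land in $\delta$, so ${\rm LE}(\ker(\phi\otimes\gamma))=\mathbb{N}^n-\delta$; Lemma \ref{favouritelemma}(iii) then gives monicity and identifies the reduced Gr\"obner basis. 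For the converse, if $\ker(\phi\otimes\gamma)$ is monic with standard set $\delta$, its reduced Gr\"obner basis consists of polynomials supported on $\delta\cup\mathscr{C}(\delta)$ with leading exponents the corners, and these must agree with the base-changed border elements $\phi\otimes\gamma(f_\alpha)$ by uniqueness (Lemma \ref{favouritelemma}(i)); comparing supports forces $\gamma(d_{\alpha,\beta})=0$ for $\beta\succ\alpha$. One should also double-check that the choice of the ideal $J$ is independent of the identification $Q=Bx^\delta$ used to write down $\phi$, which follows because the border basis, hence each $d_{\alpha,\beta}$, is intrinsic to $\ker\phi$. Assembling these points gives the closed immersion $\Hi^{\prime\,\delta}_{S/k}\hookrightarrow\Hi^{\delta}_{S/k}$.
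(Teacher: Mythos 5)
Your proposal is correct and follows essentially the same route as the paper: both reduce to showing the condition of lying in $\HHi^{\prime\,\delta}_{S/k}$ after base change is a closed condition on $\operatorname{Spec} B$, both produce the closing ideal $J$ from the border-basis coefficients $d_{\alpha,\beta}$ with $\alpha\prec\beta$, and both use Lemma \ref{favouritelemma} together with the division argument from Lemma \ref{subfunctor} to show that vanishing of the corner-indexed coefficients already forces monicity with standard set $\delta$. The only cosmetic difference is that the paper invokes Proposition 2.9 of \cite{haimanbernd} (closed conditions $V_{B,\phi}$) where you unwind the fiber-product/Yoneda criterion directly, and the paper initially lets $\alpha$ range over all of $\mathbb{N}^n-\delta$ in the definition of $J$ before restricting to corners; these are equivalent formulations.
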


\begin{proof}
  We prove this by applying Proposition 2.9 of \cite{haimanbernd}. 
  For this we adopt two items of the terminology of the cited paper. 
  \begin{itemize}
    \item Let $B$ be an object of $(k\text{-Alg})$, and let a condition on morphisms 
      $\psi:B\to A$ in $(k\text{-Alg})$ be given. 
      We say that the condition is {\it closed} if there exists an ideal $J\subset B$ such that $\psi:B\to A$
      satisfies the condition if, and only if, $\psi$ factors through the canonical map $B\to B/J$. 
    \item Let $B$ be an object of $(k\text{-Alg})$ and the $B$-algebra homomorphism $\phi:B[x]\to Q$ 
      be an object of $\HHi^{\Delta}_{S/k}(B)$. 
      We say that a morphism $\psi:B\to A$ in $(k\text{-Alg})$ satisfies $V_{B,\phi}$
      if the $A$-algebra homomorphism $\HHi^{\Delta}_{S/k}(\psi)(\phi)$, 
      which is an element of $\HHi^{\Delta}_{S/k}(A)$, lies in $\HHi^{\prec\Delta}_{S/k}(A)$.
  \end{itemize}
  
  By Proposition 2.9 of \cite{haimanbernd}, the functor $\HHi^{\prec\Delta}_{S/k}$ 
  (which is a Zariski sheaf by Lemma \ref{zariskisheaf})
  is represented by a closed subscheme of $\HHi^{\Delta}_{S/k}$ if, and only if, 
  for all $B$ in $(k\text{-Alg})$ and all $\phi:B[x]\to Q$ in $\HHi^{\Delta}_{S/k}(B)$,
  the condition $V_{B,\phi}$ is closed. 
  
  Let $B$ and $\phi$ as above be given. 
  Then the family $(x^\beta+\ker\phi)_{\beta\in\mathscr{C}(\Delta)}$ 
  is a $B$-basis of $B[x]/\ker\phi$. By Lemma \ref{favouritelemma}, 
  there is a unique polynomial of the form 
  \begin{equation*}
    f_{\alpha}=x^\alpha+\sum_{\beta\in\Delta}d_{\alpha,\beta}x^\beta\in\ker\phi
  \end{equation*}
  for all $\alpha\in\mathbb{N}^n-\Delta$. Define $J\subset B$ to be the ideal generated by all $d_{\alpha,\beta}$, 
  where $\alpha$ runs through $\mathbb{N}^n-\Delta$ and $\beta$ runs through all elements of $\Delta$ 
  such that $\alpha\prec\beta$. 
  
  Given a morphism $\psi:B\to A$ in $(k\text{-Alg})$, 
  the homomorphism $\HHi^{\prec\Delta}_{S/k}(\psi)(\phi)$ is nothing but the tensor product
  $\phi\otimes{\rm id}:A[x]\to Q\otimes_{B}A$. 
  The polynomial
  \begin{equation*}
    \psi(f_{\alpha})=x^\alpha+\sum_{\beta\in\Delta}\psi(d_{\alpha,\beta})x^\beta
  \end{equation*}
  is the unique element of $\ker(\phi\otimes{\rm id})$ such that 
  its leading exponent is $\alpha$ and all non-leading exponents lie in $\Delta$. 
  Now $\psi:B\to A$ factors through $B\to B/J$ if, and only if, 
  for all $\alpha\in\mathscr{C}(\Delta)$ and all $\beta\in\Delta$ such that $\alpha\prec\beta$, 
  we have $\psi(d_{\alpha,\beta})=0$. 
  This is equivalent to the ideal $(\psi(f_{\alpha});\,\mathscr{C}(\Delta))\subset A[x]$ 
  being monic with reduced Gr\"obner basis $\{\psi(f_{\alpha});\alpha\in\mathscr{C}(\Delta)\}$. 
  Therefore $\psi:B\to A$ factors through $B\to B/J$ if, and only if, 
  $\ker(\phi\otimes{\rm id})$ is monic with Gr\"obner basis $\psi(f_{\alpha})$, 
  where $\alpha$ runs through $\mathscr{C}(\Delta)$. 
  We have proved that $V_{B,\phi}$ is a closed condition. 
\end{proof}


\section{The Gr\"obner strata}\label{decomposition}

We have proved that in the chain of inclusion \eqref{chain}
the first inclusion is a closed immersion and the second inclusion is an open immersion. 

\begin{dfn} 
  We call the locally closed subscheme $\Hi^{\prec\Delta}_{S/k}$ of $\Hi^{d}_{S/k}$ the 
  {\rm Gr\"obner stratum attached to the standard set $\Delta$}. 
\end{dfn}

Here is an example illustrating the difference between $\Hi^{\Delta}_{S/k}$ and $\Hi^{\prec\Delta}_{S/k}$. 

\begin{ex}\label{triangle}
  Let $k=\mathbb{Z}$, $\Delta=\{0,e_1,e_2\}\subset\mathbb{N}^2$ 
  and $\prec$ the lexicographic order on $S=k[x_1,x_2]$ such that $x_1\succ x_2$. The ideal 
  \begin{equation*}
    \begin{split}
      I_a=(&x_1^2+2x_1+2x_2+3\,,\\
      &x_1x_2+2x_1+2x_2+3\,,\\
      &{\bf x_2^2+2x_1+2x_2+3})\subset S
    \end{split}
  \end{equation*}
  lies in $\Hi^{\Delta}_{S/k}$ but not in $\Hi^{\prec\Delta}_{S/k}$. The ideal 
  \begin{equation*}
    \begin{split}
      I_b=(&x_1^2+2x_1+2x_2+3\,,\\
      &x_1x_2+2x_1+2x_2-4\,,\\
      &{\bf x_2^2+2x_2+2})\subset S
    \end{split}
  \end{equation*}
  lies in $\Hi^{\prec\Delta}_{S/k}$. 
  The difference stems from the coefficient of the term $x_1$ in the generators printed in boldface type. 
  That coefficient vanishes only for $I_b$ and not for $I_a$. 
  The given generators of $I_b$ are the reduced Gr\"obner basis of $I_b$. 
  The ideal $I_a$ does not have a reduced Gr\"obner basis if $\prec$ is the term order we chose. 
  However, if we replace that order by the graded lexicographic order, 
  then the given generators are the reduced Gr\"obner basis of $I_a$. 
  We will justify this in Example \ref{triangleagain}, see Section \ref{examples} below. 
\end{ex}

Gr\"obner strata and related objects have been studied by many authors, 
see \cite{evain1}, \cite{evain2} or \cite{altmannbernd}. 
The cited authors refer to these schemes as {\it Schubert schemes}, or {\it Schubert cells}. 
Their terminology is motivated by the analogy of the inclusion 
$\Hi^{\prec\Delta}_{S/k}\subset\Hi^{d}_{S/k}$ to the inclusion of a Schubert cell in the Grassmannian 
in the case where $\Delta$ is a subset of the standard basis $\{e_{1},\ldots,e_{n}\}\subset\mathbb{N}^n$, 
augmented by $0\in\mathbb{N}^n$. 
One interesting thing about Gr\"obner strata is the following statement.

\begin{thm}\label{coprod}
  As a topological space, the scheme $\Hi^{d}_{S/k}$ decomposes into locally closed strata as follows,
  \begin{equation}\label{stratahd}
    \Hi^{d}_{S/k}=\coprod_{\Delta}\Hi^{\prec\Delta}_{S/k}\,,
  \end{equation}
  where the disjoint union goes over all standard sets $\Delta\subset\mathbb{N}^n$ of size $d$. 
\end{thm}

\begin{proof}
  We have to show that each closed point of $\Hi^{d}_{S/k}$ lies in precisely one stratum $\Hi^{\prec\Delta}_{S/k}$. 
  Let $x:{\rm Spec}\,F\to\Hi^{d}_{S/k}$ be a closed point, $F$ a field. 
  We interpret this as an element of of $\Hi_{S/k}(F)$, i.e., as a surjective $F$-algebra homomorphism $\phi:F[x]\to Q$.
  The kernel of this homomorphism has a well-defined reduced Gr\"obner basis, 
  and a well-defined standard set $\Delta$. Therefore $x$ lies in $\Hi^{\prec\Delta}_{S/k}$, 
  and not in any $\Hi^{\prec\Pi}_{S/k}$, for $\Pi\neq\Delta$.
\end{proof}

Note that in general a non-closed point of $\Hi^{d}_{S/k}$ 
does not lie in any stratum $\Hi^{\prec\Delta}_{S/k}$ of \eqref{stratahd}. 
Indeed, a non-closed point of $\Hi^{d}_{S/k}$ lies in some $\Hi^{\Delta}_{S/k}$, 
and thus corresponds to a homomorphism $\phi:B[x]\to Q$ lying in $\HHi^{\Delta}_{S/k}(B)$, 
where $B$ is a ring rather than a field. 
In particular, for all $\alpha\in\mathscr{B}(\alpha)$, there exist $f_\alpha$ as in \eqref{border}. 
However, it may happen that $d_{\alpha,\beta}\neq0$ for some pair $\alpha\prec\beta$. 
If so, then the point does not lie in $\Hi^{\prec\Delta}_{S/k}$. 

\begin{ex}
  Consider the case $n=2$, $d=3$. There are three standard sets
  \begin{equation*}
    \Delta_1=\{0,e_1,2e_1\}\,,\,\Delta_2=\{0,e_1,e_2\}\,,\,\Delta_3=\{0,e_2,2e_2\}
  \end{equation*}
  of size $3$ in $\mathbb{N}^2$. In Examples \ref{triangleagain} and \ref{axis} below we will show that 
  the three corresponding open patches $\Hi^{\Delta_i}_{S/k}$ are all isomorphic to $\mathbb{A}^6$. 
  More precisely, $ \Hi^{\Delta_i}_{S/k}={\rm Spec}\,R^{\Delta_i}$, where
  \begin{equation*}
    \begin{split}
      R^{\Delta_1}&=k[T_{(3,0),\beta},T_{(0,1),\beta};\,\beta\in\Delta_1]\,,\\
      R^{\Delta_2}&=k[T_{(2,0),\beta},T_{(1,1),\beta},T_{(0,2),\beta};\,\beta\in\Delta_2-\{0\}]\,,\\
      R^{\Delta_3}&=k[T_{(1,0),\beta},T_{(3,0),\beta};\,\beta\in\Delta_1]\,.
    \end{split}
  \end{equation*}
  Let $K_i={\rm Frac}\,R^{\Delta_i}$ and $\eta_i={\rm Spec}\,K_i$ be the generic point of $\Hi^{\Delta_i}_{S/k}$. 
  The point $\eta_i$ of $\Hi^{\Delta_i}_{S/k}$ corresponds to the $K_i$-algebra $Q_i=K_i[x_1,x_2]/J_i$, where 
  \begin{equation*}
    \begin{split}
      J_1=(&x_1^3-T_{(3,0),(2,0)}x_1^2-T_{(3,0),(1,0)}x_1-T_{(3,0),(0,0)},\\
      &x_2-T_{(0,1),(2,0)}x_1^2-T_{(0,1),(1,0)}x_1-T_{(0,1),(0,0)})\,,\\
      J_2=(&x_1^2-T_{(2,0),(1,0)}x_1-T_{(2,0),(0,1)}x_2-T_{(2,0),(0,0)}\,\\
      &x_1x_2-T_{(1,1),(1,0)}x_1-T_{(1,1),(0,1)}x_2-T_{(1,1),(0,0)}\,\\
      &x_2^2-T_{(0,2),(1,0)}x_1-T_{(0,2),(0,1)}x_2-T_{(0,2),(0,0)})\,,\\
      J_3=(&x_1-T_{(1,0),(0,2)}x_2^2-T_{(1,0),(0,1)}x_2-T_{(1,0),(0,0)}\,,\\
      &x_2^3-T_{(0,3),(0,2)}x_2^2-T_{(0,3),(0,1)}x_2-T_{(0,3),(0,0)})\,.
    \end{split}
  \end{equation*}
  (The meaning of those $T_{\alpha,\beta}$ appearing in $J_i$ which are not generators of $R^{\Delta_i}$ 
  will be explained in Examples \ref{triangleagain} and \ref{axis} below.)
  We may assume that the term order satisfies $x_1\succ x_2$, thus in particular $x_1^3\succ x_2$. 
  Since $T_{(3,0),(0,1)}\neq0$ in $K_1$, we see that the given generators of $J_1$ are not a reduced Gr\"obner basis. 
  In fact, $J_1$ does not admit a reduced Gr\"obner basis. 
  Consequently $\eta_1$ does not lie in any stratum of \eqref{stratahd}. 
  Similarly, if $\prec$ is the lexicographic order, then $\eta_2$ does not lie in any stratum of \eqref{stratahd} either, 
  and if $\prec$ is the graded lexicographic order, then $\eta_3$ does lie in any stratum of \eqref{stratahd} either. 
\end{ex}


\section{Representing the functors}\label{affines}

We start this section by briefly reviewing the construction of the affine scheme $\Hi^{\Delta}_{S/k}$ 
given in \cite{norge} and \cite{bertin}, Theorem 2.8. 
$\HHi^{\Delta}_{S/k}(B)$ is the set of equivalence classes of 
$B$-algebra homomorphisms $\phi:B[x]\to Q$
such that the composition $Bx^\Delta\to B[x]\to Q$ is an isomorphism. 
Each equivalence class of $\phi:B[x]\to Q$ corresponds to precisely one $B$-algebra structure on the $B$-module $Bx^\Delta$. 
Therefore $\HHi^{\Delta}_{S/k}(B)$ is reinterpreted as the set of all 
$B$-algebra homomorphisms $\phi:B[x]\to Bx^\Delta$
such that $\phi\circ(\iota_{\Delta}\otimes{\rm id}):Bx^\Delta\to Bx^\Delta$ is the identity map. 
Now that we have free modules with bases, 
we identify $\phi$ with its matrix $(a_{\alpha\beta})_{\alpha\in\mathbb{N}^n\,,\,\beta\in\Delta}$,
which is given by 
\begin{equation}\label{amatrix}
  \phi(x^\alpha)=\sum_{\beta\in\Delta}a_{\alpha\beta}x^\beta\,,\text{ for all }\alpha\in\mathbb{N}^n\,.
\end{equation}
The condition $\phi\circ(\iota_{\Delta}\otimes{\rm id})={\rm id}$ says that
\begin{equation*}
  a_{\alpha\beta}=\delta_{\alpha\beta}\,,\text{ for all }\alpha\in\mathbb{N}^n\text{ and for all }\beta\in\Delta\,.
\end{equation*}
The $B$-module homomorphism $\phi$ is a $B$-algebra homomorphism if, and only if, it is multiplicative. 
This characterization will be used in the proof of Proposition \ref{representing} below. 
For the time being, we use another characterization:
The $B$-module homomorphism $\phi$ is a $B$-algebra homomorphism if, and only if, its kernel is an ideal in $B[x]$. 
It is easy to check that the family $x^\alpha-(\iota_{\Delta}\otimes{\rm id})\circ\phi(x^\alpha)$, 
where $x^\alpha$ runs through all monomials in $B[x]$, generates the $B$-module $\ker\phi$. 
Therefore $\ker\phi$ is an ideal in $B[x]$ if, and only if, 
\begin{equation*}
  \phi(x^\lambda(x^\alpha-(\iota_{\Delta}\otimes{\rm id})\circ\phi(x^\alpha)))=0\,,
  \text{ for all }\lambda,\alpha\in\mathbb{N}^n\,.
\end{equation*}
Upon expressing $\phi$ by its matrix and using the fact that 
$\iota_{\Delta}$ is the canonical inclusion, this condition reads as follows:
\begin{equation*}
  \sum_{\beta\in\Delta}(a_{\lambda+\alpha,\beta}
  -\sum_{\gamma\in\Delta}a_{\alpha,\gamma}a_{\lambda+\gamma,\beta})x^\beta=0\,,
  \text{ for all }\lambda,\alpha\in\mathbb{N}^n\,.
\end{equation*}
Since $Bx^\Delta$ is free with basis $x^\Delta$, this means that 
\begin{equation}\label{structural}
  a_{\lambda+\alpha,\beta}-\sum_{\gamma\in\Delta}a_{\alpha,\gamma}a_{\lambda+\gamma,\beta}=0\,,
  \text{ for all }\lambda,\alpha\in\mathbb{N}^n\text{ and for all }\beta\in\Delta\,.
\end{equation}
Clearly it suffices to let $x^\lambda$ run only through $x_{1},\ldots,x_{n}$. 
Therefore the functor $\HHi^{\Delta}_{S/k}$ is represented by the affine scheme
\begin{equation}\label{infinitepresentation}
  \Hi^{\Delta}_{S/k}={\rm Spec}\,R/I^\Delta\,,
\end{equation}
where $I^\Delta$ is the ideal 
\begin{equation*}
  \begin{split}
    I^\Delta&=(T_{\alpha,\beta}-\delta_{\alpha,\beta};\alpha\in\mathbb{N}^n,\beta\in\Delta)\\
    &+(T_{\lambda+\alpha,\beta}-\sum_{\gamma\in\Delta}T_{\alpha,\gamma}T_{\lambda+\gamma,\beta};
    \alpha\in\mathbb{N}^n,\lambda\in\{e_{1},\ldots,e_{n}\},\beta\in\Delta)
  \end{split}
\end{equation*}
in the polynomial ring $R=k[T_{\alpha,\beta};\alpha\in\mathbb{N}^n,\beta\in\Delta]$. 

The heart of the above described method for obtaining the coordinate ring of the scheme 
$\Hi^{\Delta}_{S/k}$ is the system of equations \eqref{structural}. 
These are the {\it structural equations} defining the multiplicative structure on the $B$-algebra $Bx^\Delta$. 
In contrast to this approach to the coordinate ring of $\Hi^{\Delta}_{S/k}$, 
the same ring is obtained by using a border basis variant of Buchberger's $S$-pair criterion in the articles 
\cite{huibregtse}, \cite{kk1}, \cite{kk2}, \cite{kkr}, \cite{krarticle} and \cite{robbiano}.
In those articles, finite presentations of the coordinate ring of $\Hi^{\Delta}_{S/k}$ are given. 
At the moment our approach seems weaker, as the presentation of \eqref{infinitepresentation} 
uses infinitely many generators and relations. 
In the next section we will see that our approach is in fact stronger. 
In Proposition \ref{representing} we derive a finite presentation of the coordinate ring $R/I$. 
In Theorem \ref{fewer} below we will derive an improvement on this proposition. 

We introduce the following notation. 
If $N\subset\mathbb{N}^n$ is a standard set, we write $N^{(1)}=\mathscr{B}(N)$ and, for all $i\geq1$, 
$N^{(i+1)}=\mathscr{B}(N\cup N^{(1)}\cup\ldots\cup N^{(i)})$. 

\begin{pro}\label{representing}
  Let $N$ be a standard set in $\mathbb{N}^n$ containing $\Delta$. 
  Then the functor $\HHi^{\Delta}_{S/k}$ is represented by the affine scheme 
  $\Hi^{\Delta}_{S/k}={\rm Spec}\,R^\Delta$, where
  \begin{equation*}
    R^\Delta=R/I^\Delta
  \end{equation*}
  and $I^\Delta=I^\Delta_{1}+I^\Delta_{2}+I^\Delta_{3}$ is the sum of the ideals
  \begin{equation*}
    \begin{split}
      I^\Delta_{1}&=(T_{\alpha,\beta}-\delta_{\alpha,\beta};\alpha\in N,\beta\in\Delta)\,,\\
      I^\Delta_{2}&=(T_{\alpha+\lambda,\beta}-\sum_{\gamma\in\Delta}T_{\alpha,\gamma}T_{\gamma+\lambda,\beta};\\
      &\alpha\in N\cup N^{(1)},\lambda\in\{e_{1},\ldots,e_{n}\}
      \text{ s.t. }\alpha+\lambda\in N\cup N^{(1)},\beta\in\Delta)\text{ and }\\
      I^\Delta_{3}&=(\sum_{\gamma\in\Delta}T_{\alpha,\gamma}T_{\gamma+\lambda,\beta}
      -\sum_{\gamma\in\Delta}T_{\alpha^\prime,\gamma}T_{\gamma+\lambda^\prime,\beta};\\
      &\alpha,\alpha^\prime\in N^{(1)},\lambda,\lambda^\prime\in\{e_{1},\ldots,e_{n}\}
      \text{ s.t. }\alpha+\lambda=\alpha^\prime+\lambda^\prime\in N^{(2)},\beta\in\Delta)
    \end{split}
  \end{equation*}
  in the polynomial ring $R=k[T_{\alpha,\beta};\alpha\in N\cup N^{(1)},\beta\in\Delta]$. 
\end{pro}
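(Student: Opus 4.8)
The plan is to identify the finite presentation of the statement with the infinite one obtained above. Write $R_{\infty}=k[T_{\alpha,\beta};\alpha\in\mathbb{N}^n,\beta\in\delta]$ and let $I_{\infty}\subset R_{\infty}$ be the ideal written $I$ in \eqref{infinitepresentation}, so that, by the discussion preceding the proposition, $\HHi^{\delta}_{S/k}$ is represented by $\mathrm{Spec}(R_{\infty}/I_{\infty})$. The polynomial ring $R$ of the statement is a subring of $R_{\infty}$ (retaining only the variables with $\alpha\in N\cup N^{(1)}$), and I want to show that the composite $\pi\colon R\hookrightarrow R_{\infty}\twoheadrightarrow R_{\infty}/I_{\infty}$ descends to an isomorphism $R/I^{\delta}\cong R_{\infty}/I_{\infty}$; this proves the claim. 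First I would record the elementary fact that $\gamma+e_{i}\in N\cup N^{(1)}$ whenever $\gamma\in\delta$ and $i\in\{1,\ldots,n\}$ (since $\delta\subset N$), so that the generators of $I^{\delta}_{2}$ and $I^{\delta}_{3}$ really are elements of $R$. Then $\pi$ kills $I^{\delta}$: the generators of $I^{\delta}_{1}$ are among the normalization relations of $I_{\infty}$; those of $I^{\delta}_{2}$ are instances of the structural relations \eqref{structural}; and each generator of $I^{\delta}_{3}$ is the difference of the two right-hand sides of \eqref{structural} attached to the common multi-index $\alpha+\lambda=\alpha^{\prime}+\lambda^{\prime}$, hence again lies in $I_{\infty}$. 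Thus $\pi$ induces $\bar\pi\colon R/I^{\delta}\to R_{\infty}/I_{\infty}$; and $\bar\pi$ is surjective, because the structural relations allow one to rewrite $T_{\alpha,\beta}$ modulo $I_{\infty}$, by induction on the \emph{level} of $\alpha$ (the least $j$ with $\alpha\in N\cup N^{(1)}\cup\cdots\cup N^{(j)}$, which is always finite since $0\in\delta$), as a polynomial in the variables of $R$.

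The heart of the argument is the construction of an inverse to $\bar\pi$. For each multi-index $\alpha$, I would define a polynomial $P_{\alpha,\beta}\in R$ by the recursion $P_{\alpha,\beta}=T_{\alpha,\beta}$ if $\alpha\in N\cup N^{(1)}$, and $P_{\alpha,\beta}=\sum_{\gamma\in\delta}P_{\alpha^{\prime},\gamma}\,T_{\gamma+\lambda,\beta}$ if $\alpha$ has level $j\geq2$, where $\alpha=\alpha^{\prime}+\lambda$ is a fixed decomposition with $\lambda\in\{e_{1},\ldots,e_{n}\}$ and $\alpha^{\prime}$ of level $j-1$ — such a decomposition exists because $N^{(j)}=\mathscr{B}(N\cup\cdots\cup N^{(j-1)})$, and one checks, using that the complements of the standard sets $N\cup\cdots\cup N^{(i)}$ are closed under addition, that $\alpha^{\prime}$ has level exactly $j-1$ for any such decomposition. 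This yields a candidate map $\Phi\colon R_{\infty}/I_{\infty}\to R/I^{\delta}$, $T_{\alpha,\beta}\mapsto P_{\alpha,\beta}+I^{\delta}$, and the task is to show $\Phi$ is well defined, which amounts to two congruences modulo $I^{\delta}$: \emph{(a)} $P_{\alpha,\beta}$ is independent of the chosen decomposition, and \emph{(b)} $P_{\alpha+\lambda,\beta}\equiv\sum_{\gamma\in\delta}P_{\alpha,\gamma}P_{\gamma+\lambda,\beta}$ for \emph{all} $\alpha\in\mathbb{N}^n$ and $\lambda\in\{e_{1},\ldots,e_{n}\}$ — the latter being precisely what is needed for $\Phi$ to annihilate the structural generators of $I_{\infty}$.

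I would prove \emph{(a)} and \emph{(b)} simultaneously by induction on the level. The base cases are built into the definitions: for multi-indices in $N\cup N^{(1)}$ the assertions hold by $I^{\delta}_{1}$ and $I^{\delta}_{2}$, and the first genuinely new instance of \emph{(a)} — two decompositions of a level-$2$ multi-index — is exactly $I^{\delta}_{3}$, the level count forcing both predecessors into $N^{(1)}$. In the inductive step, a ``diamond'' between two decompositions $\alpha=\alpha^{\prime}+e_{i}=\alpha^{\prime\prime}+e_{j}$ (with $i\neq j$) of a deeper multi-index is resolved by using the inductive hypothesis to rewrite $P_{\alpha^{\prime},\gamma}$ and $P_{\alpha^{\prime\prime},\gamma}$ through a common lower multi-index $\nu$ with $\alpha^{\prime}=\nu+e_{j}$ and $\alpha^{\prime\prime}=\nu+e_{i}$, whereupon the claim is reduced, for each $\eta\in\delta$, to the elementary commutation congruence $\sum_{\gamma\in\delta}T_{\eta+e_{j},\gamma}\,T_{\gamma+e_{i},\beta}\equiv\sum_{\gamma\in\delta}T_{\eta+e_{i},\gamma}\,T_{\gamma+e_{j},\beta}\pmod{I^{\delta}}$; according as $\eta+e_{i}+e_{j}$ lies in $N\cup N^{(1)}$ or in $N^{(2)}$, this congruence follows from $I^{\delta}_{2}$ (applied twice) or directly from $I^{\delta}_{3}$. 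Granting that $\Phi$ is well defined, the two composites are checked on generators: $\bar\pi\circ\Phi$ sends $T_{\alpha,\beta}$ to $P_{\alpha,\beta}$ reduced modulo $I_{\infty}$, which equals $T_{\alpha,\beta}$ modulo $I_{\infty}$ by a straightforward induction on the level using \eqref{structural}, and $\Phi\circ\bar\pi$ is the identity on the generators $T_{\alpha,\beta}$ of $R$ because $P_{\alpha,\beta}=T_{\alpha,\beta}$ for $\alpha\in N\cup N^{(1)}$. Hence $\bar\pi$ is an isomorphism, and the proposition follows.

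The step I expect to be the main obstacle is the simultaneous induction for \emph{(a)} and \emph{(b)}: one must show that the finitely many relations $I^{\delta}_{1},I^{\delta}_{2},I^{\delta}_{3}$ already force all of the infinitely many structural equations \eqref{structural}. Conceptually this is a confluence statement — any two ways of producing a monomial $x^{\alpha}$ out of $x^{\delta}$ by successive multiplications by the $x_{i}$ differ by a sequence of elementary commutations $x_{i}x_{j}=x_{j}x_{i}$, each of which takes place one level further out than $\alpha$ — but turning this into a clean induction requires keeping careful track of which intermediate multi-indices actually occur in the prescribed shells $N^{(j)}$, in particular of the sub-case in which a multi-index and one of its predecessors share the same level, and it is exactly there that the precise index ranges in the definitions of $I^{\delta}_{2}$ and $I^{\delta}_{3}$, together with the freedom to replace $\delta$ by a larger standard set $N$, come into play.
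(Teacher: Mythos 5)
Your proposal is correct and, despite the different packaging, is essentially the paper's argument. Where the paper builds the multiplicative structure on $Bx^{\delta}$ directly (extending the matrix of $\phi$ shell by shell through $N^{(1)},N^{(2)},\dots$ and checking at each stage that the extension is consistent), you reorganize the same content as an explicit ring isomorphism $R/I^{\delta}\cong R_{\infty}/I_{\infty}$, with an inductively defined retraction $\Phi$. The heart is identical in both: one must show that the finitely many relations in $I^{\delta}_{2}$ and $I^{\delta}_{3}$ already force all structural equations \eqref{structural}, and the mechanism is a confluence argument indexed by the shells $N^{(j)}$. Your ``elementary commutation congruence'' $\sum_{\gamma}T_{\eta+e_{j},\gamma}T_{\gamma+e_{i},\beta}\equiv\sum_{\gamma}T_{\eta+e_{i},\gamma}T_{\gamma+e_{j},\beta}$ for $\eta\in\delta$, resolved by whether $\eta+e_{i}+e_{j}$ lands in $N\cup N^{(1)}$ or in $N^{(2)}$, is precisely the computation the paper performs in its Steps 4--5 (factoring through a common $\nu\in\{e_{1},\dots,e_{n}\}$). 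The ``same-level'' subtlety you flag at the end --- where $\alpha$ and $\alpha+\lambda$ both sit in the same shell $N^{(j)}$ --- is the content of the paper's Step 4: the observation that one can always find $\nu\neq\lambda$ with $\alpha+\lambda-\nu$ one shell lower, which forces $\alpha-\nu\in\mathbb{N}^{n}$ and permits the detour. Your formulation makes the inductive bookkeeping somewhat more uniform (levels rather than separate steps for $N^{(2)},N^{(3)}$ and an induction), and has the modest advantage of reducing the statement to a purely algebraic isomorphism of finitely vs.\ infinitely presented rings; the paper's version stays closer to the modular-theoretic meaning of the equations. Both are complete once the diamond/confluence step is written out, and you have identified correctly where the real work lies. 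One small shared quirk: the normalization part of the ideal (whether $I^{\delta}_{1}$ or the first summand of the infinite presentation $I_{\infty}$) should range over $\alpha\in\delta$ only, not over all of $N$ or $\mathbb{N}^n$ --- otherwise the variables $T_{\alpha,\beta}$ with $\alpha\in N-\delta$ would be forced to vanish, which is not implied by $\phi\circ(\iota_{\delta}\otimes\mathrm{id})=\mathrm{id}$; you inherit this from the paper's own text, and it does not affect the structure of your argument, but is worth correcting when writing it out.
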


\begin{proof}
  Again we start with a $B$-module homomorphism $\phi:B[x]\to Bx^\Delta$, 
  represented by its matrix $(a_{\alpha,\beta})$ as in \eqref{amatrix}. 
  Our goal is to find constraints on the coefficients $a_{\alpha,\beta}$ 
  which guarantee that $Bx^\Delta$ has a multiplicative structure
  such that $\phi$ is a $B$-algebra homomorphism. 
  As was mentioned above, $\phi$ is a $B$-algebra homomorphism if, and only if, $\phi$ is multiplicative. 
  By linearity of $\phi$, this is equivalent to $\phi(x^{\alpha+\beta})=\phi(x^\alpha)\phi(x^\beta)$ 
  for all $\alpha,\beta\in\mathbb{N}^n$, 
  and by an easy induction argument, the latter condition is equivalent to 
  \begin{equation}\label{mult}
    \phi(x^{\alpha+\lambda})=\phi(x^\alpha)\phi(x^\lambda)
  \end{equation}
  for all $\alpha\in\mathbb{N}^n$ and all $\lambda\in\{e_{1},\ldots,e_{n}\}$. 
  Therefore, our goal is to find constraints on the coefficients $a_{\alpha,\beta}$
  which guarantee that $Bx^\Delta$ has a multiplicative structure and the multiplicativity condition \eqref{mult} 
  holds for all $\alpha\in\mathbb{N}^n$ and all $\lambda\in\{e_{1},\ldots,e_{n}\}$.
  We will see in the course of the proof that we do not need the full matrix 
  $(a_{\alpha,\beta})_{\alpha\in\mathbb{N}^n,\beta\in\Delta}$, but rather only the rows indexed by $\alpha\in N\cup N^{(1)}$. 
  
  {\it Step 1.} As above, the condition $\phi\circ(\iota_{\Delta}\otimes{\rm id})={\rm id}$ translates into the
  following constraints on the coefficients $a_{\alpha,\beta}$:
  \begin{equation}\label{cond1}
    \forall\alpha\in N\,,\forall\beta\in\Delta\,:
    a_{\alpha,\beta}=\delta_{\alpha,\beta}\,.
  \end{equation}
  
  {\it Step 2.} We impose the multiplicativity condition \eqref{mult} 
  on all $\alpha$ and $\alpha+\lambda$ which lie in $N\cup N^{(1)}$. 
  Let us translate this into equations for the coefficients $a_{\alpha,\beta}$. 
  The left hand side of \eqref{mult} is
  \begin{equation*}
    \phi(x^{\alpha+\lambda})=\sum_{\beta\in\Delta}a_{\alpha+\lambda,\beta}x^\beta\,.
  \end{equation*}
  The right hand side of \eqref{mult} is a priori not defined before we have the multiplicative structure of $Bx^\Delta$ at hand. 
  However, upon assuming that \eqref{mult} holds for elements of $N\cup N^{(1)}$, 
  we can surmount that obstacle by a trick:
  \begin{equation*}
    \begin{split}
      \phi(x^\alpha)\phi(x^\lambda)&=\sum_{\gamma\in\Delta}a_{\alpha,\gamma}x^\gamma\phi(x^\lambda)
      =\sum_{\gamma\in\Delta}a_{\alpha,\gamma}\phi(x^\gamma)\phi(x^\lambda)\\
      &=\sum_{\gamma\in\Delta}a_{\alpha,\gamma}\phi(x^{\gamma+\lambda})
      =\sum_{\beta,\gamma\in\Delta}a_{\alpha,\gamma}a_{\gamma+\lambda,\beta}x^\beta\,.
    \end{split}
  \end{equation*}
  Here we used the fact that $x^\gamma=\phi(x^\gamma)$ if $\gamma\in\Delta$,
  and the multiplicativity condition \eqref{mult} for $\gamma$ and $\gamma+\lambda$ lying in $N\cup N^{(1)}$. 
  The two expressions have to coincide, hence the following constraints on the coefficients $a_{\alpha,\beta}$:
  \begin{equation}\label{cond2}
    \begin{split}
      &\forall\alpha\in N\cup N^{(1)}\,,\forall\lambda\in\{e_{1},\ldots,e_{n}\}
      \text{ s.t. }\alpha+\lambda\in N\cup N^{(1)}\,,\forall\beta\in\Delta:\\
      &a_{\alpha+\lambda,\beta}=\sum_{\gamma\in\Delta}a_{\alpha,\gamma}a_{\gamma+\lambda,\beta}
    \end{split}
  \end{equation}
  Note that these are just (some of) the structural equations \eqref{structural}. 

  At this point {\it multiplicativity holds within $N\cup N^{(1)}$}
  in the sense that \eqref{mult} holds if $\alpha,\alpha+\lambda\in N\cup N^{(1)}$. 

  {\it Step 3.} We define more values of $\phi$ by means of the equation \eqref{mult}.
  More precisely, we take $\alpha\in N^{(1)}$ and $\lambda\in\{e_{1},\ldots,e_{n}\}$ 
  such that $\alpha+\lambda\in N^{(2)}$ and define
  \begin{equation*}
    \phi(x^{\alpha+\lambda})=\sum_{\beta,\gamma\in\Delta}a_{\alpha,\gamma}a_{\gamma+\lambda,\beta}x^\beta\,.
  \end{equation*}
  Then the multiplicativity condition \eqref{mult} holds for these values of $\alpha$ and $\lambda$, 
  as the right hand side of the last equation is
  \begin{equation*}
    \begin{split}
      &\sum_{\beta,\gamma\in\Delta}a_{\alpha,\gamma}\phi(x^{\gamma+\lambda})
      =\sum_{\beta,\gamma\in\Delta}a_{\alpha,\gamma}\phi(x^\gamma)\phi(x^\lambda)\\
      =& \sum_{\beta,\gamma\in\Delta}a_{\alpha,\gamma}x^\gamma\phi(x^\lambda)
      =\phi(x^\alpha)\phi(x^\lambda)\,.
    \end{split}
  \end{equation*}
  At this point we have to make sure that the definition just given is unambiguous. 
  This means that if $\alpha^\prime\in N^{(1)}$ and $\lambda^\prime\in\{e_{1},\ldots,e_{n}\}$ are such that
  $\alpha+\lambda=\alpha^\prime+\lambda^\prime$, 
  the definitions of $\phi(x^{\alpha+\lambda})$ and of $\phi(x^{\alpha^\prime+\lambda^\prime})$ coincide. 
  This translates into the following constraints on the coefficients $a_{\alpha,\beta}$:
  \begin{equation}\label{cond3}
    \begin{split}
      &\forall\alpha,\alpha^\prime\in N^{(1)}\,,\forall\lambda,\lambda^\prime\in\{e_{1},\ldots,e_{n}\}
      \text{ s.t. }\alpha+\lambda=\alpha^\prime+\lambda^\prime\in N^{(2)}\,,\\
      &\forall\beta\in\Delta:
      \sum_{\gamma\in\Delta}a_{\alpha,\gamma}a_{\gamma+\lambda,\beta}
      =\sum_{\gamma\in\Delta}a_{\alpha^\prime,\gamma}a_{\gamma+\lambda^\prime,\beta}\,.
    \end{split}
  \end{equation}

  At this point {\it multiplicativity also holds when passing from $N^{(1)}$ to $N^{(2)}$}
  in the sense that \eqref{mult} holds if $\alpha\in N^{(1)}$ and $\alpha+\lambda\in N^{(2)}$. 
  
  {\it Step 4.} We claim that {\it multiplicativity holds within $N\cup N^{(1)}\cup N^{(2)}$}
  in the sense that \eqref{mult} holds if $\alpha,\alpha+\lambda\in N\cup N^{(1)}\cup N^{(2)}$. 

  We only have to check that if both $\alpha$ and $\alpha+\lambda$ lie in $N^{(2)}$. 
  In this case there exists some $\nu\in\{e_{1},\ldots,e_{n}\}$ such that $\alpha+\lambda-\nu\in N^{(1)}$. 
  In particular, $\nu\neq\lambda$. This implies that also $\alpha-\nu$ lies in $\mathbb{N}^n$, 
  as that element arises from $\alpha+\lambda$ 
  by subtracting two different standard basis elements, $\lambda$ and $\nu$. 
  Therefore $\alpha-\nu$ in fact lies in $N^{(1)}$. We obtain
  \begin{equation*}
    \phi(x^{\alpha+\lambda})=\phi(x^{\alpha+\lambda-\nu})\phi(x^\nu)
    = \phi(x^{\alpha-\nu})\phi(x^\lambda)\phi(x^\nu)=\phi(x^\alpha)\phi(x^\lambda)\,,
  \end{equation*}
  as desired. Here we used the fact that multiplicativity holds when passing from $N^{(1)}$ to $N^{(2)}$
  for the outer two equalities and the fact that multiplicativity holds within $N\cup N^{(1)}$ 
  for the inner equality. 
  
  {\it Step 5.} We define more values of $\phi$ in analogy to Step 3:
  We take $\alpha\in N^{(1)}$ and $\lambda,\mu\in\{e_{1},\ldots,e_{n}\}$ such that
  $\alpha+\lambda\in N^{(2)}$ and $\alpha+\lambda+\mu\in N^{(3)}$ and define
  \begin{equation*}
    \phi(x^{\alpha+\lambda+\mu})=\sum_{\beta,\gamma,\gamma^\prime\in\Delta}
    a_{\alpha,\gamma^\prime}a_{\gamma^\prime+\lambda,\gamma}a_{\gamma+\mu,\beta}x^\beta\,.
  \end{equation*}
  This definition makes the identity
  \begin{equation}\label{specialmult}
    \phi(x^{\alpha+\lambda+\mu})=\phi(x^{\alpha+\lambda})\phi(x^\mu)
  \end{equation}
  hold. We claim that the definition just given is unambiguous, 
  i.e. that if $\alpha^\prime\in N^{(1)}$ and $\lambda^\prime,\mu^\prime\in\{e_{1},\ldots,e_{n}\}$ are such that
  $\alpha+\lambda+\mu=\alpha^\prime+\lambda^\prime+\mu^\prime$, 
  the corresponding definitions of $\phi(x^{\alpha+\lambda})$ coincide. 
  For verifying this, we first note that we may assume that $\mu\neq\mu^\prime$, 
  since otherwise unambiguity is trivial. 
  By the same argument as in Step 4, 
  we see that $\alpha+\lambda-\mu^\prime=\alpha^\prime+\lambda^\prime-\mu$ lies in $\mathbb{N}^n$. 
  Therefore $\alpha+\lambda-\mu^\prime$ in fact lies in $N\cup N^{(1)}\cup N^{(2)}$. 
  Now we distinguish three cases. 
  
  {\it Case a.} $\lambda\neq\mu^\prime$. 
  Then by the same argument once more, $\alpha-\mu^\prime$ lies in $\mathbb{N}^n$. 
  It follows that $\alpha-\mu^\prime$ lies in $N\cup N^{(1)}\cup N^{(2)}$. 
  Therefore
  \begin{equation*}
    \begin{split}
    &\phi(x^{\alpha+\lambda+\mu})
    =\phi(x^{\alpha+\lambda})\phi(x^\mu)
    =\phi(x^\alpha)\phi(x^\lambda)\phi(x^\mu)\\
    &=\phi(x^{\alpha-\mu^\prime})\phi(x^{\mu^\prime})\phi(x^\lambda)\phi(x^\mu)
    =\phi(x^{\alpha-\mu^\prime+\lambda})\phi(x^{\mu^\prime})\phi(x^\mu)\\
    &=\phi(x^{\alpha^\prime-\mu+\lambda^\prime})\phi(x^{\mu^\prime})\phi(x^\mu)
    =\phi(x^{\alpha^\prime+\lambda^\prime})\phi(x^{\mu^\prime})
    =\phi(x^{\alpha^\prime+\lambda^\prime+\mu^\prime})\,.
    \end{split}
  \end{equation*}
  Here we used \eqref{specialmult} for the outer equalities and 
  the fact that multiplicativity holds within $N\cup N^{(1)}\cup N^{(2)}$ for all other equalities. 
  
  {\it Case b.} $\lambda^\prime\neq\mu$. 
  This is the same as the previous with the roles of primed and non-primed elements interchanged.
  
  {\it Case c.} $\lambda=\mu^\prime$ and $\lambda^\prime=\mu$. Then $\alpha=\alpha^\prime$. 
  As $\alpha\in N^{(1)}$, there exists a $\nu\in\{e_{1},\ldots,e_{n}\}$ such that $\alpha-\nu\in N$. 
  Again we get a chain of equalities: 
  \begin{equation*}
    \begin{split}
    &\phi(x^{\alpha+\lambda+\mu})
    =\phi(x^{\alpha+\lambda})\phi(x^{\lambda^\prime})
    =\phi(x^\alpha)\phi(x^\lambda)\phi(x^{\lambda^\prime})\\
    &=\phi(x^{\alpha-\nu})\phi(x^\nu)\phi(x^\lambda)\phi(x^{\lambda^\prime})
    =\phi(x^{\alpha+\lambda^\prime-\nu})\phi(x^\nu)\phi(x^\lambda)\\
    &=\phi(x^{\alpha+\lambda^\prime})\phi(x^\lambda)
    =\phi(x^{\alpha+\lambda^\prime+\mu^\prime})\,.
    \end{split}
  \end{equation*}
  Again we used \eqref{specialmult} for the outer equalities and 
  the fact that multiplicativity holds within $N\cup N^{(1)}\cup N^{(2)}$ for all other equalities.

  At this point {\it multiplicativity also holds when passing from $N^{(2)}$ to $N^{(3)}$} in the obvious sense. 

  {\it Induction Step.} Note that the proof of multiplicativity given in Step 4 was completely formal,
  only using the fact that multiplicativity holds within $N\cup N^{(1)}$ and when passing from $N^{(1)}$ to $N^{(2)}$. 
  Therefore, now that we know that multiplicativity holds within $N\cup N^{(1)}\cup N^{(2)}$
  and when passing from $N^{(2)}$ to $N^{(3)}$, 
  we can imitate Step 4 and prove that multiplicativity holds within $N\cup N^{(1)}\cup N^{(2)}\cup N^{(3)}$. 
  Analogously, Step 5 was completely formal and can be imitated for proving that 
  multiplicativity holds when passing from $N^{(3)}$ to $N^{(4)}$. 
  Then we imitate Step 4 again and prove that multiplicativity holds within 
  $N\cup N^{(1)}\cup N^{(2)}\cup N^{(3)}\cup N^{(4)}$, 
  imitate Step 5 again for proving that multiplicativity holds when passing from $N^{(4)}$ to $N^{(5)}$, and so on.
  This proves that the multiplicativity condition \eqref{mult} holds for all 
  $\alpha\in\mathbb{N}^n$ and all $\lambda\in\{e_{1},\ldots,e_{n}\}$. 
  
  {\it End of proof.} We see that the $B$-module homomorphism $\phi:B[x]\to Bx^\Delta$ 
  defines a $B$-algebra homomorphism if, and only if, the coefficients $a_{\alpha,\beta}$, 
  where $\alpha\in N\cup N^{(1)}$ and $\beta\in\Delta$, 
  satisfy the three conditions \eqref{cond1}, \eqref{cond2} and \eqref{cond3} of Steps 1, 2 and 3, resp.
  Therefore, an element $\phi$ of $\HHi^\Delta_{S/k}(B)$ is uniquely determined by the choice of elements $a_{\alpha,\beta}\in B$,
  for all $\alpha\in N\cup N^{(1)}$ and all $\beta\in\Delta$, 
  such that \eqref{cond1}, \eqref{cond2} and \eqref{cond3} hold. 
  That choice corresponds to the choice of a $k$-algebra homomorphism $R^\Delta\to B$. 
\end{proof}

The set $N$ of Proposition \ref{representing} can be chosen finite. 
Therefore the scheme $\Hi^{\Delta}_{S/k}$ is of finite type over $k$, 
and embedded as the closed subscheme corresponding to $I^\Delta$ into affine space with coordinates $T_{\alpha,\beta}$, 
for $\alpha\in N$, $\beta\in\Delta$. 
In view of the summand $I^\Delta_{1}$ of $I^\Delta$, we see that we only need the coordinates $T_{\alpha,\beta}$, 
for $\alpha\in N-\Delta$, $\beta\in\Delta$, for the ambient space. 
The smallest possible $N$ is $\Delta$, 
hence a closed immersion of $\Hi^{\Delta}_{S/k}$ into affine space of dimension $\#\mathscr{B}(\Delta)\#\Delta$. 
The same immersion is studied in
\cite{huibregtse}, \cite{kk1}, \cite{kk2}, \cite{kkr}, \cite{krarticle} and \cite{robbiano}. 
However, these articles do not use the matrices of $\phi(x^\alpha)$ but rather the polynomials 
\begin{equation*}
  f_{\alpha}=x^\alpha+\sum_{\beta\in\Delta}d_{\alpha,\beta}x^\beta\in\ker\phi
\end{equation*}
(cf. \eqref{eltsofkernel}). 
These polynomials carry the same information as our matrix, as
\begin{equation}\label{ftomatrix}
  a_{\alpha,\beta}=
    \begin{cases} 
      \,\,\,\delta_{\alpha,\beta}&\text{ if }\alpha\in\Delta\,,\\
      -d_{\alpha,\beta}&\text{ if }\alpha\in\mathbb{N}^n-\Delta\,,
    \end{cases}
\end{equation}
The work with the polynomials $f_{\alpha}$ makes syzygy criteria necessary in the cited articles.
The two summands $I^\Delta_{2}$ and $I^\Delta_{3}$ in our ideal $I^\Delta$ correspond to the concepts of 
{\it next-door-neighbors} and {\it across-the-street neighbors}, resp., in \cite{kk1}, Definition 17 and \cite{krarticle}, Section 4. 
We will say more about that in the next section, in which we dispose of many of the generators of $I^\Delta_{3}$.
Now we focus on the Gr\"obner functors.

\begin{cor}\label{prime}
  Let $N$, $R$ and $I^\Delta$ be as in Proposition \ref{representing}. 
  Then the functor $\HHi^{\prec\Delta}_{S/k}$ is represented by the affine scheme
  $\Hi^{\prec\Delta}_{S/k}={\rm Spec}\,R^{\prec\Delta}$, where 
  \begin{equation*}
    R^{\prec\Delta}=R/I^{\prec\Delta}
  \end{equation*}
  and 
  \begin{equation*}
    I^{\prec\Delta}=I^\Delta+(T_{\alpha,\beta};\alpha\in N\cup N^{(1)},\beta\in\Delta,\alpha\prec\beta)\,.
  \end{equation*}
\end{cor}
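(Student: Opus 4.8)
The plan is to reduce the statement to Proposition \ref{representing} together with the explicit description of $\HHi^{\prime\,\delta}_{S/k}$ obtained in the proof of Theorem \ref{locallyclosed1}. Recall that $\Hi^{\delta}_{S/k} = {\rm Spec}\,R^\delta$ represents $\HHi^{\delta}_{S/k}$, and that an element $\phi\in\HHi^{\delta}_{S/k}(B)$ corresponds to a $k$-algebra homomorphism $R^\delta\to B$, sending $T_{\alpha,\beta}$ to the coefficient $a_{\alpha,\beta}$ in $\phi(x^\alpha)=\sum_{\beta\in\delta}a_{\alpha,\beta}x^\beta$. By \eqref{ftomatrix}, for $\alpha\in\mathbb{N}^n-\delta$ we have $a_{\alpha,\beta}=-d_{\alpha,\beta}$, where $f_\alpha = x^\alpha+\sum_{\beta\in\delta}d_{\alpha,\beta}x^\beta$ is the unique element of $\ker\phi$ with leading exponent $\alpha$ and non-leading exponents in $\delta$ (Lemma \ref{favouritelemma}(i)). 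Thus $a_{\alpha,\beta}=0$ for $\alpha\in\mathbb{N}^n-\delta$, $\beta\in\delta$, $\alpha\prec\beta$ if and only if $d_{\alpha,\beta}=0$ for those indices.

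First I would invoke the characterization of $\HHi^{\prime\,\delta}_{S/k}(B)$ inside $\HHi^{\delta}_{S/k}(B)$ from the proof of Theorem \ref{locallyclosed1}: $\phi$ lies in $\HHi^{\prime\,\delta}_{S/k}(B)$ precisely when $d_{\alpha,\beta}=0$ for all $\alpha\in\mathscr{C}(\delta)$ and all $\beta\in\delta$ with $\alpha\prec\beta$ — equivalently, by Lemma \ref{favouritelemma}, when the analogous vanishing holds for all $\alpha\in\mathbb{N}^n-\delta$. Translating through \eqref{ftomatrix}, this says the $k$-algebra homomorphism $R^\delta\to B$ classifying $\phi$ kills the images of all $T_{\alpha,\beta}$ with $\alpha\in\mathbb{N}^n-\delta$, $\beta\in\delta$, $\alpha\prec\beta$. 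Since $R^\delta$ only uses the variables $T_{\alpha,\beta}$ for $\alpha\in N\cup N^{(1)}$, the relevant condition on the classifying morphism is precisely that it kills $T_{\alpha,\beta}$ for $\alpha\in N\cup N^{(1)}$, $\beta\in\delta$, $\alpha\prec\beta$; here one must check that these finitely many vanishings already force the vanishing of the remaining $d_{\alpha,\beta}$, which follows from part (ii) of Lemma \ref{favouritelemma}, since every coefficient $d_{\alpha,\beta}$ is a $\mathbb{Z}$-polynomial in the coefficients indexed by the corners (and the order condition $\beta\prec\alpha$ propagates through the inductive construction). Hence a morphism $R^\delta\to B$ factors through $R^{\prime\,\delta}=R^\delta/(\text{images of }T_{\alpha,\beta};\alpha\in N\cup N^{(1)},\beta\in\delta,\alpha\prec\beta)$ if and only if the classified $\phi$ lies in $\HHi^{\prime\,\delta}_{S/k}(B)$.

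Therefore ${\rm Hom}_{k\text{-Alg}}(R^{\prime\,\delta},B)$ is in natural bijection with $\HHi^{\prime\,\delta}_{S/k}(B)$, and this bijection is functorial in $B$ because it is obtained by restricting the functorial bijection for $R^\delta$; so $\Hi^{\prime\,\delta}_{S/k}={\rm Spec}\,R^{\prime\,\delta}$ represents $\HHi^{\prime\,\delta}_{S/k}$. Finally, since $I^{\prime\,\delta}=I^\delta+(T_{\alpha,\beta};\alpha\in N\cup N^{(1)},\beta\in\delta,\alpha\prec\beta)$ as ideals in $R$, we have $R/I^{\prime\,\delta}=R^\delta/(\text{images of those }T_{\alpha,\beta})=R^{\prime\,\delta}$, which gives the stated presentation. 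I expect the only genuine subtlety to be the bookkeeping in the middle step — verifying that imposing the vanishing of $T_{\alpha,\beta}$ only for $\alpha\in N\cup N^{(1)}$ (rather than for all $\alpha\in\mathbb{N}^n-\delta$) suffices, which is exactly where Lemma \ref{favouritelemma}(ii) and the compatibility of $\prec$ with the inductive reduction are needed; everything else is a formal transport of the representability statement of Proposition \ref{representing} along the closed condition of Theorem \ref{locallyclosed1}.
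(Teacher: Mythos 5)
Your proposal is correct and follows the same route as the paper, which disposes of this corollary in a single sentence by translating the constraints from the proof of Theorem~\ref{locallyclosed1} into the variables $T_{\alpha,\beta}$ via \eqref{ftomatrix}; you flesh out exactly that translation. One small imprecision worth noting: the step where you argue that imposing $T_{\alpha,\beta}=0$ only for $\alpha\in N\cup N^{(1)}$ with $\alpha\prec\beta$ already forces the vanishing of the remaining $d_{\alpha,\beta}$ is more naturally a consequence of Lemma~\ref{favouritelemma}(i) together with the freeness of $B[x]/\ker\phi$ on $x^\delta$, rather than of part~(ii). Once the corner polynomials $f_\alpha$ (for $\alpha\in\mathscr{C}(\delta)\subset N\cup N^{(1)}$) are monic with ${\rm LE}(f_\alpha)=\alpha$ and support in $\delta\cup\{\alpha\}$, part~(i) produces, for every $\alpha\in\mathbb{N}^n-\delta$, a polynomial in the ideal $(f_\alpha;\alpha\in\mathscr{C}(\delta))$ with leading exponent $\alpha$ and non-leading exponents in $\delta$; since the only element of $\ker\phi$ supported in $\delta$ is $0$, this polynomial must coincide with your $f_\alpha$, whence $d_{\alpha,\beta}=0$ whenever $\alpha\prec\beta$. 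Your informal appeal to (ii) with ``the order condition propagates through the inductive construction'' is really just a paraphrase of (i), so no gap results, but the attribution is worth tightening.
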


\begin{proof}
  The additional conditions defining $I^{\prec\Delta}$ express 
  the constraints on the subfunctor $\HHi^{\prec\Delta}_{S/k}$ of $\HHi^{\Delta}_{S/k}$
  which we discussed in the proof of Theorem \ref{locallyclosed1}, 
  in terms of the variables $T_{\alpha,\beta}$. 
\end{proof}

Equivalently, the scheme $\Hi^{\prec\Delta}_{S/k}$ is the closed subscheme of $\Hi^{\Delta}_{S/k}$
defined by the ideal in $R^\Delta$ generated by the images of all $T_{\alpha,\beta}$, 
for $\alpha\in N\cup N^{(1)}$ and $\beta\in\Delta$ such that $\alpha\prec\beta$.
Note that for the ideal defining $\Hi^{\prec\Delta}_{S/k}$, the identity 
\begin{equation*}
  I^{\prec\Delta}=I^\Delta+(T_{\alpha,\beta};\alpha\in\mathscr{C}(\Delta),\beta\in\Delta,\alpha\prec\beta)
\end{equation*}
holds. In other words, many of the additional conditions defining $I^{\prec\Delta}$ follow from a few basic ones. 
(This is a consequence of Lemma \ref{favouritelemma}, applied to the ring $R^\Delta$ of Proposition \ref{representing}.)

By Corollary \ref{prime}, $\Hi^{\prec\Delta}_{S/k}$ is a closed subscheme 
of an affine space of dimension $\#\mathscr{B}(\Delta)\#\Delta$. 
We now cut down further the dimension of the ambient space. 
For this we consider the polynomial ring 
$R_{m}=k[T_{\alpha,\beta};\alpha\in\mathscr{C}(\Delta),\beta\in\Delta,\alpha\succ\beta]$. 
For all $\alpha\in\mathscr{B}(\Delta)-\mathscr{C}(\Delta)$ and all $\beta\in\Delta$ such that $\alpha\succ\beta$, 
we define elements $T_{\alpha,\beta}$ of $R_{m}$ by recursion over $\alpha$ as follows: 
\begin{itemize}
  \item We start with $\Gamma=\mathscr{B}(\Delta)-\mathscr{C}(\Delta)$. 
  \item While $\Gamma\neq\emptyset$, we take the minimal element $\alpha$ of $\Gamma$, 
    we find some $\nu\in\{e_{1},\ldots,e_{n}\}$ such that $\alpha-\nu\in\mathscr{B}(\Delta)$, we define 
    \begin{equation}\label{restrictedsum}
      T_{\alpha,\beta}=\sum_{\gamma\in\Delta,\alpha\succ\gamma+\nu\succ\beta}
      T_{\alpha-\nu,\gamma}T_{\gamma+\nu,\beta}\,,
    \end{equation}
    where $T_{\gamma+\nu,\beta}=\delta_{\gamma+\nu,\beta}$ if $\gamma+\nu\in\Delta$, 
    and we replace $\Gamma$ by $\Gamma-\{\alpha\}$. 
\end{itemize}
Moreover, we define an ideal $I^{\prec\Delta}_{m}\subset R_{m}$ 
by the same formulas as the ideal $I^\Delta\subset R$ in Proposition \ref{representing}, 
applied in the case where $N=\Delta$, with the following modification:
In all summands of all generators of $I^\Delta$,
we replace all $T_{\alpha,\beta}$ such that $\alpha\in\Delta$ by $\delta_{\alpha,\beta}$, 
and delete all $T_{\alpha,\beta}$ such that $\alpha\succ\beta$. 

\begin{cor}\label{min}
  With the above notation, 
  we have $\Hi^{\prec\Delta}_{S/k}={\rm Spec}\,R^{\prec\Delta}_{m}$, where 
  \begin{equation*}
    R^{\prec\Delta}_{m}=R_{m}/I^{\prec\Delta}_{m}\,.
  \end{equation*}
\end{cor}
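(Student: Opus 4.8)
The plan is to show that the data determining an element $\phi\in\HHi^{\prime\,\delta}_{S/k}(B)$ can be encoded by exactly the coordinates $T_{\alpha,\beta}$ with $\alpha\in\mathscr{C}(\delta)$, $\beta\in\delta$, $\alpha\succ\beta$, and that the structural equations reduce to the ideal $I^{\prime\,\delta}_{m}$. I start from Corollary \ref{prime}, which presents $\Hi^{\prime\,\delta}_{S/k}$ in the ring $R=k[T_{\alpha,\beta};\alpha\in N\cup N^{(1)},\beta\in\delta]$ with the ideal $I^{\prime\,\delta}=I^\delta+(T_{\alpha,\beta};\alpha\prec\beta)$, already noting the identity $I^{\prime\,\delta}=I^\delta+(T_{\alpha,\beta};\alpha\in\mathscr{C}(\delta),\beta\in\delta,\alpha\prec\beta)$ recorded just before the statement. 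Taking $N=\delta$, the summand $I^\delta_1$ lets me eliminate every $T_{\alpha,\beta}$ with $\alpha\in\delta$ (replacing it by $\delta_{\alpha,\beta}$), and the above identity together with the $\alpha\prec\beta$ relations lets me eliminate every $T_{\alpha,\beta}$ with $\alpha\prec\beta$. So the remaining free coordinates are the $T_{\alpha,\beta}$ with $\alpha\in\mathscr{B}(\delta)$, $\beta\in\delta$, $\alpha\succ\beta$ — but only those with $\alpha\in\mathscr{C}(\delta)$ are genuinely free; the ones with $\alpha\in\mathscr{B}(\delta)-\mathscr{C}(\delta)$ must still be solved for.

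The key step is to justify the recursive definition \eqref{restrictedsum} of $T_{\alpha,\beta}$ for $\alpha\in\mathscr{B}(\delta)-\mathscr{C}(\delta)$. The point is that for such $\alpha$ there is some $\nu\in\{e_1,\ldots,e_n\}$ with $\alpha-\nu\in\mathscr{B}(\delta)$ (this is exactly what it means for $\alpha$ not to be a corner), so the structural equation \eqref{structural}, specialized to this $\alpha-\nu$ and $\nu$, expresses $T_{\alpha,\beta}=\sum_{\gamma\in\delta}T_{\alpha-\nu,\gamma}T_{\gamma+\nu,\beta}$; after discarding the vanishing terms coming from the relations $T_{\mu,\beta}=0$ for $\mu\prec\beta$ and substituting $T_{\gamma+\nu,\beta}=\delta_{\gamma+\nu,\beta}$ when $\gamma+\nu\in\delta$, one obtains precisely \eqref{restrictedsum}. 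I must check that this recursion is well defined, i.e. at each stage $\alpha-\nu$ and each $\gamma+\nu$ appearing either lies in $\delta$ or is a strictly smaller element of $\mathscr{B}(\delta)-\mathscr{C}(\delta)$ for which $T$ has already been defined — here I use that $\prec$ refines the componentwise order, so $\alpha-\nu\prec\alpha$ and $\gamma+\nu\prec\alpha$ (the latter by the summation constraint $\alpha\succ\gamma+\nu$), and that $\mathscr{B}(\delta)$ is finite. This construction mirrors the "next-door-neighbor" elimination in \cite{kk1}, \cite{krarticle}, but performed over the Gröbner stratum where the $\alpha\prec\beta$ coefficients already vanish, which is what makes the truncated sum legitimate.

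It then remains to identify the remaining relations. Substituting the $I^\delta_1$-relations, the $\alpha\prec\beta$-relations, and the just-constructed expressions \eqref{restrictedsum} into the generators of $I^\delta_2$ and $I^\delta_3$ (for $N=\delta$), each generator becomes a polynomial in the variables of $R_m$; by construction these are exactly the generators of $I^{\prime\,\delta}_{m}$ as defined in the paragraph preceding the statement ("replace all $T_{\alpha,\beta}$ with $\alpha\in\delta$ by $\delta_{\alpha,\beta}$, delete all $T_{\alpha,\beta}$ with $\alpha\succ\beta$" — wait, that should read "with $\alpha\prec\beta$"; I follow the construction as intended). The upshot is a ring isomorphism $R^{\prime\,\delta}\cong R_m/I^{\prime\,\delta}_{m}=R^{\prime\,\delta}_{m}$ compatible with the functor it represents, whence $\Hi^{\prime\,\delta}_{S/k}={\rm Spec}\,R^{\prime\,\delta}_{m}$.

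The main obstacle is the well-definedness and termination of the recursion \eqref{restrictedsum}: one has to verify that the choices of $\nu$ do not matter modulo $I^{\prime\,\delta}_{m}$ (or, more cheaply, fix one choice of $\nu$ per $\alpha$ and absorb the compatibility of different choices into the relations $I^\delta_3$), and that every monomial $T_{\gamma+\nu,\beta}$ occurring on the right-hand side has already been eliminated or defined. This is precisely the place where the structure of $\mathscr{B}(\delta)$ relative to $\mathscr{C}(\delta)$, and the compatibility of $\prec$ with addition of standard basis vectors, is used in an essential way; everything else is bookkeeping of substitutions into the equations of Proposition \ref{representing}.
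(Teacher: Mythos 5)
Your proof is correct and follows essentially the same route as the paper: eliminate the $T_{\alpha,\beta}$ with $\alpha\in\delta$ via $I^\delta_1$ and those with $\alpha\prec\beta$ via the Gr\"obner condition, then use the $I^\delta_2$-relations to express recursively every $T_{\alpha,\beta}$ with $\alpha\in\mathscr{B}(\delta)-\mathscr{C}(\delta)$ in terms of the corner variables, checking termination of the recursion via $\alpha\succ\alpha-\nu$ and $\alpha\succ\gamma+\nu$, exactly as the paper does. You have also correctly spotted a misprint in the paper's definition of $I^{\prime\,\delta}_m$: the clause ``delete all $T_{\alpha,\beta}$ such that $\alpha\succ\beta$'' should read ``$\alpha\prec\beta$'', since that is what the equality $T_{\alpha,\beta}=0$ in $R^{\prime\,\delta}$ actually gives.
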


\begin{proof}
  We first check that our recursion is well-defined. 
  Indeed, for all $\alpha\in \mathscr{B}(\Delta)-\mathscr{C}(\Delta) $, 
  the existence of a standard basis element $\nu$ such that $\alpha-\nu\in\mathscr{B}(\Delta)$ 
  follows directly from the definitions. 
  If $\alpha$ is the object of consideration in one particular step of the recursion, 
  the element $\alpha-\nu$ either lies in $\mathscr{C}(\Delta)$ 
  or has been the object of consideration in an earlier stage of the recursion, as $\alpha\succ\alpha-\nu$. 
  In both cases $T_{\alpha-\nu,\gamma}$ is a well-defined element of $R_m$. Moreover, 
  in the sum \eqref{restrictedsum} we only consider those $\gamma\in\Delta$ for which $\alpha-\nu\succ\gamma$, 
  or equivalently, $\alpha\succ\gamma+\nu$. 
  Therefore the element $\gamma+\nu$ either lies in $\Delta$ 
  or has been the object of consideration in an earlier stage of the recursion. 
  In both cases $T_{\gamma+\nu,\beta}$ is a well-defined element of $R_m$. 
  
  Now we return to the notation of Corollary \ref{prime} in the case where $N=\Delta$ 
  and consider the rings $R$ and $R^{\prec\Delta}=R/I^{\prec\Delta}$ defined there. 
  For simplicity we write $T_{\alpha,\beta}$ for the images of the variables $T_{\alpha,\beta}\in R$ 
  in the quotient $R^{\prec\Delta}$. 
  In particular, $T_{\alpha,\beta}=0$ for all $\alpha\in\mathscr{B}(\Delta)$ and all $\beta\in\Delta$ 
  such that $\alpha\prec\beta$. Furthermore, 
  the presence of the summand $I^\Delta_{2}$ in the ideal $I^\Delta$ implies that whenever 
  $\alpha-\nu$ and $\alpha$ lie in $\mathscr{B}(\Delta)$, the identity
  \begin{equation*}
    T_{\alpha,\beta}=\sum_{\gamma\in\Delta}T_{\alpha-\nu,\gamma}T_{\gamma+\nu,\beta}
  \end{equation*}
  holds in $R^{\prec\Delta}$. 
  However, only those $\gamma\in\Delta$ for which $\alpha-\nu\succ\gamma$ and $\gamma+\nu\succ\beta$ 
  make a contribution to that sum. 
  This explains the definition of $T_{\alpha,\beta}$ given in \eqref{restrictedsum}. 
  As for the definition of $I^{\prec\Delta}_{m}$, 
  the replacement $T_{\alpha,\beta}=\delta_{\alpha,\beta}$ for all $\alpha\in\Delta$ 
  is clear from the presence of the summand $I^\Delta_{1}$ in the ideal $I^\Delta$; 
  and deleting all $T_{\alpha,\beta}$ such that $\alpha\succ\beta$ 
  stems from the equality $T_{\alpha,\beta}=0$ in $R^{\prec\Delta}$. 
  The assertion follows from Corollary \ref{prime}. 
\end{proof}

In the corollary we embedded $\Hi^{\prec\Delta}_{S/k}$ into an affine space of dimension 
\begin{equation*}
  p=\#\{(\alpha,\beta)\in\mathscr{C}(\Delta)\times\Delta;\alpha\succ\beta\}\,.
\end{equation*} 
Note that $p$ depends on both the shape of $\Delta$ and the term order $\prec$. 
Example \ref{axis} below shows that for special shapes of $\Delta$, 
there exists a term order $\prec$ such that this upper bound is sharp. 
(For the standard set of Example \ref{axis}, 
take $\prec$ to be the lexicographic order such that $x_1\prec x_i$, for all $i>1$.)
This observation motivates the subscript in the ideal $I^{\prec\Delta}_{m}$, which stands for {\it minimal}. 
However, it is not clear if minimality holds in a strict sense: 

\begin{q}
  Given a standard set $\Delta$, does there exist a term order $\prec$ such that 
  $p$ is the minimal dimension of an affine space into which $\Hi^{\prec\Delta}_{S/k}$ can be embedded? 
\end{q}

\begin{q}
  Given a term order $\prec$, does there exist a standard set $\Delta$ such that 
  $p$ is the minimal dimension of an affine space into which $\Hi^{\prec\Delta}_{S/k}$ can be embedded? 
\end{q}

In Proposition \ref{representing} we embedded $\Hi^{\Delta}_{S/k}$ into an affine space of dimension 
\begin{equation*}
  q=d\#\mathscr{B}(\Delta)\,.
\end{equation*} 
(This dimension is obtained when letting $N=\Delta$.)
For standard sets of size $d=1$, we trivially have $\Hi^{\Delta}_{S/k}=\mathbb{A}^n$, 
thus the dimension of $\Hi^{\Delta}_{S/k}$ equals $q$. 
It is not clear what happens for larger $d$:

\begin{q}
  For which $d\in\mathbb{N}$ does there exist a standard set $\Delta$ such that 
  $q$ is the minimal dimension of an affine space into which all $\Hi^{\Delta}_{S/k}$ can be embedded?
\end{q}

For special shapes of $\Delta$, the number $q$ is certainly not the minimal dimension which one can reach. 
A class of counterexamples is given by Example \ref{axis} again
(for the term order $\prec$ we considered above, we have $\Hi^{\Delta}_{S/k}=\Hi^{\prec\Delta}_{S/k}$).
Another class of counterexamples is given by Corollary 7.3.2 of \cite{huibregtsea2}, 
which states that if $n=2$ and $\Delta\subset\mathbb{N}^2$ has a ``sawtooth'' form depicted in Figure \ref{sawtooth}
(for any parameters $a$, $b$ and $c$), then $\Hi^{\Delta}_{S/k}$ is an affine space of dimension $2d$. 

\begin{center}
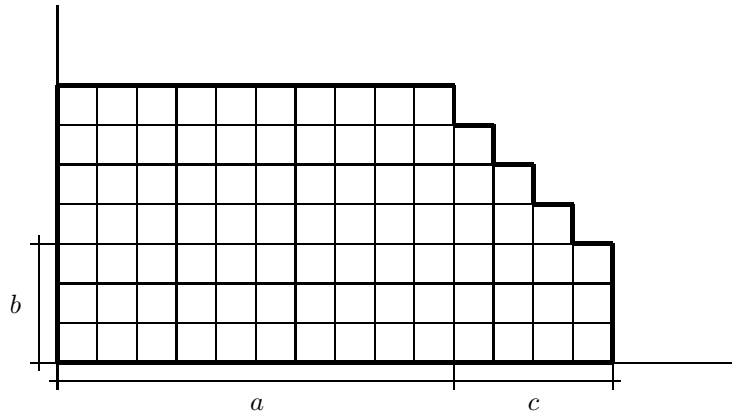
\begin{figure}[ht]
  \begin{picture}(270,175)
    \put(240,30){\line(1,0){45}}
    \put(30,135){\line(0,1){30}}
    \multiput(30,45)(0,15){2}{\line(1,0){210}}
    \put(30,75){\line(1,0){195}}
    \put(30,90){\line(1,0){180}}
    \put(30,105){\line(1,0){165}}
    \put(30,120){\line(1,0){150}}
    \multiput(45,30)(15,0){9}{\line(0,1){105}}
    \put(180,30){\line(0,1){90}}
    \put(195,30){\line(0,1){75}}
    \put(210,30){\line(0,1){60}}
    \put(225,30){\line(0,1){45}}
    \put(30,30){\line(-1,0){10}}
    \put(30,75){\line(-1,0){10}}
    \put(23,27){\line(0,1){51}}
    \put(30,30){\line(0,-1){10}}    
    \put(180,30){\line(0,-1){10}}    
    \put(240,30){\line(0,-1){10}}    
    \put(27,23){\line(1,0){216}}
    \put(103,12){\small $a$}
    \put(12,49){\small $b$}
    \put(208,12){\small $c$}
    \linethickness{.5mm}
    \put(30,30){\line(1,0){210}}
    \put(30,30){\line(0,1){105}}
    \put(30,135){\line(1,0){150}}
    \put(240,30){\line(0,1){45}}
    \multiput(180,120)(15,-15){4}{\line(1,0){15}}
    \multiput(180,120)(15,-15){4}{\line(0,1){15}}
  \end{picture}
\caption{A standard set of sawtooth form}
\label{sawtooth}
\end{figure}
\end{center}

If $N$ is strictly larger than $\Delta$, 
the dimension of the ambient space of $\Hi^{\Delta}_{S/k}$ 
given in Proposition \ref{representing} is far from minimal, thus seeming unnecessarily large. 
Alas also that presentation is useful, 
as it leads to a compact formula for the coordinate change between two charts 
$\Hi^{\Delta}_{S/k}$ and $\Hi^{\Pi}_{S/k}$ of $\Hi^{d}_{S/k}$.
We will carry this out in Section \ref{changingcharts} below. 


\section{A smaller set of generators}\label{examples}

For illustrating the presentation of $\Hi^{\Delta}_{S/k}$ 
given in the last section, we go through a few examples. 
These will also serve as a motivation for Theorem \ref{fewer} below,
which is a substantial improvement of Proposition \ref{representing}. 
In all examples we only study $\Hi^{\Delta}_{S/k}$, and not $\Hi^{\prec\Delta}_{S/k}$, 
as the latter arises from the former by simply setting $T_{\alpha,\beta}=0$ for all $\alpha\prec\beta$. 

\begin{center}
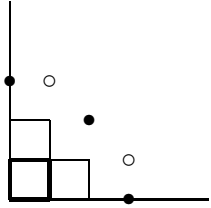
\begin{figure}[ht]
  \begin{picture}(135,115)
    \put(45,20){\line(1,0){60}}
    \put(30,35){\line(0,1){60}}
    \linethickness{.5mm}
    \put(30,20){\line(1,0){15}}
    \put(30,35){\line(1,0){15}}
    \put(30,20){\line(0,1){15}}
    \put(45,20){\line(0,1){15}}
    \thinlines
    \put(45,35){\line(1,0){15}}
    \put(30,50){\line(1,0){15}}
    \put(45,35){\line(0,1){15}}
    \put(60,20){\line(0,1){15}}
    \put(30,65){\circle*{4}}
    \put(45,65){\circle{4}}
    \put(60,50){\circle*{4}}
    \put(75,35){\circle{4}}
    \put(75,20){\circle*{4}}
  \end{picture}
\caption{A standard set $\Delta$ together with $\Delta^{(1)}$ and $\Delta^{(2)}$.}
\label{figex1}
\end{figure}
\end{center}

\begin{ex}
  Consider the following standard set $\Delta$ and its borders:
  \begin{equation*}
    \begin{split}
      \Delta&=\{(0,0),(1,0),(0,1),(1,1)\}\,,\\
      \Delta^{(1)}&=\{(2,0),(0,2),(2,1),(1,2)\}\,,\\
      \Delta^{(2)}&=\{(3,0),(3,1),(2,2),(1,3),(0,4)\}\,.
    \end{split}
  \end{equation*}
  Figure \ref{figex1} shows $\Delta$, drawn in thick lines; $\Delta^{(1)}$, 
  drawn in thin lines; and $\Delta^{(2)}$, marked by circles. 
  
  In view of the presence of $I^\Delta_{1}$ in the ideal $I^\Delta$ of Proposition \ref{representing}, 
  we replace the polynomial ring $k[T_{\alpha,\beta};\alpha\in\Delta\cup\Delta^{(1)},\beta\in\Delta]$
  of that theorem by $R=k[T_{\alpha,\beta};\alpha\in\Delta^{(1)},\beta\in\Delta]$. 
  Then 
  \begin{equation*}
    \Hi^{\Delta}_{S/k}={\rm Spec}\,R/I^\Delta\,, 
  \end{equation*}
  where $I^\Delta$ is the sum of the three ideals
  \begin{equation*}
    \begin{split}
      I^\Delta_{2,1}&=(T_{(1,2),(0,0)}-T_{(0,2),(1,0)}T_{(2,0),(0,0)}-T_{(0,2),(1,1)}T_{(2,1),(0,0)},\\
      &T_{(1,2),(1,0)}-T_{(0,2),(1,0)}T_{(2,0),(1,0)}-T_{(0,2),(1,1)}T_{(2,1),(1,0)}-T_{(0,2),(0,0)},\\
      &T_{(1,2),(0,1)}-T_{(0,2),(1,0)}T_{(2,0),(0,1)}-T_{(0,2),(1,1)}T_{(2,1),(0,1)},\\
      &T_{(1,2),(1,1)}-T_{(0,2),(1,0)}T_{(2,0),(1,1)}-T_{(0,2),(1,1)}T_{(2,1),(1,1)}-T_{(0,2),(0,1)})\,,
    \end{split}
  \end{equation*}
  \begin{equation*}
    \begin{split}
      I^\Delta_{2,2}&=(T_{(2,1),(0,0)}-T_{(2,0),(0,1)}T_{(0,2),(0,0)}-T_{(2,0),(1,1)}T_{(1,2),(0,0)},\\
      &T_{(2,1),(1,0)}-T_{(2,0),(0,1)}T_{(0,2),(1,0)}-T_{(2,0),(1,1)}T_{(1,2),(1,0)},\\
      &T_{(2,1),(0,1)}-T_{(2,0),(0,1)}T_{(0,2),(0,1)}-T_{(2,0),(1,1)}T_{(1,2),(0,1)}-T_{(2,0),(0,0)},\\
      &T_{(2,1),(1,1)}-T_{(2,0),(0,1)}T_{(0,2),(1,1)}-T_{(2,0),(1,1)}T_{(1,2),(1,1)}-T_{(2,0),(1,0)})
    \end{split}
  \end{equation*}
  and
  \begin{equation*}
    \begin{split}
      I^\Delta_{3}&=(T_{(1,2),(1,0)}T_{(2,0),(0,0)}+T_{(1,2),(1,1)}T_{(2,1),(0,0)}\\
      &-T_{(2,1),(0,1)}T_{(0,2),(0,0)}-T_{(2,1),(1,1)}T_{(1,2),(0,0)},\\
      &T_{(1,2),(1,0)}T_{(2,0),(1,0)}+T_{(1,2),(1,1)}T_{(2,1),(1,0)}+T_{(1,2),(0,0)}\\
      &-T_{(2,1),(0,1)}T_{(0,2),(1,0)}-T_{(2,1),(1,1)}T_{(1,2),(1,0)},\\
      &T_{(1,2),(1,0)}T_{(2,0),(0,1)}+T_{(1,2),(1,1)}T_{(2,1),(0,1)}\\
      &-T_{(2,1),(0,1)}T_{(0,2),(0,1)}-T_{(2,1),(1,1)}T_{(1,2),(0,1)}-T_{(2,1),(0,0)},\\
      &T_{(0,2),(1,0)}T_{(2,0),(1,1)}+T_{(0,2),(1,1)}T_{(2,1),(1,1)}+T_{(0,2),(0,1)}\\
      &-T_{(2,1),(0,1)}T_{(0,2),(1,1)}-T_{(2,1),(1,1)}T_{(1,2),(1,1)}-T_{(2,1),(1,0)})\,.
    \end{split}
  \end{equation*}
  The ideals $I^\Delta_{2,1}$ and $I^\Delta_{2,2}$ 
  correspond to the equalities derived in Step 2 of the proof of Proposition \ref{representing}, 
  i.e. from the multiplicativity condition $\phi(x^{\alpha+\lambda})=\phi(x^\alpha)\phi(x^\lambda)$
  for $\alpha,\alpha+\lambda\in\Delta^{(1)}$. 
  As for $I^\Delta_{2,1}$, we choose $\alpha=(0,2),\lambda=(1,0)$. 
  As for $I^\Delta_{2,2}$, we choose $\alpha=(2,0),\lambda=(0,1)$. 
  
  The ideal $I^\Delta_{3}$
  corresponds to the equalities derived in Step 3 of the proof of Proposition \ref{representing}, 
  i.e. from the unambiguity condition $\phi(x^{\alpha+\lambda})=\phi(x^{\alpha^\prime+\lambda^\prime})$
  for $\alpha,\alpha^\prime\in\Delta^{(1)}$ such that 
  $\alpha+\lambda=\alpha^\prime+\lambda^\prime\in\Delta^{(2)}$. 
  The only choice for that is $\alpha=(1,2)$, $\lambda=(1,0)$, 
  $\alpha ^\prime =(2,1)$,$\lambda^\prime=(0,1)$. 
\end{ex}

In other words, unambiguity only has to be guaranteed at $(2,2)\in\Delta^{(2)}$. 
That point a corner of $\Delta\cup\Delta^{(1)}$, as we see from Figure \ref{figex2}. 
Moreover, it arises from the only edge point of $\Delta$ by addition of $(1,1)$. 

\begin{center}
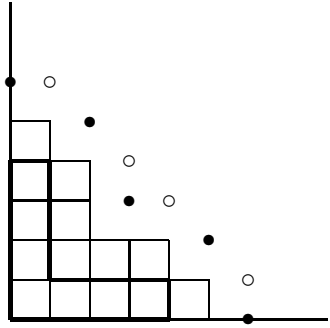
\begin{figure}[ht]
  \begin{picture}(180,160)
    \put(90,20){\line(1,0){60}}
    \put(30,80){\line(0,1){60}}
    \linethickness{.5mm}
    \put(30,20){\line(1,0){60}}
    \put(30,20){\line(0,1){60}}
    \put(45,35){\line(1,0){45}}
    \put(45,35){\line(0,1){45}}
    \put(30,80){\line(1,0){15}}
    \put(90,20){\line(0,1){15}}
    \thinlines
    \put(30,35){\line(1,0){15}}
    \put(30,50){\line(1,0){15}}
    \put(30,65){\line(1,0){15}}
    \put(30,95){\line(1,0){15}}
    \put(45,20){\line(0,1){15}}
    \put(60,20){\line(0,1){15}}
    \put(75,20){\line(0,1){15}}
    \put(105,20){\line(0,1){15}}
    \put(45,50){\line(1,0){45}}
    \put(45,65){\line(1,0){15}}
    \put(45,80){\line(1,0){15}}
    \put(90,35){\line(1,0){15}}
    \put(60,35){\line(0,1){45}}
    \put(75,35){\line(0,1){15}}
    \put(90,35){\line(0,1){15}}
    \put(45,80){\line(0,1){15}}
    \put(30,110){\circle*{4}}
    \put(45,110){\circle{4}}
    \put(60,95){\circle*{4}}
    \put(75,80){\circle{4}}
    \put(75,65){\circle*{4}}
    \put(90,65){\circle{4}}
    \put(105,50){\circle*{4}}
    \put(120,35){\circle{4}}
    \put(120,20){\circle*{4}}
  \end{picture}
\caption{A standard set $\Delta$ together with $\Delta^{(1)}$ and $\Delta^{(2)}$.}
\label{figex2}
\end{figure}
\end{center}

\begin{ex}\label{ex2}
  Consider the standard set $\Delta$, along with its borders $\Delta^{(1)}$ and $\Delta^{(2)}$, 
  as depicted in Figure \ref{figex2}. 
  We define the polynomial ring $R$ by the same formula as in the previous example. Then
  \begin{equation*}
    \Hi^{\Delta}_{S/k}={\rm Spec}\,R/I^\Delta\,, 
  \end{equation*}
  where 
  \begin{equation*}
    I^\Delta=I^\Delta_{2,1}+\ldots+I_{2,6}+I^\Delta_{3,1}+I_{3,2}\,.
  \end{equation*}
  We do not write down the summands of $I$ explicitly, but rather describe them as follows: 
  The ideals $I^\Delta_{2,i}$ and $I^\Delta_{3,j}$ correspond to the equalities derived in Step 2 and Step 3, resp., 
  of the proof of Proposition \ref{representing}, according to the following values of $\alpha,\lambda$, 
  and $\alpha^\prime,\lambda^\prime$, resp.
  \begin{center}
    \begin{tabular}{|c|c|c|}
      \hline
      $I^\Delta_{2,1}$ & $\alpha=(0,5)$ & $\lambda=(1,0)$ \\ \hline
      $I^\Delta_{2,2}$ & $\alpha=(2,3)$ & $\lambda=(0,1)$ \\ \hline
      $I^\Delta_{2,3}$ & $\alpha=(2,2)$ & $\lambda=(0,1)$ \\ \hline
      $I^\Delta_{2,4}$ & $\alpha=(2,2)$ & $\lambda=(1,0)$ \\ \hline
      $I^\Delta_{2,5}$ & $\alpha=(3,2)$ & $\lambda=(1,0)$ \\ \hline
      $I^\Delta_{2,6}$ & $\alpha=(5,0)$ & $\lambda=(0,1)$ \\ \hline
    \end{tabular}
  \end{center}
  \begin{center}
    \begin{tabular}{|c|c|c|c|c|}
      \hline
      $I^\Delta_{3,1}$ & $\alpha=(1,5)$ & $\lambda=(1,0)$ & $\alpha^\prime=(2,4)$ & $\lambda^\prime=(0,1)$ \\ \hline
      $I^\Delta_{3,2}$ & $\alpha=(4,2)$ & $\lambda=(0,1)$  & $\alpha^\prime=(5,1)$ & $\lambda^\prime=(0,1)$\\ \hline
    \end{tabular}
  \end{center}
\end{ex}

The interesting observation here is that the summands $I_{3,1}$ and $I_{3,2}$ 
correspond to the two edge points $(1,4)$ and $(4,1)$ of $\Delta$. 

\begin{center}
\begin{figure}
  \begin{picture}(120,175)
    \put(55,50){\line(1,0){45}}
    \put(40,110){\line(0,1){45}}
    \put(29.8,43.2){\line(-3,-2){30}}
    \linethickness{.5mm}
    \put(40,50){\line(1,0){15}}
    \put(40,110){\line(1,0){15}}
    \put(29.8,43.2){\line(1,0){15}}
    \put(29.8,103.2){\line(1,0){15}}
    \put(40,50){\line(0,1){60}}
    \put(55,50){\line(0,1){60}}
    \put(29.8,43.2){\line(0,1){60}}
    \put(44.8,43.2){\line(0,1){60}}
    \put(40,50){\line(-3,-2){10}}
    \put(40,50.1){\line(-3,-2){10}}
    \put(40,50.2){\line(-3,-2){10}}
    \put(40,50.3){\line(-3,-2){10}}
    \put(40,50.4){\line(-3,-2){10}}
    \put(40,49.9){\line(-3,-2){10}}
    \put(40,49.8){\line(-3,-2){10}}
    \put(40,49.7){\line(-3,-2){10}}
    \put(40,49.6){\line(-3,-2){10}}
    \put(40,49.5){\line(-3,-2){10}}
    \put(40,49.4){\line(-3,-2){10}}
    \put(55,50){\line(-3,-2){10}}
    \put(55,50.1){\line(-3,-2){10}}
    \put(55,50.2){\line(-3,-2){10}}
    \put(55,50.3){\line(-3,-2){10}}
    \put(55,50.4){\line(-3,-2){10}}
    \put(55,49.9){\line(-3,-2){10}}
    \put(55,49.8){\line(-3,-2){10}}
    \put(55,49.7){\line(-3,-2){10}}
    \put(55,49.6){\line(-3,-2){10}}
    \put(55,49.5){\line(-3,-2){10}}
    \put(55,49.4){\line(-3,-2){10}}
    \put(40,110){\line(-3,-2){10}}
    \put(40,110.1){\line(-3,-2){10}}
    \put(40,110.2){\line(-3,-2){10}}
    \put(40,110.3){\line(-3,-2){10}}
    \put(40,110.4){\line(-3,-2){10}}
    \put(40,109.9){\line(-3,-2){10}}
    \put(40,109.8){\line(-3,-2){10}}
    \put(40,109.7){\line(-3,-2){10}}
    \put(40,109.6){\line(-3,-2){10}}
    \put(40,109.5){\line(-3,-2){10}}
    \put(40,109.4){\line(-3,-2){10}}
    \put(55,110){\line(-3,-2){10}}
    \put(55,110.1){\line(-3,-2){10}}
    \put(55,110.2){\line(-3,-2){10}}
    \put(55,110.3){\line(-3,-2){10}}
    \put(55,110.4){\line(-3,-2){10}}
    \put(55,109.9){\line(-3,-2){10}}
    \put(55,109.8){\line(-3,-2){10}}
    \put(55,109.7){\line(-3,-2){10}}
    \put(55,109.6){\line(-3,-2){10}}
    \put(55,109.5){\line(-3,-2){10}}
    \put(55,109.4){\line(-3,-2){10}}
    \thinlines
    \put(44.8,43.2){\line(1,0){15}}
    \put(29.8,58.2){\line(1,0){30}}
    \put(29.8,73.2){\line(1,0){30}}
    \put(29.8,88.2){\line(1,0){30}}
    \put(44.8,103.2){\line(1,0){15}}
    \put(40,65){\line(1,0){30}}
    \put(40,80){\line(1,0){30}}
    \put(40,95){\line(1,0){30}}
    \put(55,110){\line(1,0){15}}
    \put(59.8,43.2){\line(0,1){60}}
    \put(70,50){\line(0,1){60}}
    \put(70,50){\line(-3,-2){10}}
    \put(70,65){\line(-3,-2){10}}
    \put(70,80){\line(-3,-2){10}}
    \put(70,95){\line(-3,-2){10}}
    \put(70,110){\line(-3,-2){10}}
    \put(40,125){\line(1,0){15}}
    \put(29.8,118.2){\line(1,0){15}}
    \put(40,110){\line(0,1){15}}
    \put(55,110){\line(0,1){15}}
    \put(29.8,103.2){\line(0,1){15}}
    \put(44.8,103.2){\line(0,1){15}}
    \put(40,125){\line(-3,-2){10}}
    \put(55,125){\line(-3,-2){10}}
    \put(19.8,36.6){\line(1,0){15}}
    \put(19.8,51.6){\line(1,0){15}}
    \put(19.8,66.6){\line(1,0){15}}
    \put(19.8,81.6){\line(1,0){15}}
    \put(19.8,96.6){\line(1,0){15}}
    \put(19.8,36.6){\line(0,1){60}}
    \put(34.8,36.6){\line(0,1){60}}
    \put(40,65){\line(-3,-2){20}}
    \put(40,80){\line(-3,-2){20}}
    \put(40,95){\line(-3,-2){20}}
    \put(55,65){\line(-3,-2){20}}
    \put(55,80){\line(-3,-2){20}}
    \put(55,95){\line(-3,-2){20}}
    \put(44.8,43.2){\line(-3,-2){10}}
    \put(44.8,103.2){\line(-3,-2){10}}
    \put(29.8,103.2){\line(-3,-2){10}}
    \put(49.8,36.6){\circle*{4}}
    \put(19.8,111.6){\circle*{4}}
    \put(70,125){\circle*{4}}
  \end{picture}
\caption{A standard set $\Delta$ together with $\Delta^{(1)}$ and three elements of $\Delta^{(2)}$.}
\label{figex3}
\end{figure}
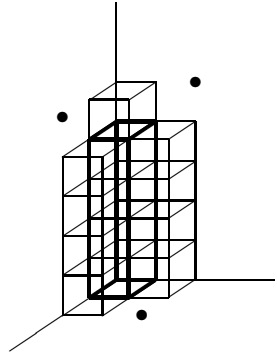
\end{center}

\begin{ex}\label{ex3}
  Consider the standard set $\Delta$, along with its borders $\Delta^{(1)}$ and $\Delta^{(2)}$, 
  as depicted in Figure \ref{figex3}. 
  We define the polynomial ring $R$ by the same formula as in the previous two examples. Then
  \begin{equation*}
    \Hi^{\Delta}_{S/k}={\rm Spec}\,R/I^\Delta\,, 
  \end{equation*}
  where 
  \begin{equation*}
    I^\Delta=I^\Delta_{2,1}+\ldots+I^\Delta_{2,30}+I^\Delta_{3,1}+I^\Delta_{3,2}+I^\Delta_{3,3}\,.
  \end{equation*}
  The summands of $I^\Delta$ have analogous descriptions as in the previous example. 
  The ideals $I^\Delta_{2,i}$ correspond to the 30 possible values of $\alpha,\lambda$ 
  such that $\alpha$ and $\alpha+\lambda$ both lie in $\Delta^{(1)}$. 
  The ideals $I^\Delta_{3,i}$ correspond to the following values of $\alpha,\lambda$ 
  and $\alpha^\prime,\lambda^\prime$, resp.
  \begin{center}
    \begin{tabular}{|c|c|c|c|c|}
      \hline
      $I^\Delta_{3,1}$ & $\alpha=(1,2,0)$ & $\lambda=(1,0,0)$ & 
      $\alpha^\prime=(2,1,0)$ & $\lambda^\prime=(0,1,0)$ \\ \hline
      $I^\Delta_{3,2}$ & $\alpha=(1,0,5)$ & $\lambda=(1,0,0)$  
      & $\alpha^\prime=(2,0,4)$ & $\lambda^\prime=(0,0,1)$\\ \hline
      $I^\Delta_{3,3}$ & $\alpha=(0,4,2)$ & $\lambda=(0,1,0)$  
      & $\alpha^\prime=(0,5,1)$ & $\lambda^\prime=(0,0,1)$\\ \hline
    \end{tabular}
  \end{center}
\end{ex}

Again the summands $I^\Delta_{3,1}$, $I^\Delta_{3,2}$ and $I^\Delta_{3,3}$ 
correspond to the three edge points $(1,1,0)$, $(1,0,4)$ and $(0,4,1)$ of $\Delta$. 
The following theorem explains this finding. 

\begin{thm}\label{fewer}
  Let $R^\Delta=R/I^\Delta$ be the coordinate ring of $\Hi^{\Delta}_{S/k}$ as presented in Proposition \ref{representing}, 
  where $N=\Delta$. Then the two summands $I^\Delta_{2}$ and $I^\Delta_{3}$, resp., of $I^\Delta$ can be replaced by the ideals
  \begin{equation*}
    \begin{split}
      I^\Delta_{2,e}&=(T_{\alpha+\lambda,\beta}-\sum_{\gamma\in\Delta}T_{\alpha,\gamma}T_{\gamma+\lambda,\beta};\\
      &\alpha\in\Delta^{(1)},\lambda\in\{e_{1},\ldots,e_{n}\}
      \text{ s.t. }\alpha+\lambda\in\Delta^{(1)},\beta\in\Delta)\text{ and}\\
      I^\Delta_{3,e}&=(\sum_{\gamma\in\Delta}T_{\epsilon+\lambda^\prime,\gamma}T_{\gamma+\lambda,\beta}
      -\sum_{\gamma\in\Delta}T_{\epsilon+\lambda,\gamma}T_{\gamma+\lambda^\prime,\beta};\\
      &(\epsilon,\lambda,\lambda^\prime)\text{ is an edge triple of }\Delta,\beta\in\Delta)\,,
    \end{split}
  \end{equation*}
  resp., in the polynomial ring $R$. 
\end{thm}

\begin{proof}
  The statement about $I^\Delta_{2}$ is easy to prove: If $\alpha\in\Delta$, 
  then $T_{\alpha,\gamma}=\delta_{\alpha,\gamma}$, hence 
  $T_{\alpha+\lambda,\beta}-\sum_{\gamma\in\Delta}T_{\alpha,\gamma}T_{\gamma+\lambda,\beta}=0$. 
  This polynomial can therefore be eliminated from the set of generators of $I^\Delta_{2}$. 
  In the rest of the proof, we show that we may replace $I^\Delta_{3}$ by $I^\Delta_{3,e}$.
  
  We define $\Gamma_{0}$ to be the set of all sums 
  $\alpha+\lambda=\alpha^\prime+\lambda^\prime\in\Delta^{(2)}$
  where $\alpha\neq\alpha^\prime\in\Delta^{(1)}$ and $\lambda\neq\lambda^\prime\in\{e_{1},\ldots,e_{n}\}$. 
  Remember that the generators of the ideal $I^\Delta_{3}$ correspond to equations
  \begin{equation}\label{nonandprime}
    \phi(x^\alpha)\phi(x^\lambda)=\phi(x^{\alpha^\prime})\phi(x^{\lambda^\prime})\,,
  \end{equation}
  where $\alpha+\lambda=\alpha^\prime+\lambda^\prime\in\Gamma_{0}$, in the following way: 
  Equation \eqref{nonandprime} translates into the system of equations 
  \begin{equation*}
    \forall\beta\in\Delta:\sum_{\gamma\in\Delta}a_{\alpha,\gamma}a_{\gamma+\lambda,\beta}
    =a_{\alpha^\prime,\gamma}a_{\gamma+\lambda^\prime,\beta}\,,
  \end{equation*}
  which system is then replaced by the generators 
  \begin{equation*}
    \sum_{\gamma\in\Delta}T_{\alpha,\gamma}T_{\gamma+\lambda,\beta}
    -\sum_{\gamma\in\Delta}T_{\alpha^\prime,\gamma}T_{\gamma+\lambda^\prime,\beta}\,,\beta\in\Delta
  \end{equation*}
  of $I^\Delta_{3}$. 
  Analogously, the generators of the ideal $I^\Delta_{2,e}$ correspond to equations 
  \begin{equation}\label{2}
    \phi(x^{\alpha+\lambda})=\phi(x^\alpha)\phi(x^\lambda)\,,
  \end{equation}
  where $\alpha,\alpha+\lambda\in\Delta^{(1)}$. 
  Therefore we may assume that \eqref{2} holds for all $\alpha,\alpha+\lambda\in\Delta^{(1)}$. 
  (As we have seen in the first paragraph of the present proof, 
  the case where $\alpha$ lies in $\Delta$ gives a trivial generator of $I^\Delta_{2}$. 
  Accordingly, equation \eqref{2} is trivial if $\alpha\in\Delta$.)
  
  Our first claim is that if \eqref{nonandprime} holds for all 
  $\alpha+\lambda=\alpha^\prime+\lambda^\prime\in\Gamma_{0}\cap\mathscr{C}(\Delta\cup\Delta^{(1)})$, 
  then \eqref{nonandprime} automatically holds for all 
  $\alpha+\lambda=\alpha^\prime+\lambda^\prime\in\Gamma_{0}$.   
  For this we define $\Gamma=\Gamma_{0}-\mathscr{C}(\Delta\cup\Delta^{(1)})$ 
  and prove the claim by induction over $\Gamma$. 
  Take an arbitrary $\alpha+\lambda=\alpha^\prime+\lambda^\prime\in\Gamma$. 
  First we observe that there exists some $\nu\in\{e_{1},\ldots,e_{n}\}$ 
  such that $\alpha+\lambda-\nu$ lies in $\Delta^{(2)}$, 
  since if all $\alpha+\lambda-\nu$ were to lie either in $\Delta^{(1)}$ or outside $\mathbb{N}^n$, 
  then $\alpha+\lambda$ would lie in $\mathscr{C}(\Delta\cup\Delta^{(1)})$, a contradiction. 
  As $\alpha+\lambda-\nu=\alpha^\prime+\lambda^\prime-\nu$ lies in $\Delta^{(2)}$ and not in $\Delta^{(1)}$, 
  the element $\nu$ equals neither $\lambda$ nor $\lambda^\prime$. 
  Therefore both $\alpha-\nu$ and $\alpha^\prime-\nu$ lie in $\mathbb{N}^n$, 
  hence in $\Delta^{(1)}$.
  It follows that $(\alpha-\nu)+\lambda=(\alpha^\prime-\nu)+\lambda^\prime$ lies in $\Gamma_{0}$. 
  If $(\alpha-\nu)+\lambda$ lies in $\Gamma_{0}\cap\mathscr{C}(\Delta\cup\Delta^{(1)})$, 
  then \eqref{nonandprime} holds for $(\alpha-\nu)+\lambda=(\alpha^\prime-\nu)+\lambda^\prime$
  by assumption. 
  If $(\alpha-\nu)+\lambda$ lies in the complement, 
  then \eqref{nonandprime} holds for $(\alpha-\nu)+\lambda=(\alpha^\prime-\nu)+\lambda^\prime$
  by our induction hypothesis, as $(\alpha-\nu)+\lambda\prec\alpha+\lambda$. 
  In both cases we obtain 
  \begin{equation*}
    \phi(x^\alpha)\phi(x^\lambda)
    =\phi(x^{\alpha-\nu})\phi(x^\nu)\phi(x^\lambda)
    =\phi(x^{\alpha^\prime-\nu})\phi(x^\nu)\phi(x^{\lambda^\prime})
    =\phi(x^{\alpha^\prime})\phi(x^{\lambda^\prime})\,.
  \end{equation*}
  Here we used \eqref{2} for the first and the last equality. 
  Therefore \eqref{nonandprime} holds for $\alpha+\lambda=\alpha^\prime+\lambda^\prime$, 
  and the first claim is proved. 
  
  Our second claim is that if \eqref{nonandprime} holds for all
  $\alpha+\lambda=\alpha^\prime+\lambda^\prime\in\Gamma_{0}\cap\mathscr{C}(\Delta\cup\Delta^{(1)})$ 
  such that $\alpha-\lambda^\prime=\alpha^\prime-\lambda$ lies in $\Delta$, 
  then \eqref{nonandprime} holds for all
  $\alpha+\lambda=\alpha^\prime+\lambda^\prime\in\Gamma_{0}\cap\mathscr{C}(\Delta\cup\Delta^{(1)})$ 
  with no additional restriction. 
  Indeed, from $\lambda\neq\lambda^\prime$ it follows that $\alpha-\lambda^\prime=\alpha^\prime-\lambda$
  lies in $\mathbb{N}^n$, and therefore either in $\Delta^{(1)}$ or in $\Delta$. 
  In the former case we compute
  \begin{equation*}
    \phi(x^\alpha)\phi(x^\lambda)
    =\phi(x^{\alpha-\lambda^\prime})\phi(x^{\lambda^\prime})\phi(x^\lambda)
    =\phi(x^{\alpha^\prime-\lambda})\phi(x^{\lambda^\prime})\phi(x^\lambda)
    =\phi(x^{\alpha^\prime})\phi(x^{\lambda^\prime})\,,
  \end{equation*}
  again using \eqref{2} for the first and the last equality. 
  In the latter case, there is nothing to prove, as \eqref{nonandprime} holds by assumption. 
  
  Each of the remaining $\alpha+\lambda=\alpha^\prime+\lambda^\prime$
  lies in $\Gamma_{0}\cap\mathscr{C}(\Delta\cup\Delta^{(1)})$ and has the property that 
  the element $\epsilon=\alpha-\lambda^\prime=\alpha^\prime-\lambda$ lies in $\Delta$. 
  This is equivalent to $(\epsilon,\lambda,\lambda^\prime)$ forming an edge triple. 
  The theorem follows.
\end{proof}

Note that though $I^\Delta_{1}+I^\Delta_{2}+I^\Delta_{3}=I^\Delta_{1}+I^\Delta_{2,e}+I^\Delta_{3,e}$, 
the ideal $I^\Delta_{2,e}$ is strictly smaller than $I^\Delta_{2}$, 
and the ideal $I^\Delta_{3,e}$ is strictly smaller than $I^\Delta_{3}$. 
Of course the presentations of the coordinate ring of $\Hi^{\prec\Delta}_{S/k}$ 
given in Corollaries \ref{prime} and \ref{min} can be reformulated in the spirit of Theorem \ref{fewer}. 

Example \ref{ex2} illustrates the third claim in the proof, and Example \ref{ex3} illustrates the first claim. 
The theorem we just proved is a substantial improvement of Proposition \ref{representing}, 
as it makes the number of generators needed for $I^\Delta$ much smaller. 
The concept of {\it across-the-corner neighbors} of \cite{krarticle} 
uses the same idea which we used in the proof of the second claim here. 
Yet our set of generators for $I^\Delta$ of Theorem \ref{fewer} is smaller than the set of generators of the cited article. 

\begin{ex}\label{triangleagain}
  Let $k=\mathbb{Z}$ and $\Delta=\{0,e_1,e_2\}$ be as in Example \ref{triangle}. 
  Then $\delta^{(1)}=\{2e_1,e_1+e_2,2e_2\}$. 
  We use the variables $T_{\alpha,\beta}$, $\alpha\in\Delta^{(1)}$, 
  $\beta\in\Delta$ for presenting $R^\Delta$ as in the theorem. The ideal $I^\Delta_{2,e}$ vanishes, 
  as there are no $\alpha\in\Delta$, $\lambda\in\{e_1,e_2\}$ such that $\alpha+\lambda\in\Delta^{(1)}$. 
  There are two edge points, $e_1$ and $e_2$, hence six generators of $I^\Delta_{3,e}$. 
  They boil down to the following five conditions on $T_{\alpha,\beta}$:
  \begin{equation*}
    \begin{split}
      T_{(2,0),(0,0)}&=T_{(1,1),(1,0)}T_{(2,0),(0,1)}+T_{(1,1),(0,1)}^2\\
      &-T_{(2,0),(1,0)}T_{(1,1),(0,1)}-T_{(2,0),(0,1)}T_{(0,2),(0,1)}\,,\\
      T_{(1,1),(0,0)}&=T_{(2,0),(0,1)}T_{(0,2),(1,0)}-T_{(1,1),(0,1)}T_{(1,1),(1,0)}\,,\\
      T_{(0,2),(0,0)}&=T_{(1,1),(1,0)}^2+T_{(1,1),(0,1)}T_{(0,2),(1,0)}\\
      &-T_{(0,2),(1,0)}T_{(2,0),(1,0)}-T_{(0,2),(0,1)}T_{(1,1),(1,0)}
    \end{split}
  \end{equation*}
  and 
  \begin{equation*}
    \begin{split}
      &T_{(2,0),(1,0)}T_{(1,1),(0,0)}+T_{(2,0),(0,1)}T_{(0,2),(0,0)}\\
      -&T_{(1,1),(1,0)}T_{(2,0),(0,0)}-T_{(1,1),(0,1)}T_{(1,1),(0,0)}=0\,,\\
      &T_{(1,1),(1,0)}T_{(1,1),(0,0)}+T_{(1,1),(0,1)}T_{(0,2),(0,0)}\\
      -&T_{(0,2),(1,0)}T_{(2,0),(0,0)}-T_{(0,2),(0,1)}T_{(1,1),(0,0)}=0\,.
    \end{split}
  \end{equation*}
  Upon substituting $T_{(2,0),(0,0)}$, $T_{(1,1),(0,0)}$ and $T_{(0,2),(0,0)}$ into the last two equations, 
  we see that the last two equations are trivial. 
  Therefore $\Hi^\Delta_{S/k}$ is the 6-dimensional affine space with parameters 
  $T_{(2,0),(1,0)}$, $T_{(2,0),(0,1)}$, $T_{(1,1),(1,0)}$, $T_{(1,1),(0,1)}$, $T_{(0,2),(1,0)}$, $T_{(0,2),(0,1)}$. 
  (Note that this also follows from Huibregtse's result on ``sawtooth'' standard sets, 
  which we discussed at the end of Section \ref{affines} above.)
  In Example \ref{triangle}, we chose 
  $T_{(2,0),(1,0)}=T_{(2,0),(0,1)}=1$, $T_{(1,1),(1,0)}=T_{(1,1),(0,1)}=2$, $T_{(0,2),(1,0)}=T_{(0,2),(0,1)}=1$
  for obtaining $I_a$. 
  This makes $I_a$ monic for the graded lexicographic order, but not monic for the lexicographic order where $x_1\succ x_2$. 
  For obtaining $I_b$, we replaced $T_{(0,2),(1,0)}=1$ by $T_{(0,2),(1,0)}=0$. 
  This makes this ideal $I_b$ monic for both the graded lexicographic and the lexicographic order. 
\end{ex}

\begin{ex}\label{axis}
  For $\Delta=\{0,\ldots,(d-1)e_{1}\}\subset\mathbb{N}^n$, 
  the scheme $\Hi^\Delta_{S/k}$ is the $dn$-dimensional affine space with coordinates 
  $T_{de_{1},\beta},T_{e_{2},\beta},\ldots,T_{e_{n},\beta}$, for $\beta\in\Delta$. 
\end{ex}

We just give a hint for the proof of this: First consider the case $n=2$. For $a=0,\ldots,d-2$, 
the equations $\phi(x^{(a+1,1)})=\phi(x^{(a,1)})\phi(x^{(1,0)})$ give explicit formulas for all 
$T_{(a+1,1),\beta}$ as polynomials in $T_{(0,1),\gamma}$ and $T_{(d,0),\gamma}$, $\gamma\in\Delta$. 
In the system of polynomial equations corresponding to the equation 
$\phi(x^{(d-1,1)})\phi(x^{(1,0)})=\phi(x^{(d,0)})\phi(x^{(0,1)})$, 
replace each $T_{(a+1,1),\beta}$, for $a=0,\ldots,d-2$, by the polynomial expression from above. 
Then it turns out that the above system of equations is trivial. The assertion follows in the case $n=2$. 
For larger $n$, fix an $i\in\{2,\ldots,n\}$ and derive analogous formulas for 
$T_{(a+1)e_{i},1)\beta}$ as polynomials in $T_{e_{i},\gamma}$ and $T_{(d,0),\gamma}$, $\gamma\in\Delta$.
The system of polynomial equations corresponding to the equation
$\phi(x^{(d-1)e_{1}+e_{i})})\phi(x^{e_{1}})=\phi(x^{de_{1}})\phi(x^{e_{i}})$ is trivial again. 
By Theorem \ref{fewer}, these equations, for $i=2,\ldots,n$, are all we have to study. 
The assertion follows for all $n$. 


\section{The universal objects}\label{universal}

Equation \eqref{ftomatrix} describes the transition between the matrix 
$(a_{\alpha\gamma})$ of a homomorphism $\phi\in\HHi^{\prec\Delta}_{S/k}(B)$ 
and the elements $f_{\alpha}$ of the kernel of $\phi$.
Together with Corollary \ref{prime} and Proposition \ref{representing}, 
this enables us to directly write down the universal objects. 

\begin{pro}\label{prouniversal}
  \begin{enumerate}
    \item[(i)] Let $R^\Delta$ be the coordinate ring of $\Hi^{\Delta}_{S/k}$ 
      as presented in Proposition \ref{representing} or Theorem \ref{fewer}. 
      Then the universal object of the representable functor $\HHi^{\Delta}_{S/k}$
      is the affine scheme 
      \begin{equation*}
        U^\Delta={\rm Spec}\,R^\Delta[x]/
        (x^\alpha-\sum_{\beta\in\Delta}T_{\alpha,\beta}x^\beta;\alpha\in N\cup N^{(1)})
      \end{equation*}
      over $\Hi^{\Delta}_{S/k}={\rm Spec}\,R^\Delta$. 
    \item[(ii)] Let $R^{\prec\Delta}$ be the coordinate ring of $\Hi^{\prec\Delta}_{S/k}$ 
      as presented in Corollaries \ref{prime} or \ref{min}. 
      Then the universal object of the representable functor $\HHi^{\prec\Delta}_{S/k}$
      is the affine scheme 
      \begin{equation*}
        U^{\prec\Delta}={\rm Spec}\,R^{\prec\Delta}[x]/(x^\alpha-\sum_{\beta\in\Delta,\,\beta\prec\alpha}
        T_{\alpha,\beta}x^\beta;\alpha\in\mathscr{C}(\Delta))
      \end{equation*}
      over $\Hi^{\prec\Delta}_{S/k}={\rm Spec}\,R^{\prec\Delta}$. 
    \end{enumerate}
\end{pro}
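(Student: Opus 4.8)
The plan is to verify the defining universal property of the universal object directly, using Yoneda's lemma: I must show that for every $k$-algebra $B$, the set $\HHi^{\delta}_{S/k}(B)$ is in natural bijection with $\mathrm{Mor}_{k}(\mathrm{Spec}\,B,\,\Hi^{\delta}_{S/k})$, compatibly with the tautological family carried by $U^\delta$. Concretely, a morphism $\mathrm{Spec}\,B\to\Hi^{\delta}_{S/k}$ is a $k$-algebra homomorphism $R^\delta\to B$, which (by Proposition \ref{representing}) is the same as a choice of elements $a_{\alpha,\beta}=$ (image of $T_{\alpha,\beta}$) $\in B$, for $\alpha\in N\cup N^{(1)}$, $\beta\in\delta$, satisfying \eqref{cond1}, \eqref{cond2}, \eqref{cond3}; and such a datum is exactly an element $\phi\in\HHi^{\delta}_{S/k}(B)$, with $\phi(x^\alpha)=\sum_{\beta\in\delta}a_{\alpha,\beta}x^\beta$ determined on all monomials by multiplicativity (the content of Steps 1--5 and the induction step of that proof). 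So the first half of the proof is simply to recall that this correspondence between homomorphisms $R^\delta\to B$ and elements of $\HHi^{\delta}_{S/k}(B)$ is exactly what Proposition \ref{representing} establishes.

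The second half is to identify the base change of $U^\delta$ along such a morphism with the actual ideal/quotient produced by $\phi$. Given $\phi\in\HHi^{\delta}_{S/k}(B)$ corresponding to $R^\delta\to B$, forming the fibre product $U^\delta\times_{\Hi^{\delta}_{S/k}}\mathrm{Spec}\,B$ amounts to tensoring: one gets $\mathrm{Spec}\,B[x]/\bigl(x^\alpha-\sum_{\beta\in\delta}a_{\alpha,\beta}x^\beta;\ \alpha\in N\cup N^{(1)}\bigr)$. I then need to check that this ideal equals $\ker\phi$. The inclusion $\subseteq$ is immediate since $\phi(x^\alpha-\sum_\beta a_{\alpha,\beta}x^\beta)=0$ by definition of $\phi$. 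For $\supseteq$, I invoke the remark made just before \eqref{infinitepresentation}: the polynomials $x^\alpha-(\iota_\delta\otimes\mathrm{id})\circ\phi(x^\alpha)$, as $x^\alpha$ ranges over \emph{all} monomials, generate the $B$-module $\ker\phi$; and by the multiplicativity established in the proof of Proposition \ref{representing} (the fact that \eqref{mult} propagates from $N\cup N^{(1)}$ to all of $\mathbb{N}^n$), each such generator for $\alpha\notin N\cup N^{(1)}$ already lies in the ideal generated by those with $\alpha\in N\cup N^{(1)}$. Hence the two ideals coincide, and $U^\delta\times_{\Hi^{\delta}_{S/k}}\mathrm{Spec}\,B=\mathrm{Spec}\,Q$ is precisely the quotient attached to $\phi$, naturally in $B$. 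This proves (i).

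For (ii) I proceed in the same way but over the smaller ring $R^{\prime\,\delta}$. An element of $\HHi^{\prime\,\delta}_{S/k}(B)$ is, by the characterizations following its definition and by Corollary \ref{prime}, a homomorphism $R^{\prime\,\delta}\to B$, i.e.\ a choice of $a_{\alpha,\beta}$ satisfying the equations of $I^\delta$ together with $a_{\alpha,\beta}=0$ whenever $\alpha\prec\beta$. Writing $d_{\alpha,\beta}=-a_{\alpha,\beta}$ for $\alpha\in\mathscr{C}(\delta)$ via \eqref{ftomatrix}, the base change of $U^{\prime\,\delta}$ is $\mathrm{Spec}\,B[x]/(f_\alpha;\ \alpha\in\mathscr{C}(\delta))$ with $f_\alpha=x^\alpha+\sum_{\beta\in\delta,\,\beta\prec\alpha}d_{\alpha,\beta}x^\beta$; by Lemma \ref{favouritelemma}(iii) this ideal is monic with reduced Gröbner basis $(f_\alpha)_{\alpha\in\mathscr{C}(\delta)}$ and standard set $\delta$, and it equals $\ker\phi$ by the same two-inclusion argument as above (using that the extra generators $f_\alpha$ for $\alpha\in(\mathbb{N}^n-\delta)-\mathscr{C}(\delta)$, guaranteed by Lemma \ref{favouritelemma}(i), are redundant). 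Naturality in $B$ is clear from the functoriality already checked in Lemma \ref{subfunctor}.

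The main obstacle I anticipate is the bookkeeping in the $\supseteq$ inclusion: one must be careful that the \emph{finite} set of generators indexed by $N\cup N^{(1)}$ (resp.\ by $\mathscr{C}(\delta)$) really does generate the full, a priori infinitely generated, ideal $\ker\phi$. This is not a new computation, however — it is exactly the propagation-of-multiplicativity argument of Steps 4--5 and the induction step in the proof of Proposition \ref{representing} (resp.\ of Lemma \ref{favouritelemma}(i) for the primed case), reinterpreted module-theoretically. So the proof is essentially a matter of assembling Proposition \ref{representing}, Corollary \ref{prime}, Lemma \ref{favouritelemma} and the generation remark preceding \eqref{infinitepresentation}, with the transition formula \eqref{ftomatrix} translating between the matrix picture and the $f_\alpha$ picture.
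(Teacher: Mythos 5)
Your argument is correct and follows the same route as the paper's proof, which simply declares part (i) "clear" and for part (ii) observes that by Lemma \ref{favouritelemma}(i) the generators indexed by $N\cup N^{(1)}$ can be replaced by those indexed by $\mathscr{C}(\delta)$ once one knows $T_{\alpha,\beta}=0$ for $\alpha\prec\beta$ in $R^{\prime\,\delta}$. You merely spell out what "clear" means in (i), namely the Yoneda translation, the module-generation remark preceding \eqref{infinitepresentation}, and the propagation of multiplicativity from $N\cup N^{(1)}$ to all of $\mathbb{N}^n$ to see that the finitely many tautological generators cut out exactly $\ker\phi$ after base change.
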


\begin{proof}
  (i) is clear. As for (ii), the only thing we have to prove is that
  for generating the ideal
  \begin{equation*}
    (x^\alpha-\sum_{\beta\in\Delta,\beta\prec\alpha}T_{\alpha,\,\beta}x^\beta;\alpha\in N\cup N^{(1)})\,,
  \end{equation*}
  it suffices take all $\alpha\in\mathscr{C}(\Delta)$. 
  However, in $R^{\prec\Delta}$ we have $T_{\alpha,\beta}=0$ whenever $\alpha\prec\beta$. 
  Therefore, the assertion follows from Lemma \ref{favouritelemma} (i). 
\end{proof}

Note that the theorem gives us 
\begin{equation*}
  U^\Delta\hookrightarrow\mathbb{A}^n_{\Hi^{\Delta}_{S/k}}\,,
  U^{\prec\Delta}\hookrightarrow\mathbb{A}^n_{\Hi^{\prec\Delta}_{S/k}}\,,
\end{equation*}
resp., as closed subschemes. 
Moreover, the coordinate rings of $U^\Delta$ and $U^{\prec\Delta}$, resp.,
are free over $R^\Delta$ and $R^{\prec\Delta}$, resp., by definition of the functor $\HHi^{\Delta}_{S/k}$.
In particular, the morphisms $U^\Delta\to\Hi^{\Delta}_{S/k}$ and $U^{\prec\Delta}\to\Hi^{\prec\Delta}_{S/k}$ 
are automatically flat. 

Proposition \ref{prouniversal} makes the statement precise that 
{\it the scheme $\Hi^{\prec\Delta}_{S/k}$ 
is the parametrizing space of all reduced Gr\"obner bases in $S$ with standard sets $\Delta$}:
A point ${\rm Spec}\,B\to\Hi^{\prec\Delta}_{S/k}$ 
is a homomorphism $R^{\prec\Delta}\to B$. 
In other words, we assign to the variables $T_{\alpha,\beta}$ values $a_{\alpha,\beta}\in B$ 
which satisfy the structural equations \eqref{structural}. 
Then we define
\begin{equation*}
  f_{\alpha}=x^\alpha-\sum_{\beta\in\Delta,\,\beta\prec\alpha}a_{\alpha,\beta}x^\beta\,.
\end{equation*}
Geometrically this means that we consider the cartesian diagram
\begin{equation*}
  \begin{CD}
    {\rm Spec}\,B[x]/(f_{\alpha};\alpha\in\mathscr{C}(\Delta))@>>>U^{\prec\Delta}\\
    @VVV @VVV\\
    {\rm Spec}\,B@>>>\Hi^{\prec\Delta}_{S/k}\,.
  \end{CD}
\end{equation*}
The structural equations guarantee that the polynomials $f_{\alpha}$ are a reduced Gr\"obner basis.
Equivalently, by Buchberger's $S$-pair criterion (see \cite{cox}, Section 2, \S6), 
the $S$-pairs of the various $f_{\alpha}$ reduce to zero modulo all $f_{\alpha}$.


\section{Connectedness}\label{homogeneous}

\begin{pro}\label{primeconnected}
  There exists a morphism 
  \begin{equation*}
    g:\mathbb{A}^1_{k}\times_{{\rm Spec}\,k}\Hi^{\prec\Delta}_{S/k}\to\Hi^{\prec\Delta}_{S/k}
  \end{equation*}
  such that 
  \begin{itemize}
    \item the restriction of $g$ to $(t-1)\times_{{\rm Spec}\,k}\Hi^{\prec\Delta}_{S/k}$ is the identity; and 
    \item for each point $p$ of $\Hi^{\prec\Delta}_{S/k}$,
    the restriction of $g$ to $\mathbb{A}^1_{k}\times_{{\rm Spec}\,k}p$ is a curve on $\Hi^{\prec\Delta}_{S/k}$
    which connects $p$ with the point defined by the monomial ideal $(x^\alpha;\alpha\in\mathbb{N}^n-\Delta)\subset S$. 
  \end{itemize}
  In particular, if ${\rm Spec}\,k$ is connected, then $\Hi^{\prec\Delta}_{S/k}$ is connected as well. 
\end{pro}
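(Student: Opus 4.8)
The plan is to realize $g$ as the classical Gr\"obner degeneration of curves in Hilbert schemes (as in \cite{eisenbud}, Section 15.8): a one--parameter family whose member over $t=1$ is an arbitrary point of $\Hi^{\prime\,\delta}_{S/k}$ and whose member over $t=0$ is the single $k$--point given by the monomial ideal. Connectedness then drops out because this family contracts the whole scheme onto that one point. Concretely, since $\delta$ is finite the set $\mathcal{F}=\delta\cup\mathscr{C}(\delta)$ is finite, so there is a weight vector $w\in\mathbb{N}^n$ representing $\prec$ on $\mathcal{F}$, i.e.\ with $w\cdot\beta<w\cdot\alpha$ whenever $\beta,\alpha\in\mathcal{F}$ and $\beta\prec\alpha$ (a standard fact; see e.g.\ \cite{cox}). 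Write $R=R^{\prime\,\delta}$ for the coordinate ring of $\Hi^{\prime\,\delta}_{S/k}$ from Corollary \ref{prime}, and write the universal reduced Gr\"obner basis over $\Hi^{\prime\,\delta}_{S/k}$ as $f_{\alpha}=x^\alpha-\sum_{\beta\in\delta,\,\beta\prec\alpha}T_{\alpha,\beta}x^\beta$ for $\alpha\in\mathscr{C}(\delta)$, as in Proposition \ref{prouniversal}(ii). I would then pass to $R[t]$ and form
\begin{equation*}
  \widetilde{f}_{\alpha}=x^\alpha-\sum_{\beta\in\delta,\,\beta\prec\alpha}t^{\,w\cdot\alpha-w\cdot\beta}\,T_{\alpha,\beta}\,x^\beta\,,\qquad\alpha\in\mathscr{C}(\delta)\,,
\end{equation*}
which is a legitimate polynomial because $\beta\prec\alpha$ forces $w\cdot\alpha-w\cdot\beta$ to be a positive integer; put $\widetilde{I}=(\widetilde{f}_{\alpha};\,\alpha\in\mathscr{C}(\delta))\subset R[t][x]$.

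The crux is to show that $\widetilde{I}$ is monic with standard set $\delta$; since the $\widetilde{f}_{\alpha}$ already have the shape \eqref{eltsofkernel}, this makes $\widetilde{I}$ an element of $\HHi^{\prime\,\delta}_{S/k}(R[t])$, which by representability is precisely a morphism $g:\mathbb{A}^1_k\times_{{\rm Spec}\,k}\Hi^{\prime\,\delta}_{S/k}={\rm Spec}\,R[t]\to\Hi^{\prime\,\delta}_{S/k}$. To prove the claim I would first localize at $t$: over $R[t,t^{-1}]$ the assignment $x_i\mapsto t^{-w_i}x_i$ is a ring automorphism of $R[t,t^{-1}][x]$ sending $f_{\alpha}$ to $t^{-w\cdot\alpha}\widetilde{f}_{\alpha}$, hence sending the base change of the universal ideal (which is monic with standard set $\delta$) onto $\widetilde{I}\cdot R[t,t^{-1}][x]$; as this automorphism scales each $x^\beta$ by a unit, $\widetilde{I}\cdot R[t,t^{-1}][x]$ is again monic with standard set $\delta$. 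Back over $R[t]$, the $\widetilde{f}_{\alpha}$ are monic with all non-leading exponents in $\delta$, so Lemma \ref{favouritelemma}(i) shows that the classes of $x^\beta$ ($\beta\in\delta$) generate $R[t][x]/\widetilde{I}$, and they are $R[t]$-linearly independent modulo $\widetilde{I}$ because $t$ is a non--zero--divisor in $R[t]$, so $R[t]\hookrightarrow R[t,t^{-1}]$, and they are independent modulo $\widetilde{I}\cdot R[t,t^{-1}][x]$. Thus ${\rm LE}(\widetilde{I})=\mathbb{N}^n-\delta$, and Lemma \ref{favouritelemma}(iii) yields that $\widetilde{I}$ is monic with reduced Gr\"obner basis $(\widetilde{f}_{\alpha})_{\alpha\in\mathscr{C}(\delta)}$. (Equivalently, this amounts to checking that $R^{\prime\,\delta}$ is positively graded by ${\rm deg}\,T_{\alpha,\beta}=w\cdot\alpha-w\cdot\beta$, with $g$ the associated action of the multiplicative monoid $\mathbb{A}^1$; verifying that the ideal $I^{\prime\,\delta}$ is homogeneous for this grading is exactly where the inductive construction of the $f_\mu$ in Lemma \ref{favouritelemma}(i) is used.) I expect this middle step to be the only genuine obstacle; everything before and after it is formal.

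It then remains to read off the two properties and conclude. Specializing $t\mapsto 1$ turns $\widetilde{f}_{\alpha}$ into $f_{\alpha}$, so the restriction of $g$ to $(t-1)\times_{{\rm Spec}\,k}\Hi^{\prime\,\delta}_{S/k}$ is the identity. Specializing $t\mapsto 0$ turns $\widetilde{f}_{\alpha}$ into $x^\alpha$, so the restriction of $g$ to $(t)\times_{{\rm Spec}\,k}\Hi^{\prime\,\delta}_{S/k}$ is the constant morphism onto the $k$-point $p_0$ defined by the monomial ideal $(x^\alpha;\alpha\in\mathbb{N}^n-\delta)\subset S$. Base-changing $g$ along a point $p:{\rm Spec}\,B\to\Hi^{\prime\,\delta}_{S/k}$ coming from a monic ideal $I\subset B[x]$ gives the family over $B[t]$ generated by the $\widetilde{f}_{\alpha}$ with each $T_{\alpha,\beta}$ specialized to the corresponding Gr\"obner coefficient of $I$; this family equals $I$ at $t=1$ and the monomial ideal at $t=0$, which is the asserted curve on $\Hi^{\prime\,\delta}_{S/k}$ through $p$ and through $p_0$.

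Finally, for connectedness I would argue as follows. Assume ${\rm Spec}\,k$ is connected; then $p_0({\rm Spec}\,k)\subseteq\Hi^{\prime\,\delta}_{S/k}$ is connected, being the continuous image of a connected space. For every point $q\in\Hi^{\prime\,\delta}_{S/k}$, composing $g$ with $\mathbb{A}^1_{\kappa(q)}=\mathbb{A}^1_k\times_{{\rm Spec}\,k}{\rm Spec}\,\kappa(q)\to\mathbb{A}^1_k\times_{{\rm Spec}\,k}\Hi^{\prime\,\delta}_{S/k}$ yields a morphism out of the connected scheme $\mathbb{A}^1_{\kappa(q)}$ whose image is therefore connected, contains $q$ (the value over $t=1$, since $g$ is the identity there), and meets $p_0({\rm Spec}\,k)$ (the values over $t=0$ land there). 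Hence $\Hi^{\prime\,\delta}_{S/k}$ is a union of connected subsets all meeting the fixed connected subset $p_0({\rm Spec}\,k)$, and is therefore connected.
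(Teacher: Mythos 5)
Your proposal is correct, and the overall strategy (a one--parameter Gr\"obner degeneration defined by a weight vector, contracting $\Hi^{\prime\,\delta}_{S/k}$ onto the single monomial point) is the same as the paper's. The one place where you genuinely diverge is the key technical step: showing that $\widetilde{I}=(\widetilde{f}_\alpha;\,\alpha\in\mathscr{C}(\delta))\subset R^{\prime\,\delta}[t][x]$ is monic with standard set $\delta$. The paper does this by hand: using the inductive construction implicit in Lemma~\ref{favouritelemma}(i), it proves by induction on $\alpha\in\delta^{(1)}$ that the border basis coefficients of $\widetilde{J}$ are exactly the rescalings $t^{\ell(\alpha)-\ell(\beta)}$ of those of $J$, and then verifies directly that these rescaled coefficients satisfy the structural equations of Proposition~\ref{representing}. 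You instead localize at $t$: over $R^{\prime\,\delta}[t,t^{-1}]$ the torus automorphism $x_i\mapsto t^{-w_i}x_i$ carries the universal ideal $J$ to $\widetilde{J}$, so $\widetilde{J}[t^{-1}]$ is monic with standard set $\delta$; you then descend to $R^{\prime\,\delta}[t]$ using the injectivity of $R^{\prime\,\delta}[t]\hookrightarrow R^{\prime\,\delta}[t,t^{-1}]$ (since $t$ is a non--zero--divisor) to get linear independence of $(x^\beta)_{\beta\in\delta}$ modulo $\widetilde{I}$, and Lemma~\ref{favouritelemma}(i),(iii) for generation. Your route trades the paper's explicit induction for a localization-and-descent argument; both are valid, and you even note in passing the grading reformulation, which is morally what the paper verifies. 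One small remark: the paper chooses $\ell$ to be positive only on the pairs $(\alpha,\beta)$ with $T_{\alpha,\beta}\neq 0$ in $R^{\prime\,\delta}_{m}$, while your $w$ is chosen to represent $\prec$ on all of $\delta\cup\mathscr{C}(\delta)$; yours is a slightly stronger condition, but both are achievable for the same reason and both suffice. Your final topological argument for connectedness (union of connected sets all meeting the connected set $p_0({\rm Spec}\,k)$) is the detail the paper omits as ``immediate.''
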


We omit the proof of this, just noting that it follows the lines of the well-known construction of curves in Hilbert schemes. 
What is needed for the proof is found in the first few pages of \cite{bayer}, in Section 15.8 of \cite{eisenbud}, 
and in Lemma \ref{favouritelemma} above. 
A crucial part is the existence of a linear map $\ell:\mathbb{Z}^n\to\mathbb{Z}$ such that $\ell(\alpha)>\ell(\beta)$
for all $\alpha\in\Delta^{(1)}$ and $\beta\in\Delta$ such that $T_{\alpha,\beta}\neq0$ in the ring $R^{\prec\Delta}_m$. 
This is guaranteed by \cite{bayer}, Chapter 1, \S1, or \cite{eisenbud}, Exercise 15.12. 
Upon writing $\mathbb{A}^1_k$ as ${\rm Spec}\,k[t]$, 
the ring homomorphism $R^{\prec\Delta}\to k[t]\otimes_{k}R^{\prec\Delta}$ corresponding to $g$ 
sends each $T_{\alpha,\beta}$ to $t^{\ell(\alpha)-\ell(\beta)}T_{\alpha,\beta}$. 

Can we carry this construction over from $\Hi^{\prec\Delta}_{S/k}$ to $\Hi^{\Delta}_{S/k}$? 
Here is one obvious case in which we can. 
We say that $\Delta$ a {\it corner cut} if there exists a linear map $\ell:\mathbb{Z}^n\to\mathbb{Z}$ such that
$\ell(\alpha)>\ell(\beta)$ for all $\alpha\in\mathbb{N}^n$ and all $\beta\in\Delta$. 
If $\prec$ is any term order, we define a new term order $\prec_\ell$ 
by first grading the elements of $\mathbb{N}^n$ w.r.t. the weight $\ell$ and then using $\prec$ as a tie-breaker. 
If $\Delta$ is a corner cut and $\ell$ is the corresponding linear map, 
then obviously $\Hi^{\Delta}_{S/k}=\Hi^{\prec_\ell\Delta}_{S/k}$. In particular, $\Hi^{\Delta}_{S/k}$ is connected. 

However, the condition on $\Delta$ being a corner cut is very restrictive. 
Let us replace it by a slightly weaker condition. 
We say that $\Delta$ a {\it weak corner cut} if there exists a linear map $\ell:\mathbb{Z}^n\to\mathbb{Z}$ such that
$\ell(\alpha)\geq\ell(\beta)$ for all $\alpha\in\mathbb{N}^n$ and all $\beta\in\Delta$. 

\begin{ex}
  Remove from the set $\{\alpha\in\mathbb{N}^n;|\alpha|\leq r\}$
  any subset $S$ of $\{\alpha\in\mathbb{N}^n;|\alpha|=r\}$, 
  then the remaining set $\Delta$ is a standard set. 
  $\Delta$ is a weak corner cut but not a corner cut. 
\end{ex}

For showing connectedness if $\Delta$ is a weak corner cut, we need some more notation. 
Consider the functor
\begin{equation*}
  \begin{split}
    \HHi^{\Delta,\ell}_{S/k}:(k\text{-Alg})&\to(\text{Sets})\\
    B&\mapsto
    \left\lbrace
    \begin{array}{c}
      \phi:B[x]\to Q\text{ in $\HHi^{\Delta}_{S/k}(B)$ such that}\\
      \ker\phi\text{ is homogeneous w.r.t. $\ell$}
    \end{array}
    \right\rbrace\,.
  \end{split}
\end{equation*}
Upon using the notation of Proposition \ref{representing} and Theorem \ref{fewer}, 
we see that $\HHi^{\Delta,\ell}_{S/k}$ is representable by the affine subscheme 
$\Hi^{\Delta,\ell}_{S/k}$ of $\Hi^{\Delta}_{S/k}$ defined by the ideal 
$(T_{\alpha,\beta};\alpha\in\Delta\cup\Delta^{(1)},\beta\in\Delta,\ell(\alpha)\neq\ell(\beta))$
in the coordinate ring $R^\Delta$. 

\begin{pro}
  \begin{enumerate}
    \item[(i)] $\Hi^{\Delta,\ell}_{S/k}$ is an affine space. 
    \item[(ii)] If $\Delta$ is a weak corner cut and ${\rm Spec}\,k$ is connected, then $\Hi^{\Delta}_{S/k}$ is connected. 
  \end{enumerate}
\end{pro}

\begin{proof}
  As for (i), we refer to Theorem 5.3 of \cite{krarticle}, where the same statement is proved.
  As for (ii), we start with the same construction as in Proposition \ref{primeconnected}:
  We define a ring homomorphism $R^{\Delta}\to k[t]\otimes_{k}R^{\Delta}$
  by sending each $T_{\alpha,\beta}$ to $t^{\ell(\alpha)-\ell(\beta)}T_{\alpha,\beta}$ 
  and obtain a morphism $g:\mathbb{A}^1_k\times_{{\rm Spec}\,k}\Hi^{\Delta}_{S/k}\to\Hi^{\Delta}_{S/k}$. 
  If $p$ is an arbitrary point in $\Hi^{\prec\Delta}_{S/k}$, 
  then $g((t),p)$ is a point in the closed subscheme $\Hi^{\prec\Delta,\ell}_{S/k}$ of $\Hi^{\prec\Delta}_{S/k}$. 
  Then the assertion follows from (i). 
\end{proof}

The weak corner cut property is not necessary for $\Hi^{\Delta}_{S/k}$ to be connected. 
If $n=2$, or if $d\leq7$, then $\Hi^d_{S/k}$ is known to be irreducible. 
In these cases all open subschemes $\Hi^{\Delta}_{S/k}$, regardless of the shape of $\Delta$, are irreducible, thus connected. 
References for the case $n=2$ include \cite{fogarty}, \cite{millerbernd}, Theorem 18.7 and \cite{hartshorne}, Theorem 8.11. 
The original reference for the case $d\leq7$ is \cite{mazzola}. 
The authors of \cite{velasco} prove that the upper bound $d\leq7$ for irreducibility of $\Hi^d_{S/k}$ is sharp. 
More precisely, Theorem 1.1 of the cited paper states that for $d\leq8$, 
the scheme $\Hi^d_{S/k}$ is reducible if, and only if, $d=8$ and $n\geq4$. 
In that case there exists a component of dimension $8n-7$ in $\Hi^d_{S/k}$
which consists of local algebras isomorphic to homogeneous algebras with Hilbert function $(1,4,3)$. 
Therefore the candidates for $\Delta$ such that $\Hi^{\Delta}_{S/k}$ might be connected are those in $\mathbb{N}^4$
with Hilbert function $(1,4,3)$. These are found in the following list: 
\begin{equation*}
  \begin{split}
    \Delta_1=&\{0,e_1,e_2,e_3,e_4,e_1+e_2,e_1+e_3,e_2+e_3\}\,,\\
    \Delta_2=&\{0,e_1,e_2,e_3,e_4,e_1+e_2,e_1+e_3,2e_1\}\,,\\
    \Delta_3=&\{0,e_1,e_2,e_3,e_4,e_1+e_2,e_1+e_3,2e_2\}\,,\\
    \Delta_4=&\{0,e_1,e_2,e_3,e_4,e_1+e_2,2e_1,2e_2\}\,,\\
    \Delta_5=&\{0,e_1,e_2,e_3,e_4,e_1+e_2,2e_1,2e_3\}\,,\\
    \Delta_6=&\{0,e_1,e_2,e_3,e_4,e_1+e_2,2e_3,2e_4\}\,,\\
    \Delta_7=&\{0,e_1,e_2,e_3,e_4,2e_1,2e_2,2e_3\}\,.
  \end{split}
\end{equation*}
However, it is easy to see that all these sets are weak corner cuts. 
Moreover, it is easy to see that all standard sets of dimension $n\geq5$ and size $d=8$ are weak corner cuts or corner cuts. 
Therefore the question whether or not $\Hi^{\Delta}_{S/k}$ is connected concerns those $\Delta$ of dimension
$n\geq3$ and size $d\geq9$ which are not (weak) corner cuts, for instance 
\begin{equation*}
  \Delta=\{0,e_1,2e_1,3e_1,e_2,2e_2,3e_2,e_3,2e_3\}\subset\mathbb{N}^3\,.
\end{equation*}
For all $\Delta$ in question, 
the presentation of the coordinate ring of $\Hi^{\Delta}_{S/k}$ given by 
Theorem \ref{fewer} is much too large for testing connectedness of $\Hi^{\Delta}_{S/k}$ by computational means. 
It is not known to the author if any of the $\Hi^{\Delta}_{S/k}$ in question is connected or not. 


\section{Changing the charts}\label{changingcharts}

Let $\Delta$ and $\Pi$ be two standard sets of size $d$. 
We embed them into standard sets $\Delta\subset N\subset\mathbb{N}^n$ and $\Pi\subset M\subset\mathbb{N}^n$. 
The union $N\cup M$ is a standard set containing both $\Delta$ and $\Pi$. 
We write the coordinate ring of $\Hi^{\Delta}_{S/k}$ as in Proposition \ref{representing}, 
with $N$ replaced by $N\cup M$:
\begin{equation*}
  R^\Delta=k[T_{\alpha,\beta};\alpha\in N\cup M\cup(N\cup M)^{(1)},\beta\in\Delta]/I^\Delta\,.
\end{equation*}
We write the coordinate ring of $\Hi^{\Pi}_{S/k}$ in an analogous way, 
replacing the matrix $T=(T_{\alpha,\beta})$ of indeterminates by a matrix of new variables $U=(U_{\alpha,\xi})$:
\begin{equation*}
  R^\Pi=k[U_{\alpha,\xi};\alpha\in N\cup M\cup(N\cup M)^{(1)},\xi\in\Pi]/I^\Pi\,.
\end{equation*}
We decompose the indexing set of the rows as follows: 
\begin{equation*}
  N\cup M\cup(N\cup M)^{(1)}=(\Delta\cap\Pi)\coprod(\Delta-\Pi)\coprod(\Pi-\Delta)\coprod\rho\,.
\end{equation*}
Accordingly, we decompose the matrix $T$ into the blocks
\begin{equation*}
  T=\bordermatrix{ 
    & \Delta\cap\Pi & \Delta-\Pi \cr
    \Delta\cap\Pi & E & 0 \cr
    \Delta-\Pi & 0 & E \cr
    \Pi-\Delta & T_{31} & T_{32} \cr
    \rho & T_{41} & T_{42} }\,,
\end{equation*}
and the matrix $U$ into the blocks
\begin{equation*}
  U=\bordermatrix{ 
    & \Delta\cap\Pi & \Pi-\Delta \cr
    \Delta\cap\Pi & E & 0 \cr
    \Delta-\Pi & U_{21} & U_{22} \cr
    \Pi-\Delta & 0 & E \cr
    \rho & U_{41} & U_{42} }\,.
\end{equation*}
where $E$ is the identity matrix. 
The symbols to the left of the rows and above the columns of $T$ and $U$ 
indicate the sets by which the respective submatrices are indexed.  

\begin{pro}\label{prointersection}
  Let $\Delta\subset N$ and $\Pi\subset M$ be standard sets. 
  \begin{enumerate} 
    \item[(i)] $\Hi^{\Delta}_{S/k}\cap\Hi^{\Pi}_{S/k}$ is the open subscheme 
      \begin{equation*}
        {\rm Spec}\,R^\Delta-\mathbb{V}(\det(T_{32}))\,.
      \end{equation*}
    \item[(ii)] The gluing morphism $\psi_{\Delta,\Pi}$ which identifies the intersection 
      as an open subscheme of $\Hi^{\Delta}_{S/k}$ with an open subscheme
      of $\Hi^{\Pi}_{S/k}$ is given by the homomorphism
      \begin{equation*}
        U\mapsto T=U\cdot
        \left(\begin{array}{cc}
        E&0\\
        T_{31}&T_{32}
        \end{array}\right)
      \end{equation*}
      between the coordinate rings.
    \end{enumerate}
\end{pro}

\begin{proof}
  We only prove the first assertion, as the proof of the second is similar. 
  Take a $k$-algebra $B$ and a homomorphism which lies in both $\HHi^{\Delta}_{S/k}(B)$ 
  and $\HHi^{\Pi}_{S/k}(B)$. 
  This homomorphism is represented by two surjections $\phi$ and $\phi^\prime$, respectively,
  such that there exists an isomorphism $\Psi$ making the following diagram commutative:
  \begin{equation*}
    \begin{CD}
      Bx^\Pi@>>>B[x]@>\phi^\prime>>Bx^\Pi\\
      @V\Psi V\cong V @V{\rm id}VV @V\Psi V\cong V\\
      Bx^\Delta@>>>B[x]@>\phi>>Bx^\Delta\,.
    \end{CD}
  \end{equation*}
  For all $\alpha\in\Pi-\Delta$, consider the elements $f_{\alpha}\in\ker\phi$ 
  of equation \eqref{border}. 
  From the commutative diagram above it follows that 
  \begin{equation}\label{psi}
    \Psi(x^\alpha)=
    \begin{cases}
      \,\,\,x^\alpha&\text{ if }\alpha\in\Pi\cap\Delta\,,\\
      -\sum_{\beta\in\Delta}d_{\alpha,\beta}x^\beta&\text{ if }\alpha\in\Pi-\Delta\,.
    \end{cases}
  \end{equation}
  Indeed, the first line is immediate; as for the second line, if $\alpha\in\Pi-\Delta$, 
  then $\phi(x^\alpha+\sum_{\beta\in\Delta}d_{\alpha,\beta}x^\beta)=0$, i.e.
  \begin{equation*}
    \Psi(x^\alpha)=\Psi(\phi^\prime(x^\alpha))=\phi(x^\alpha)
    =-\sum_{\beta\in\Delta}d_{\alpha,\beta}\phi(x^\beta)
    =-\sum_{\beta\in\Delta}d_{\alpha,\beta}x^\beta\,.
  \end{equation*}
  As for the inverse of $\Psi$, we define the polynomials 
  \begin{equation*}
    g_{\alpha}=x^\alpha+\sum_{\beta\in\Pi}e_{\alpha,\beta}x^\beta\in\ker\phi^\prime
  \end{equation*}
  in analogy to \eqref{border} and obtain
  \begin{equation*}
    \Psi^{-1}(x^\alpha)=
    \begin{cases}
      \,\,\,x^\alpha&\text{ if }\alpha\in\Pi\cap\Delta\,,\\
      -\sum_{\beta\in\Delta}e_{\alpha,\beta}x^\beta&\text{ if }\alpha\in\Delta-\Pi\,.
    \end{cases}
  \end{equation*}
  The given homomorphism lies in both $\Hi^{\Delta}_{S/k}$ and $\Hi^{\Pi}_{S/k}$ if, and only if, 
  the linear map \eqref{psi} is invertible. 
  We see that this condition is equivalent to the matrix 
  \begin{equation*}
    (d_{\alpha,\beta})_{\alpha\in\Pi-\Delta,\,\beta\in\Delta-\Pi}
  \end{equation*}
  being invertible. 
\end{proof}

Note that this implies that $\Hi^{\Delta}_{S/k}\cap\Hi^{\Pi}_{S/k}$ is the open locus where the matrices
\begin{equation*}
  T^{\Box}=\left(\begin{array}{cc}
  E&0\\
  T_{31}&T_{32}
  \end{array}\right)\,\text{ and }\,,
  U^{\Box}=\left(\begin{array}{cc}
  E&0\\
  U_{21}&U_{22}
  \end{array}\right)
\end{equation*}
are inverse to each other. 
This is also proved in Section 2.2 of \cite{huibregtse}. 

Consider the matrix of indeterminates $T=(T_{\alpha,\beta})$ from the above discussion. 
The rows and columns of that matrix are indexed by elements of $\mathbb{N}^n$, 
which are ordered by the term order $\prec$. 
By Corollary \ref{prime}, $\Hi^{\prec\Delta}_{S/k}$ is the closed subscheme of ${\rm Spec}\,R^\Delta$
on which $T$ is a {\it lower triangular matrix w.r.t. $\prec$}. 
We obtain:

\begin{cor}
  The intersection $\Hi^{\prec\Delta}_{S/k}\cap\Hi^{\Pi}_{S/k}$ is the 
  locally closed subscheme of $\Hi^{\Pi}_{S/k}$ in which 
  \begin{itemize}
    \item $U^{\Box}$ is the inverse of $T^{\Box}$, and
    \item $T$ is a lower triangular matrix w.r.t. $\prec$. 
  \end{itemize}
\end{cor}

It is thus tempting to suspect that the boundary in $\Hi^{\Pi}_{S/k}$ of 
$\Hi^{\prec\Delta}_{S/k}\cap\Hi^{\Pi}_{S/k}$ is just 
$\Hi^{\prec\Pi}_{S/k}$. 
Indeed, ``the inverse of a lower triangular matrix is a lower triangular matrix'', 
hence from $T^\Box U^\Box=E$, we might deduce that $U^\Box$ is lower triangular. 
Moreover, ``the product of two lower triangular matrices is lower triangular'', 
hence $U$, which by Proposition \ref{prointersection} can be written as $U=TU^\Box$, must be lower triangular. 

However, the first quoted assertion only holds for square matrices in which rows and columns are
indexed by the same totally ordered set. 
This is not the case for $T^\Box$ and $U^\Box$, whose rows and columns are indexed by $\Delta$ and $\Pi$, resp. 
(The second quoted assertion is true for all products $AB$ of matrices $A$ and $B$ 
indexed by subsets of a totally ordered set such that the indexing set of the columns of $A$ 
equals the indexing set of the rows of $B$.)
Moreover, if $T$ and $U$ were both lower triangular in a point $\mathfrak{p}\in{\rm Spec}\,k$, 
that point would lie in both $\Hi^{\prec\Delta}_{S/k}$ and $\Hi^{\prec\Pi}_{S/k}$, 
in contradiction to Theorem \ref{coprod}. 
The most we can hope for is that 
\begin{equation}\label{partial}
  \partial(\Hi^{\prec\Delta}_{S/k}\cap\Hi^{\Pi}_{S/k})=\Hi^{\prec\Pi}_{S/k}\,, 
\end{equation}
as indicated above. 
If this was true for all $\Delta$ and $\Pi$, 
the decomposition of Theorem \ref{coprod} would be a {\it stratification}. 
Indeed, denote by $\mathcal{S}$ the set of all standard sets of size $d$.
Then the decomposition of Theorem \ref{coprod} is a stratification if, and only if, 
for all $\Delta\in\mathcal{S}$, there exists a subset $\mathcal{S}(\Delta)\subset\mathcal{S}$ such that
\begin{equation}\label{closure}
  \overline{\Hi^{\prec\Delta}_{S/k}}=\coprod_{\Pi\in\mathcal{S}(\Delta)}\Hi^{\prec\Pi}_{S/k}\,.
\end{equation}
If this is true, then 
\begin{equation*}
  \begin{split}
    \mathcal{S}(\Delta)&=\{\Pi\in\mathcal{S};
    \Hi^{\prec\Delta}_{S/k}\cap\Hi^{\Pi}_{S/k}\neq\emptyset\}\\
    &=\{\Pi\in\mathcal{S};
    \overline{\Hi^{\prec\Delta}_{S/k}}\cap\Hi^{\Pi}_{S/k}\neq\emptyset\}\\
    &=\{\Pi\in\mathcal{S};
    \overline{\Hi^{\prec\Delta}_{S/k}}\cap\Hi^{\prec\Pi}_{S/k}\neq\emptyset\}\,.
  \end{split}
\end{equation*}
Therefore it is clear that \eqref{partial} holds for all $\Delta,\Pi\in\mathcal{S}$ if, and only if, 
for all $\Delta\in\mathcal{S}$ there exists an $\mathcal{S}(\Delta)\subset\mathcal{S}$ 
such that \eqref{closure} holds. 
Here is a negative result on the question wether or not we have a stratification here. 

\begin{pro}
  If $\prec$ is the lexicographic order on $S$ and $\dim\Hi^{d}_{S/k}>dn$, 
  then the decomposition of Theorem \ref{coprod} is not a stratification. 
  In particular, if $n=3$ and $d\geq102$ or $n=4$ and $d\geq25$, 
  the decomposition of Theorem \ref{coprod} is not a stratification. 
\end{pro}

\begin{proof}
  We order the variables such that $x_{1}\succ\ldots\succ x_{n}$ and consider the standard set
  $\Delta=\{0,\ldots,(r-1)e_{n}\}\subset\mathbb{N}^n$. 
  Remember that by Example \ref{axis}, $\Hi^\Delta_{S/k}$ has dimension $dn$. 
  Furthermore, $\Hi^\Delta_{S/k}=\Hi^{\prec\Delta}_{S/k}$, 
  as $\alpha\succ\beta$ for all $\alpha\in\mathbb{N}^n-\Delta$ and all $\beta\in\Delta$ in the lexicographic order. 
  We claim that 
  \begin{equation}\label{max}
    \forall\Pi\in\mathcal{S}:\overline{\Hi^{\prec\Delta}_{S/k}}\cap\Hi^{\prec\Pi}_{S/k}\neq\emptyset\,.
  \end{equation}
  For proving that, we use a few schemes introduced and discussed in Section 5 of \cite{components}. 
  The first is
  \begin{equation*}
    \Hi^{d,0}_{S/k}=((\mathbb{A}^n_{k})^d-\Lambda)/S_{r}\,,
  \end{equation*}
  where for $i=1,\ldots,r$, we denote by $(x^{(i)}_{1},\ldots,x^{(i)}_{n})$
  the coordinates on the $i$-th copy of $\mathbb{A}^n$ in the product;
  where $\Lambda=\cup_{i\neq j}\mathbb{V}(x^{(i)}_{1}-x^{(j)}_{1},\ldots,x^{(i)}_{n}-x^{(j)}_{n})$
  is the {\it large diagonal} in the product; 
  and where $S_{r}$ is the symmetric group acting on the product in the obvious way. 
  $\Hi^{d,0}_{S/k}$ is an open subscheme of $\Hi^{d}_{S/k}$, 
  and the functor associated to the scheme $\Hi^{d,0}_{S/k}$ is given by 
  \begin{equation*}
    \HHi^{d,0}_{S/k}(B)=  
    \left\lbrace
    \begin{array}{c}
      \text{ closed subschemes }Z\subset\mathbb{A}^n_{B}\text{ such that }\\
      p:Z\to{\rm Spec}\,B\text{ is finite \'{e}tale of degree $d$}
    \end{array}
  \right\rbrace\,,
  \end{equation*}
  where $p$ is the restriction to $Z$ of the projection $\mathbb{A}^n_{B}\to{\rm Spec}\,B$. 
  (In contrast to that, the functor $\HHi^{d}_{S/k}$ sends a $k$-algebra $B$
  to the set of all closed subschemes $Z\subset\mathbb{A}^n_{B}$ 
  such that the restriction of the projection $p=\mathbb{A}^n_{B}\to{\rm Spec}\,B$ is finite flat of degree $d$.
  Finite flatness of $p:Z={\rm Spec}\,B[x]/I\to{\rm Spec}\,B$ translates to local freeness of the $B$-algebra $B[x]/I$.
  Therefore, the additional requirement in $\HHi^{d,0}_{S/k}$ is {\it unramifiedness}.)
  Moreover, we use the subscheme
  \begin{equation*}
    \Hi^{\prec\Delta,0}_{S/k}=\Hi^{\Delta}_{S/k}\cap\Hi^{d,0}_{S/k}
  \end{equation*}
  of $\Hi^{d,0}_{S/k}$ and its analogue for $\Pi$ instead of $\Delta$. 
  The functor associated to that scheme sends a $k$-algebra $B$ 
  to the set of all \'{e}tale $p:Z={\rm Spec}\,B[x]/I\to{\rm Spec}\,B$
  such that $I\subset B[x]$ is monic with standard set $\Delta$. 
  (As $\Hi^{\Delta}_{S/k}=\Hi^{\prec\Delta}_{S/k}$, the functor equivalently sends a $k$-algebra $B$ 
  to the set of all \'{e}tale $p:Z={\rm Spec}\,B[x]/I\to{\rm Spec}\,B$ such that $B[x]/I$ is free with basis $x^\Delta$.) 
  
  We fix a homomorphism from $k$ to a field $k^\prime$ having at least $d$ elements
  and a bijection $\{0,\ldots,d-1\}\to B$, where $B\subset k^\prime$ has $d$ elements. 
  That induces a bijection $\{0,\ldots,d-1\}^n\to B^n$, whose restriction to $\Pi$ induces a bijection 
  $\Pi\to C$, where $C\subset B^n\subset(k^\prime)^n$ has $d$ elements. 
  Let $I\subset k^\prime[x]$ be the ideal defining $C$. 
  Then Corollary 10 of \cite{jpaa} says that the ideal $I$ is monic with standard set $\Pi$. 
  Therefore $\xi=k^\prime[x]/I$ is a $k^\prime$-rational closed point of $\Hi^{\prec\Pi}_{S/k}$. 
  We shall prove that $\xi$ lies in the closure in $\Hi^{d,0}_{S/k}$ of $\Hi^{\prec\Delta,0}_{S/k}$. 
  Then $\xi$ will also lie in the closure in $\Hi^{d}_{S/k}$ of $\Hi^{\prec\Delta}_{S/k}$, 
  and \eqref{max} will be proved. 
  
  For this we denote by
  \begin{equation*}
    \pi:(\mathbb{A}^n_{k})^d-\Lambda\to\Hi^{d,0}_{S/k}
  \end{equation*}
  the canonical morphism. As $\Hi^{\prec\Delta}_{S/k}=\Hi^{\Delta}_{S/k}$, 
  and therefore $\Hi^{\prec\Delta,0}_{S/k}$ is an open subscheme of $\Hi^{d,0}_{S/k}$, 
  it follows that $\pi^{-1}(\Hi^{\Delta,0}_{S/k})=(\mathbb{A}^n_{k})^d-\Lambda-A$
  for some closed $A\subset(\mathbb{A}^n_{k})^d$. 
  (In fact, $A$ is the scheme associated to the ideal $(x_{n}^{i}-x_{n}^{j};i\neq j)$, but we will not need that here.)
  Remember that we have to show that for all open $U\subset\Hi^{d,0}_{S/k}$ such that $\xi\in U$, 
  we have $U\cap\Hi^{\prec\Delta,0}_{S/k}\neq\emptyset$. 
  Let $\eta$ be an element of $\pi^{-1}(\xi)$. 
  (The choice of $\eta$ corresponds to the choice of a labeling of the elements of $E$.)
  It clearly suffices to show that for all open $V\subset(\mathbb{A}^n_{k})^d-\Lambda$ 
  such that $\eta\in V$, we have $V\cap((\mathbb{A}^n_{k})^d-\Lambda-A)\neq\emptyset$. 
  Upon writing $V=(\mathbb{A}^n_{k})^d-\Lambda-A^\prime$, 
  for some closed $A^\prime\subset(\mathbb{A}^n_{k})^d$, 
  it follows that $V\cap((\mathbb{A}^n_{k})^d-\Lambda-A)=(\mathbb{A}^n_{k})^d-\Lambda-A-A^\prime$. 
  That intersection is empty if, and only if, $\Lambda\cup A\cup A^\prime=(\mathbb{A}^n_{k})^d$. 
  But this is impossible as all three summands are closed subschemes of $(\mathbb{A}^n_{k})^d$
  and strictly smaller than the ambient scheme. 
  
  Now that \eqref{max} is proved, we conclude as follows:
  If the decomposition in question was a stratification, \eqref{max} would say that
  $\mathcal{S}(\Delta)=\mathcal{S}$. 
  Moreover, each $\Hi^{\prec\Pi}_{S/k}$, being a subscheme of the closure of $\Hi^{\prec\Delta}_{S/k}$, 
  would have a dimension at most $dn$. 
  Therefore by Theorem \ref{coprod} and \eqref{closure}, the dimension of 
  \begin{equation*}
    \Hi^d_{S/k}=\coprod_{\Pi\in\mathcal{S}}\Hi^{\prec\Pi}_{S/k}
    =\coprod_{\Pi\in\mathcal{S}(\Delta)}\Hi^{\prec\Pi}_{S/k}
    =\overline{\Hi^{\prec\Delta}_{S/k}}\,.
  \end{equation*}
  would be $dn$. 
  This proves the first assertion of the proposition. 
  As for the second assertion, we know from (1) of \cite{iarrobino} that $\dim\Hi^d_{S/k}>dn$
  if $n=3$ and $d\geq102$ or $n=4$ and $d\geq25$. 
\end{proof}

The last proposition raises some interesting questions. 
The {\it good component} $\mathscr{G}^d_{S/k}$ of $\Hi^{d}_{S/k}$ is the schematic closure of the subscheme $\Hi^{d,0}_{S/k}$. 
(As for the construction of $\mathscr{G}^d_{S/k}$, see \cite{ekedahlskjelnes} or \cite{rydhskjelnes}.)
In particular, $\mathscr{G}^d_{S/k}$ is of dimension $dn$, thus the argument we used in the last proof does not work 
if we replace the full Hilbert scheme by the good component. 

\begin{q}
  Is the decomposition 
  \begin{equation*}
    \mathscr{G}^d_{S/k}=\coprod_{\Delta}(\mathscr{G}^d_{S/k}\cap\Hi^{\prec\Delta}_{S/k})\,,
  \end{equation*}
  induced by the decomposition of Theorem \ref{coprod}, a stratification?
\end{q}


\section{Acknowledgements}

I wish to thank my colleagues Michael Spie{\ss}, Thomas Zink, Eike Lau and Vytautas Paskunas 
at the University of Bielefeld for being there for me and my questions at any time. 
I particularly thank Mike Stillman for his great support and warm welcome when I moved from Bielefeld to Cornell University, 
for showing great interest in the equations of Theorem \ref{fewer}, 
and for experimenting with these equations in {Macaulay2} \cite{M2}.
Many thanks go to Elmar Gro{\ss}e Kl\"onne at Humboldt Universt\"at Berlin 
for giving me the opportunity to talk about this piece of work while it was still under construction, 
and to Roy Skjelnes, Lorenzo Robbiano and Paolo Lella 
for a number of valuable suggestions on a first draft of the present paper.

\bibliography{strata.bib}
\bibliographystyle{amsalpha}

\end{document}